\numberwithin{equation}{section}
\numberwithin{figure}{section}
\theoremstyle{plain}
\newtheorem{thm}{\protect\theoremname}[section]
\theoremstyle{remark}
\newtheorem{rem}{\protect\remarkname}[section]
\theoremstyle{plain}
\newtheorem{prop}{\protect\propositionname}[section]
  \newenvironment{proof}[1][\proofname]{\par
    \normalfont\topsep6\p@\@plus6\p@\relax
    \trivlist
    \itemindent\parindent
    \item[\hskip\labelsep
          \scshape
      #1]\ignorespaces
  }{%
    \endtrivlist\@endpefalse
  }
  \providecommand{\proofname}{Proof}
\theoremstyle{plain}
\newtheorem{lem}{\protect\lemmaname}[section]
\newcommand{\indicator}[1]{\mathbbm{1}_{#1}}
\providecommand{\lemmaname}{Lemma}
\providecommand{\propositionname}{Proposition}
\providecommand{\remarkname}{Remark}
\providecommand{\theoremname}{Theorem}
\begin{document}

\title{Optimal cooling of an internally heated disc}

\author{
Ian Tobasco}

\address{Department of Mathematics, Statistics, and Computer Science, University of Illinois at Chicago, Chicago, IL 60607, USA
}




\begin{abstract}
Motivated by the search for sharp bounds on turbulent heat transfer
as well as the design of optimal heat exchangers, we consider incompressible
flows that most efficiently cool an internally heated disc. Heat enters
via a distributed source, is passively advected and diffused, and
exits through the boundary at a fixed temperature. We seek an advecting
flow to optimize this exchange. Previous work on energy-constrained
cooling with a constant source has conjectured that global optimizers
should resemble convection rolls; we prove one-sided bounds on energy-constrained
cooling corresponding to, but not resolving, this conjecture. In the
case of an enstrophy constraint, our results are more complete: we
construct a family of self-similar, tree-like ``branching flows''
whose cooling we prove is within a logarithm of globally optimal. 
These results hold for general space- and time-dependent source-sink distributions that add more heat than they remove.
Our main technical tool is a non-local Dirichlet-like variational principle for bounding solutions of the
inhomogeneous advection-diffusion equation with a divergence-free velocity.
\end{abstract}



\dedicatory{To Charlie Doering, who taught us how to bound.}
\maketitle

\section{Introduction}

Imagine a fluid in a container that is heated from within, and whose
temperature is fixed at its boundary. How should the fluid flow so
that the container cools as quickly as possible? This question arises,
for instance, in the design of optimal heat exchangers, whose complicated
shapes and flows facilitate heat transfer at rates far beyond
diffusion \cite{alam2018comprehensive,alben2017improved,alben2017optimal,catton2011conjugate,feppon2019shape,feppon2020topology,feppon2021body,geb2013measuring,rastan2020heat,weisberg1992analysis}.
More generally, the problem is related to the ongoing search for sharp\emph{
}bounds on turbulent heat transfer in a variety of settings, including
internally heated \cite{arslan2021bounds,arslan2021bounds2,bouillaut2019,goluskin2015internally,goluskin2016internally,lepot2018radiative}
as well as buoyancy-driven convection \cite{drivas2021bounds,fantuzzi2018bounds,goluskin2016bounds,toppaladoddi2021thermal,whitehead2011ultimate},
amongst others. While any bound must explain why the rate of heat
transfer associated to a given flow cannot exceed a certain amount,
sharp bounds are distinguished in that their values are attained by
particular flows\textemdash flows that, as a result, are extremal.
Finding sharp bounds on convective heat transfer, or at least bounds
that are asymptotically sharp in their scaling with respect to physical parameters, has remained
a widely open problem in our understanding of turbulence since the
pioneering works of Howard and Busse \cite{busse1969howards,howard1963heat}
and Doering and Constantin \cite{doering1996variational} on Rayleigh-B\'enard
convection (for a recent summary of the state of the art, see \cite{doering2020turning}).

A widely used technique for proving bounds on fluid dynamical quantities
is the ``background method'' introduced by Doering and Constantin
\cite{constantin1995variational,doering1994variational,doering1996variational},
which takes the form of a convex variational problem whose optimal
value is the bound. Whether or not this method and its descendants
(see, e.g., \cite{kerswell1998unification,plasting2003improved} or \cite{fantuzzi2021background} for a recent review) produce
sharp bounds on convection remains largely unclear. This has led some to
ask whether information beyond the background method might be
used, e.g., via the non-quadratic auxiliary functionals of \cite{chernyshenko2014polynomial,olson2021heat,rosa2021optimal,tobasco2018optimal},
the conjecture that global optimizers of certain chaotic evolutions
are steady \cite{goluskin2019bounds}, or the maximum principle which
is known to improve some bounds \cite{nobili2017limitations,otto2011rayleigh}.
Recently, progress with internally heated convection has led to asymptotically sharp bounds on heat transfer between a pair of steady and perfectly balanced sources and sinks \cite{miquel2019convection}. Due to the balanced nature of the source-sink distribution, which puts in just as much heat as it takes away, the overall rate of heat transfer in the bound turns out to be much larger than the usual ``ultimate scaling'' law; it is nonetheless asymptotically sharp and is saturated by a bulk, convection roll-like flow.

Motivated by these questions, we consider the design of incompressible flows to optimally cool an internally heated disc. We treat a general space- and time-dependent source-sink distribution, under an assumption that more heat is added than is removed. This ensures that some heat must make its way from the bulk to the boundary, and turns out to drive the formation of fine flow microstructure upon optimization (as shown in the upcoming \prettyref{fig:flows}).
Although temperature will be governed in our setup by the usual advection-diffusion equation,
we will not impose a momentum equation directly on the velocity, but
will instead replace it with a constraint on the mean enstrophy it
implies. Optimizing with respect to this constraint leads to a self-similar,
tree-like ``branching flow'' whose cooling we prove is within a
logarithm of globally optimal. While the \emph{a priori} bound part of this result can be shown using the background method,
we prefer a different approach that highlights the variational structure of the advection-diffusion equation. 
We also comment on the related energy-constrained problem, for which considerably less is known.

Setting up these problems in detail, we let $D\subset\mathbb{R}^{2}$
be a disc with radius $R>0$ and consider a temperature field $T(\mathbf{x},t)$
that is passively advected and diffused according to a divergence-free
velocity field $\mathbf{u}(\mathbf{x},t)$ we take to be under our
control. At the disc's boundary $\partial D$ we set the temperature
to be a constant, say $T=0$, and let $f(\mathbf{x},t)$ be a given source-sink distribution. Altogether, $T$ solves the inhomogeneous advection-diffusion
equation
\begin{equation}
\begin{cases}
\partial_{t}T+\mathbf{u}\cdot\nabla T=\kappa\Delta T+f & \text{in }D\\
T=0 & \text{at }\partial D
\end{cases}\label{eq:adv-diff}
\end{equation}
where $\kappa>0$ is the thermal diffusivity. We leave the value of
$T$ at the initial time $t=0$ unspecified, as in the long run it
is lost to diffusion. To measure cooling efficiency, we use the mean-square
temperature gradient\footnote{All limits are understood in the limit superior sense to ensure they
are well-defined. } 
\[
\left\langle |\nabla T|^{2}\right\rangle =\lim_{\tau\to\infty}\,\frac{1}{\tau|D|}\int_{0}^{\tau}\int_{D}|\nabla T(\mathbf{x},t)|^{2}\,d\mathbf{x}dt
\]
whose value depends on $\mathbf{u}$. The notation $|D|$ stands for
the area of the disc. Besides the divergence-free constraint
\[
\nabla\cdot\mathbf{u}=0\quad\text{in }D,
\]
we enforce the no-penetration boundary conditions 
\[
\mathbf{u}\cdot\hat{\mathbf{n}}=0\quad\text{at }\partial D
\]
where $\hat{\mathbf{n}}$ is the outwards pointing unit normal. 

Given this setup, we minimize $\left\langle |\nabla T|^{2}\right\rangle $
amongst all velocities $\mathbf{u}$ with a given value of mean enstrophy
$\left\langle |\nabla\mathbf{u}|^{2}\right\rangle $, or mean kinetic
energy $\left\langle |\mathbf{u}|^{2}\right\rangle $. These ``enstrophy-''
and ``energy-constrained optimal cooling problems'' were originally
posed in \cite{marcotte2018optimal} and analyzed further in \cite{iyer2021bounds}.
Actually, those papers focused on the special case of a constant heat
source, e.g., $f=1$, and were also mostly concerned with the energy-constrained
problem. Here we treat a general source function $f(\mathbf{x},t)$ which may
vary both in space and in time, and may even be negative in some places
and at some times allowing for temporary sinks. Before coming to our
results, we pause to discuss other possible objective functionals
as well as the practical meaning of ``optimizing over $\mathbf{u}$''. 

Various minimization problems have been proposed to optimize cooling.
In \cite{marcotte2018optimal}, the authors minimize the mean temperature
$\left\langle T\right\rangle $ subject to an energy-constraint and for 
a constant positive source. In \cite{iyer2021bounds}, the same steady minimization 
is studied alongside a variety of others involving $L^{p}$-based
quantities, namely, $\left\langle T^{p}\right\rangle$ for $1\leq p<\infty$ and
$\max\,T$ for $p=\infty$. Multiplying the advection-diffusion equation
\prettyref{eq:adv-diff} by $T$ and integrating by parts shows that
\[
\kappa\left\langle |\nabla T|^{2}\right\rangle =\left\langle fT\right\rangle
\]
in general. If $f$ is constant and positive, minimizing $\left\langle T\right\rangle $ is
the same as minimizing $\left\langle |\nabla T|^{2}\right\rangle $.
If $f$ is bounded and uniformly positive, meaning
that $C\geq f\geq c$ for some fixed $c,C\in(0,\infty)$, then 
\[
c\left\langle T\right\rangle \leq\kappa\left\langle |\nabla T|^{2}\right\rangle \leq C\left\langle T\right\rangle 
\]
so that the minimizations give comparable results. For a general and possibly sign-indefinite $f$, 
such a simple relationship need not hold. 
Our choice to minimize $\left\langle |\nabla T|^{2}\right\rangle$ as opposed to, say, $\left\langle |T|^{p}\right\rangle$ 
is partially out of mathematical convenience, but also because the former provides a
more direct assessment of long-term diffusive transport\textemdash the
rate-limiting step in any heat exchange. 

Turning to our choice to treat the velocity directly as a control,
perhaps it is more reasonable from the
viewpoint of applications to
think of controlling an applied force $\mathbf{f}(\mathbf{x},t)$.
One may then seek to optimize cooling subject to a constraint on the
power used. Or, one might limit the complexity of the force, e.g.,
by constraining $\left\langle |\nabla\mathbf{f}|^{2}\right\rangle $.
In any case, the velocity $\mathbf{u}$ will
solve the forced, incompressible Navier-Stokes equations
\begin{equation}
\partial_{t}\mathbf{u}+\mathbf{u}\cdot\nabla\mathbf{u}=-\nabla p+\nu\Delta\mathbf{u}+\mathbf{f}\quad\text{in }D\label{eq:Navier-Stokes}
\end{equation}
where $p(\mathbf{x},t)$ and $\nu>0$ are the pressure and fluid viscosity.
Dotting \prettyref{eq:Navier-Stokes} by $\mathbf{u}$, integrating
by parts, and remembering that $\nabla\cdot\mathbf{u}=0$ yields the
fundamental ``balance law''
\[
\nu\left\langle |\nabla\mathbf{u}|^{2}\right\rangle =\left\langle \mathbf{f}\cdot\mathbf{u}\right\rangle .
\]
Thus, a constraint on the mean power $\left\langle \mathbf{f}\cdot\mathbf{u}\right\rangle $
is the same as a constraint on the mean enstrophy $\left\langle |\nabla\mathbf{u}|^{2}\right\rangle $.
Having found an optimal $\mathbf{u}$, we can simply read off from
\prettyref{eq:Navier-Stokes} the corresponding optimal $\mathbf{f}$.
This is not to say that every optimal velocity is dynamically stable\textemdash in
fact, in related problems involving extremal orbits the opposite situation
holds \cite{goluskin2018bounding,lakshmi2020finding}. Nevertheless,
to get started, we set aside the momentum equation and focus solely
on optimizing advection-diffusion. 

Switching to non-dimensional variables, let $\kappa=R=1$. The value
of the mean enstrophy $\left\langle |\nabla\mathbf{u}|^{2}\right\rangle $,
or the mean energy $\left\langle |\mathbf{u}|^{2}\right\rangle $
when we discuss it, will be called the Pecl\'et number $\text{Pe}$ as either
quantity sets the relative strength of advection to diffusion. (In
dimensional variables, $\text{Pe}=UR/\kappa$ with $U$ being the imposed
velocity scale.) In the advective limit $\text{Pe}\to\infty$, one
expects to be able to drive $\left\langle |\nabla T|^{2}\right\rangle \to0$
along a well-chosen sequence of velocities. The question is: at what
optimal rate can this convergence occur? Our main result is a set
of upper and lower bounds on $\min\,\left\langle |\nabla T|^{2}\right\rangle $
subject to the enstrophy constraint $\left\langle |\nabla\mathbf{u}|^{2}\right\rangle =\text{Pe}^{2}$
that identify the optimal cooling rate up to a possible logarithm
in $\text{Pe}$. Let $\lambda_{1}$ be the first Dirichlet eigenvalue of
$-\Delta$ on $D$, and let $t\wedge s$ denote the minimum of $t,s\in\mathbb{R}$.
\begin{thm}
\label{thm:main-theorem}Let $f(\mathbf{x},t)$ satisfy 
\[
\lim_{\tau\to\infty}\,\frac{1}{\sqrt{\tau}}\int_{0}^{\tau}e^{-\lambda_{1}\left((\tau-t)\wedge t\right)}||f(\cdot,t)||_{L^{2}(D)}\,dt=0\quad\text{and}\quad\left\langle |f|^{2}+|\nabla f|^{2}+|\nabla\nabla f|^{2}\right\rangle <\infty
\]
and assume $\left\langle f\right\rangle >0$. There exist positive
constants $C(f)$, $C'(f)$, and $c(f)$ depending only on $f$ such
that
\[
C\frac{1}{\emph{Pe}^{2/3}}\leq\min_{\substack{\mathbf{u}(\mathbf{x},t)\\
\left\langle |\nabla\mathbf{u}|^{2}\right\rangle =\emph{Pe}^{2}\\
\mathbf{u}\cdot\hat{\mathbf{n}}=0\text{ at }\partial D
}
}\,\left\langle |\nabla T|^{2}\right\rangle \leq C'\frac{\log^{4/3}\emph{Pe}}{\emph{Pe}^{2/3}}
\]
whenever $\emph{Pe}\geq c(f)$. 
\end{thm}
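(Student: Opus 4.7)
The plan is to prove the two bounds by independent arguments: an \emph{a priori} duality estimate for the lower bound, and an explicit self-similar branching-flow construction for the upper bound.

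For the lower bound, I would begin from the standard identity $\kappa\langle|\nabla T|^{2}\rangle=\langle fT\rangle$, obtained by testing \eqref{eq:adv-diff} with $T$ and using $\nabla\cdot\mathbf{u}=0$ together with $T|_{\partial D}=0$. To convert this into a lower bound on $\langle|\nabla T|^{2}\rangle$, I would also test the equation against an auxiliary function $\phi(\mathbf{x},t)$ vanishing on $\partial D$:
\begin{equation*}
\langle f\phi\rangle \;=\; -\langle T\,\partial_{t}\phi\rangle \;-\; \langle T\,\mathbf{u}\cdot\nabla\phi\rangle \;+\; \kappa\langle\nabla T\cdot\nabla\phi\rangle
\end{equation*}
(the boundary terms in time being controlled by the decay hypothesis on $f$). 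The heart of the argument is to control the transport term $\langle T\,\mathbf{u}\cdot\nabla\phi\rangle$ by a Dirichlet-like norm $\|\phi\|_{\mathbf{u}}$ defined by dualizing the operator $\mathbf{u}\cdot\nabla$ on the space of mean-zero functions; this non-local, velocity-dependent norm is what the abstract advertises as the ``non-local Dirichlet-like variational principle.'' Once in hand, Cauchy--Schwarz yields a bound of the form $\langle f\phi\rangle\lesssim \langle|\nabla T|^{2}\rangle^{1/2}(\|\nabla\phi\|_{2}+\|\phi\|_{\mathbf{u}})$, and optimizing over $\phi$ (subject, say, to $\langle f\phi\rangle=1$) gives a lower bound on $\langle|\nabla T|^{2}\rangle$. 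A radial, boundary-layer profile for $\phi$ balancing $\|\nabla\phi\|_{2}^{2}$ against $\|\phi\|_{\mathbf{u}}^{2}$ and feeding in the enstrophy constraint $\langle|\nabla\mathbf{u}|^{2}\rangle=\mathrm{Pe}^{2}$ should produce the advertised $\mathrm{Pe}^{-2/3}$ scaling.

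For the upper bound, the plan is to design and analyze a self-similar, tree-like branching flow. The construction partitions the disc into a hierarchy of cells of radial scale $\ell_{k}\sim 2^{-k}$ that refine toward $\partial D$, with the angular cell count at level $k$ growing (say, doubling) so that each cell has bounded aspect ratio. On each cell, one places a pair of counter-rotating convection rolls of amplitude $U_{k}$ which hand heat outward to the next, finer level; at the innermost scale $k=K$, heat is finally removed by diffusion through a thin Prandtl-like layer of thickness $\sim\ell_{K}$. Requiring diffusion at the innermost scale to balance the cumulative advective flux fixes $U_{K}\ell_{K}\sim 1$, while the enstrophy budget $\sum_{k}U_{k}^{2}/\ell_{k}^{2}\cdot|\text{cell}_{k}|\lesssim\mathrm{Pe}^{2}$ together with optimization over the profile $\{\ell_{k},U_{k}\}$ determines both the number of scales $K\sim\log\mathrm{Pe}$ and the amplitudes. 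Computing $\langle|\nabla T|^{2}\rangle$ for the resulting flow---most efficiently by re-applying the variational principle above with a background temperature adapted to the branching geometry---should yield the bound $C'\log^{4/3}\mathrm{Pe}/\mathrm{Pe}^{2/3}$, the logarithmic factor being a direct imprint of the $K\sim\log\mathrm{Pe}$ scales.

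The main obstacle, as in related branching problems from ferromagnetism, type-I superconductivity, and thin elastic films, will be the upper-bound construction. Producing a genuinely divergence-free velocity with tangential boundary conditions at every level, gluing cells across scales without spoiling the enstrophy budget, and identifying the exact optimal distribution $\{\ell_{k},U_{k}\}$ (which fixes the power $4/3$ on the logarithm, as opposed to, say, $1$ or $2$) are the delicate points. By contrast, the lower bound---while technically substantial---is comparatively clean once the variational principle is in place: the challenge there is essentially to dualize the transport operator $\mathbf{u}\cdot\nabla$ in a way that genuinely exploits $\nabla\cdot\mathbf{u}=0$ and produces a sharp Dirichlet--transport interpolation norm.
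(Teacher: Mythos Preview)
Your overall strategy matches the paper's, but the lower-bound sketch has a genuine gap. You write that the key is to dualize $\mathbf{u}\cdot\nabla$ ``in a way that genuinely exploits $\nabla\cdot\mathbf{u}=0$,'' but incompressibility alone is not what produces the $\mathrm{Pe}^{-2/3}$ scaling. With your radial boundary-layer profile $\phi=\chi_{\delta}(r)$ and the natural choice $\|\phi\|_{\mathbf{u}}=\|\nabla\Delta^{-1}\nabla\cdot(\mathbf{u}\phi)\|_{L^{2}}$, incompressibility lets you integrate by parts to localize the dual pairing to the annulus $\{1-\delta<r<1\}$, but the crude bound that follows is only $\|\phi\|_{\mathbf{u}}^{2}\lesssim\langle|\mathbf{u}|^{2}\rangle$; balancing against $\|\nabla\phi\|_{2}^{2}\sim 1/\delta$ then yields only $\langle|\nabla T|^{2}\rangle\gtrsim\mathrm{Pe}^{-2}$, which is the energy-constrained lower bound, not the enstrophy one. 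The missing ingredient is the no-penetration condition $\mathbf{u}\cdot\hat{\mathbf{n}}|_{\partial D}=0$: since \emph{both} $u_{r}=\mathbf{u}\cdot\hat{\mathbf{e}}_{r}$ and the dual test function $\theta\in H^{1}_{0}(D)$ vanish at $r=1$, one obtains the pointwise estimate $|\overline{\theta u_{r}}(r)|\lesssim(1-r)\|\nabla\theta\|_{L^{2}}\|\nabla\mathbf{u}\|_{L^{2}}$, each vanishing trace contributing a factor $(1-r)^{1/2}$ via Cauchy--Schwarz. This gives $\|\phi\|_{\mathbf{u}}^{2}\lesssim\delta^{2}\langle|\nabla\mathbf{u}|^{2}\rangle=\delta^{2}\mathrm{Pe}^{2}$, and now balancing $1/\delta$ against $\delta^{2}\mathrm{Pe}^{2}$ gives $\delta\sim\mathrm{Pe}^{-2/3}$ and the correct lower bound. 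Without this double-vanishing argument there is no mechanism by which the enstrophy, as opposed to the energy, enters.

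Your upper-bound plan is essentially the paper's: dyadic azimuthal scales $l_{k}=l_{\text{bulk}}/2^{k-1}$, placed at radii with $l_{k}\sim\mathrm{Pe}^{-1/3}\log^{1/6}\mathrm{Pe}\,\sqrt{1-r_{k}}$ and glued by a radial partition of unity $\{\chi_{k}(r)\}$. The paper estimates $\langle|\nabla T|^{2}\rangle$ not by solving for $T$ but via the upper half of the same variational principle, inserting an explicit test function $\eta$ built from the same azimuthal modes as $\mathbf{u}$ and bounding $\langle|\nabla\Delta^{-1}(\mathbf{u}\cdot\nabla\eta-f)|^{2}\rangle$ through a polar decomposition into a radial flux mismatch plus an angular quadratic form. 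The $\log^{4/3}$ then drops out of optimizing the continuous scale function $\ell(r)$, exactly as you anticipate.
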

\begin{rem}
The first assumption concerns the possibility that $f$ grows as $t\to\infty$,
and limits that growth such that $\left\langle |\nabla T|^{2}\right\rangle $
remains finite. Without it, one can make $\left\langle |\nabla T|^{2}\right\rangle =\infty$
for all $\mathbf{u}$, e.g., with $f=a(t)\varphi_{1}(\mathbf{x})$
and where $\varphi_{1}\in H_{0}^{1}(D)$ has $-\Delta\varphi_{1}=\lambda_{1}\varphi_{1}$.
The remaining assumptions on $f$ enter into our bounds on $C$ and
$C'$: we achieve $C\gtrsim\left\langle f\right\rangle ^{2}$ and
$C'\lesssim\left\langle |f|^{2}+|\nabla f|^{2}+|\nabla\nabla f|^{2}\right\rangle $.
For the dependence of $c$, see \prettyref{sec:Variational-bounds}
where we prove the lower bound. 
\end{rem}
\begin{rem}
The same result holds if the no-penetration boundary conditions $\mathbf{u}\cdot\hat{\mathbf{n}}=0$
are replaced by the more restrictive no-slip conditions $\mathbf{u}=\mathbf{0}$
at $\partial D$. This is because we use no-slip velocities for
the upper bound. That the lower bound goes through to the no-slip
case is clear. 
\end{rem}
Let us briefly discuss the strategy of our proof. On the one hand,
we must explain why no enstrophy-constrained velocity having $\left\langle |\nabla\mathbf{u}|^{2}\right\rangle =\text{Pe}^{2}$ can lower $\left\langle |\nabla T|^{2}\right\rangle $
significantly beyond $\text{Pe}^{-2/3}$. At the same time, we must construct
a sequence of admissible velocities to cool within a logarithm of
this bound. The challenge is to find a way
of computing heat transfer that at the same time shows how it can be optimized. We follow the
approach of our previous articles on maximizing transport across an
externally heated fluid layer \cite{tobasco2019optimal,souza2020wall,tobasco2017optimal},
the key difference being the presence of the internal source $f(\mathbf{x},t)$.
We show it is possible to bound $\left\langle |\nabla T|^{2}\right\rangle $,
and in fact to compute it exactly in the steady case, by a pair of
variational problems. These problems remind of Dirichlet's principle
for Poisson's equation, but are non-local due to the mixed character
of the operator $\mathbf{u}\cdot\nabla-\Delta$. That there exists
a non-local Dirichlet-like principle for the effective diffusivity
of a periodic or random fluid flow is well-known in homogenization
\cite{avellaneda1991integral,fannjiang1994convection}. 
Our idea is that $\left\langle |\nabla T|^{2}\right\rangle $ should behave like
an effective diffusivity upon optimization. This is very much in the
spirit of the general connection between homogenization and optimal
design (see, e.g., \cite{kohn1986optimal_i,kohn1986optimal_ii,kohn1986optimal_iii}).

\begin{figure}
\includegraphics[width=\textwidth]{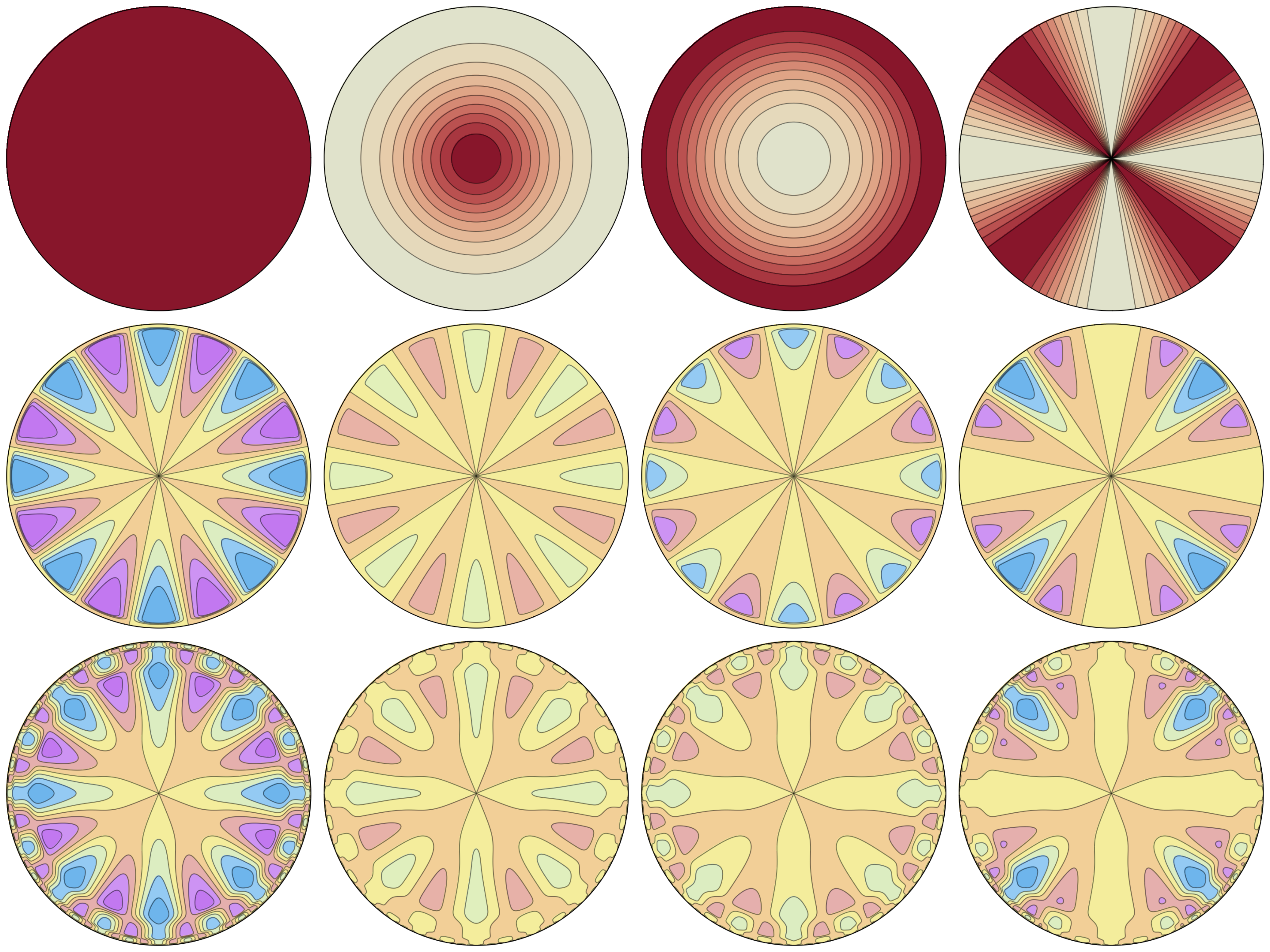}
\caption{Divergence-free flow designs for cooling an internally heated unit disc. The top row shows contours of the source functions $f=1$, $e^{-4r^2}$, $e^{-4(1-r)^2}$, and $\sin^2(2\theta)$ from left to right. The middle row shows streamlines of convection roll-like designs adapted to these sources. The bottom row shows their branching flow counterparts. Counterclockwise/clockwise circulations are colored purple/blue.}\label{fig:flows}
\end{figure}

The bulk of our work goes into constructing the ``branching flows''
behind the upper bound in \prettyref{thm:main-theorem}.  In general,
we envision an unsteady, tree-like, multi-scale flow whose features
refine from the bulk to the boundary as in the bottom of \prettyref{fig:flows}. To achieve it, we piece together
a family of convection roll-like flows, with the number of rolls
in the azimuthal direction $\theta$ depending on the radial coordinate
$r$. A similar, albeit steady, branching flow was used in \cite{tobasco2019optimal,tobasco2017optimal}
to prove nearly sharp bounds on optimal transport through a fluid layer.
Later on, a beautiful and fully three-dimensional branching flow
was found numerically via gradient ascent \cite{motoki2018maximal}.
Each of these may be regarded as a more refined version of Busse's ``multi-$\alpha$''
flows \cite{busse1969howards}, the latter of which do not enforce the
full advection-diffusion equation but instead hinge upon various balance
laws it implies. 

Branching may be anticipated as a mechanism for optimal heat transfer
by asking what it takes to guide Brownian particles from the bulk to the boundary of the disc. 
Imagine a cloud of particles is released
at a radius $r$ and is to be transported outwards across a distance $\sim 1-r$, 
say with (non-dimensional) speed $\sim U$. In time $\tau$, the particles diffuse in the $\theta$-direction
by a typical distance $\sim\sqrt{\tau}$. In the same time, they advect
in the $r$-direction by a distance $\sim U\cdot\tau$. With the goal
of not losing too many particles to poorly directed advection, we
suggest using streamlines whose azimuthal scale $\ell(r)$ matches
that of the Brownian cloud: 
\begin{equation}
\ell(r)\sim\sqrt{\frac{1-r}{U}}.\label{eq:azimuthal-scale}
\end{equation}
Two parameters emerge, namely the speed $U$ and the boundary layer
width $\delta_{\text{bl}}$ where the streamlines finally turn
around. By the enstrophy constraint,
\[
\text{Pe}^{2}=\left\langle |\nabla\mathbf{u}|^{2}\right\rangle \sim\int^{1-\delta_{\text{bl}}}\left(\frac{U}{\ell(r)}\right)^{2}\,rdr\sim U^{3}\log\frac{1}{\delta_{\text{bl}}}.
\]
Thinking of an isotropic roll-based boundary layer, we take $\sqrt{\delta_{\text{bl}}/U}\sim\delta_{\text{bl}}$
giving $U\sim1/\delta_{\text{bl}}$. This determines all parameters
via the scalings
\begin{equation}
U\sim\frac{\text{Pe}^{2/3}}{\log^{1/3}\text{Pe}}\quad\text{and}\quad\delta_{\text{bl}}\sim\frac{\log^{1/3}\text{Pe}}{\text{Pe}^{2/3}}.\label{eq:scalings-set}
\end{equation}

Even if our thought experiment on how advection can effectively ``hug''
diffusion suggests a particular way of branching, it is far from a
proof of \prettyref{thm:main-theorem}, or even of its upper bound.
To achieve it, we apply our variational bounds to estimate $\left\langle |\nabla T|^{2}\right\rangle $
on a general roll-based branching flow design. By optimizing over
all admissible ``scale functions'' $\ell(r)$, we discover the very
same scalings as in \prettyref{eq:azimuthal-scale} and \prettyref{eq:scalings-set}.
In fact, we see no way out of these and the logarithmically corrected
bound they imply: we conjecture that 
\[
\min_{\substack{\mathbf{u}(\mathbf{x},t)\\
\left\langle |\nabla\mathbf{u}|^{2}\right\rangle =\text{Pe}^{2}\\
\mathbf{u}\cdot\hat{\mathbf{n}}=0\text{ at }\partial D
}
}\,\left\langle |\nabla T|^{2}\right\rangle =C(f)\cdot\frac{\log^{4/3}\text{Pe}}{\text{Pe}^{2/3}}+\cdots\quad\text{as }\text{Pe}\to\infty
\]
with the dots representing asymptotically negligible terms. This is
for two-dimensional cooling; the data in \cite{motoki2018maximal}
suggest a pure power law scaling for optimal transport across a three-dimensional
layer. Maybe there is more room for chaperoning particles in three
dimensions versus two? In any case, proving the analog of our conjecture
for a fluid layer would establish a new bound on the longstanding
problem of Rayleigh-B\'enard convection, as noted in \cite{tobasco2019optimal}.
This bound would state that $\text{Nu}\lesssim \text{Ra}^{1/2}/\log^{2}\text{Ra}$ with
$\text{Nu}$ and $\text{Ra}$ being the Nusselt and Rayleigh numbers (the relevant
measures of transport and driving there).

Stochastic analysis of optimal cooling is the focus of \cite{iyer2021bounds}
where the authors estimate, amongst other things, the mean temperature
of steady convection roll-like flows in an internally heated layer with a constant source. 
Using large deviations-based bounds, they prove that $\min\,\left\langle T\right\rangle \leq c(\delta)\frac{\log \text{Pe}}{\text{Pe}^{1-\delta}}$
for any $\delta>0$, where $\left\langle |\mathbf{u}|^{2}\right\rangle =\text{Pe}^{2}$. 
Showing the conjectured optimal scaling 
\[
\min_{\substack{\mathbf{u}(\mathbf{x},t)\\
\left\langle |\mathbf{u}|^{2}\right\rangle =\text{Pe}^{2}\\
\mathbf{u}\cdot\hat{\mathbf{n}}=0\text{ at }\partial D
}
}\,\left\langle |\nabla T|^{2}\right\rangle =C(f)\cdot\frac{1}{\text{Pe}}+\cdots\quad\text{as }\text{Pe}\to\infty
\]
put forth for $f=1$ in \cite{marcotte2018optimal} remains an open challenge, the lower bound part of which seems to require a new idea.  
While the arguments behind \prettyref{thm:main-theorem} lead without major modifications to the supposedly sub-optimal lower bound $\min\,\left\langle |\nabla T|^2\right\rangle \geq C(f)/\text{Pe}^2$, 
they are strong enough to show the conjectured upper bound $\min\,\left\langle |\nabla T|^2\right\rangle \leq C'(f)/\text{Pe}$ and to likewise improve the estimate from \cite{iyer2021bounds}. 
The $\text{Pe}^{-1}$ scaling was found in \cite{marcotte2018optimal} through a matched asymptotic analysis of convection roll-like flows, but without a bound on the error terms. 
See \prettyref{prop:lower-bound-energy-constraint} and \prettyref{prop:upper-bound-energy-constraint}
for our lower and upper bounds on energy-constrained cooling.

This paper is organized as follows. We begin in \prettyref{sec:Variational-bounds}
with our variational bounds on cooling. At the end of that section,
we choose a background-like test function to prove our lower bounds on the enstrophy- and energy-constrained problems, including the lower bound part of  \prettyref{thm:main-theorem}.
The rest of the paper is devoted to upper bounds. 
First, we discuss the steady enstrophy-constrained problem in \prettyref{sec:Steady-branching-flows}
to explain our strategy for finding designs. Then, in \prettyref{sec:Unsteady-branching-flows}
we construct our branching flows to prove the upper bound part of \prettyref{thm:main-theorem}.
We end in \prettyref{sec:Unsteady-roll-type-flows-energy-constrained}
with our upper bound on energy-constrained cooling.

\subsection{Notation\label{subsec:Notation}}

We write $X\lesssim Y$ if $X\leq cY$ for a fixed numerical constant
$c>0$, i.e., one that is independent of all parameters. We write
$X\lesssim_{a}Y$ if $X\leq cY$ where $c=c(a)$, and $X\ll Y$ if
$\frac{X}{Y}\to0$ in a relevant limit. We abbreviate $X\wedge Y=\min\left\{ X,Y\right\} $
and $X\vee Y=\max\left\{ X,Y\right\} $. 

We often conflate a point $\mathbf{x}$ with its polar coordinates
$(r,\theta)$. The unit vectors $\hat{\mathbf{e}}_{r}=\mathbf{x}/|\mathbf{x}|$
and $\hat{\mathbf{e}}_{\theta}=\hat{\mathbf{e}}_{r}^{\perp}$, where
$(\cdot)^{\perp}$ is a counterclockwise rotation by $\pi/2$. Given
a function $\varphi(\mathbf{x})$, its average on the disc $D$ is
\[
\fint_{D}\varphi=\frac{1}{|D|}\int_{D}\varphi(\mathbf{x})\,d\mathbf{x}
\]
where $|D|$ is the disc's area. By $D_{r}$ we mean the concentric
disc of radius $r$. Restricting $\varphi$ to $\partial D_{r}$ gives
a function of $\theta$, whose average and $L^{1}$-norm are
\[
\overline{\varphi}(r)=\frac{1}{2\pi}\int_{0}^{2\pi}\varphi(r,\theta)\,d\theta\quad\text{and}\quad||\varphi||_{L_{\theta}^{1}}(r)=\int_{0}^{2\pi}|\varphi(r,\theta)|\,d\theta.
\]
Given a function $\varphi(\mathbf{x},t)$, we write 
\[
\left\langle \varphi\right\rangle =\limsup_{\tau\to\infty}\,\fint_{0}^{\tau}\fint_{D}\varphi=\limsup_{\tau\to\infty}\,\frac{1}{\tau}\frac{1}{|D|}\int_{0}^{\tau}\int_{D}\varphi(\mathbf{x},t)\,d\mathbf{x}dt
\]
for its (limit superior) space and long-time average.

The Sobolev spaces $L^{2}(D)$ and $H^{1}(D)$ are defined as usual,
using the norms
\[
||\varphi||_{L^{2}(D)}=\sqrt{\int_{D}|\varphi|^{2}\,d\mathbf{x}}\quad\text{and}\quad||\varphi||_{H^{1}(D)}=\sqrt{\int_{D}|\varphi|^{2}+|\nabla\varphi|^{2}\,d\mathbf{x}}.
\]
We write $H^{-1}(D)$ for the dual of $H_{0}^{1}(D)$, the latter
indicating $H^{1}$-functions on $D$ with zero trace at $\partial D$.
It will be convenient to normalize their duality bracket $(\cdot,\cdot)$
by $|D|$, and in particular we take
\[
(\nabla\cdot\mathbf{m},\varphi)=-\fint_{D}\mathbf{m}\cdot\nabla\varphi
\]
for $\mathbf{m}\in L^{2}(D;\mathbb{R}^{2})$ and $\varphi\in H_{0}^{1}(D)$.
We use various mixed spaces such as $L^{2}((0,\infty);H_{0}^{1}(D))$
and its local-in-time version $L_{\text{loc}}^{2}((0,\infty);H_{0}^{1}(D))$.
For definitions, see a text on partial differential equations (PDE)
such as \cite{evans2010partial}. 

\subsection{Acknowledgements}

We thank Charlie Doering for many inspiring discussions, and David
Goluskin for noting the relevance of balanced versus unbalanced heating.
This work was supported by National Science Foundation Award DMS-2025000.

\section{Variational bounds on cooling \label{sec:Variational-bounds}}

We begin with a general method for bounding $\left\langle |\nabla T|^{2}\right\rangle $
from above and below. While it is easy to show using an integration
by parts that 
\begin{equation}
\left\langle |\nabla T|^{2}\right\rangle \lesssim\left\langle |f|^{2}\right\rangle ,\label{eq:easy-bound}
\end{equation}
improving this upper bound and finding a corresponding lower bound
is not so simple. Incidentally, it follows from \prettyref{eq:easy-bound}
and the linearity of the PDE in $T$ that $\left\langle |\nabla T|^{2}\right\rangle $
does not depend on the exact choice of its initial data, so long as
it belongs to $L^{2}(D)$. To obtain better bounds, we invoke a certain
symmetrizing change of variables that couples $T$ to a second ``temperature''
field arising from its adjoint PDE. The resulting bounds are sharp
in the steady case where $\mathbf{u}$ and $f$ do not depend on time. 

Define the admissible set
\[
\mathcal{A}=\left\{ \theta\in L_{\text{loc}}^{2}((0,\infty);H_{0}^{1}(D)):\partial_{t}\theta\in L_{\text{loc}}^{2}((0,\infty);H^{-1}(D)),\ \limsup_{\tau\to\infty}\,\frac{1}{\sqrt{\tau}}||\theta(\cdot,\tau)||_{L^{2}(D)}<\infty\right\} 
\]
and let $\Delta^{-1}:H^{-1}(D)\to H_{0}^{1}(D)$ denote the inverse Laplacian with zero Dirichlet boundary conditions. Recall 
\[
\lambda_{1}=\min_{\varphi\in H_{0}^{1}(D)}\,\frac{\int_{D}|\nabla\varphi|^{2}}{\int_{D}|\varphi|^{2}}.
\]

\begin{prop}
\label{prop:aprioribds}Let $\mathbf{u}(\mathbf{x},t)$ be weakly
divergence-free and have 
\[
\left\langle |\mathbf{u}|^{2}\right\rangle <\infty
\]
and let $f(\mathbf{x},t)$ satisfy
\[
\lim_{\tau\to\infty}\,\frac{1}{\sqrt{\tau}}\int_{0}^{\tau}e^{-\lambda_{1}\left((\tau-t)\wedge t\right)}||f(\cdot,t)||_{L^{2}(D)}\,dt=0.
\]
Any weak solution $T(\mathbf{x},t)$ of 
\[
\begin{cases}
\partial_{t}T+\mathbf{u}\cdot\nabla T=\Delta T+f & \text{in }D\\
T=0 & \text{at }\partial D
\end{cases}
\]
with initial data $T(\cdot,0)\in L^{2}(D)$ must obey the bounds 
\[
\left\langle 2\xi f-\left|\nabla\xi\right|^{2}-\left|\nabla\Delta^{-1}(\partial_{t}+\mathbf{u}\cdot\nabla)\xi\right|^{2}\right\rangle \leq\left\langle |\nabla T|^{2}\right\rangle \leq\left\langle \left|\nabla\eta\right|^{2}+\left|\nabla\Delta^{-1}\left[(\partial_{t}+\mathbf{u}\cdot\nabla)\eta-f\right]\right|^{2}\right\rangle 
\]
for all $\xi,\eta\in\mathcal{A}$.
\end{prop}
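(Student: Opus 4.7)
The plan is to establish both bounds in a unified way: derive a single test-function identity by weakly testing the PDE against an arbitrary $\xi\in\cA$, and then produce each bound by expanding a carefully chosen nonnegative square. Multiplying $\partial_{t}T+\mathbf{u}\cdot\nabla T-\Delta T=f$ by $\xi$, integrating on $D\times(0,\tau)$, and integrating by parts in space (using $T|_{\partial D}=\xi|_{\partial D}=0$ and $\nabla\cdot\mathbf{u}=0$) and in time, then dividing by $\tau|D|$ and passing to $\tau\to\infty$, should yield
$$\langle\nabla\xi\cdot\nabla T\rangle=\langle\xi f\rangle+\langle T(\partial_{t}+\mathbf{u}\cdot\nabla)\xi\rangle.$$
The only subtle point is killing the time boundary contribution $\frac{1}{\tau}\fint_{D}(\xi T)(\cdot,\tau)$. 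For this I would combine the admissibility bound $||\xi(\cdot,\tau)||_{L^{2}}=O(\sqrt{\tau})$ with the Duhamel-type estimate $||T(\cdot,\tau)||_{L^{2}}\lesssim e^{-\lambda_{1}\tau}||T(\cdot,0)||_{L^{2}}+\int_{0}^{\tau}e^{-\lambda_{1}(\tau-s)}||f(\cdot,s)||_{L^{2}}\,ds$, obtained by $L^{2}$-testing the PDE and invoking the Poincar\'e inequality with constant $\lambda_{1}$; the growth hypothesis on $f$ then forces $||T(\cdot,\tau)||_{L^{2}}=o(\sqrt{\tau})$. The same style of computation, now using $\nabla\cdot\mathbf{u}=0$ and $\xi|_{\partial D}=0$ to eliminate the advective flux of $\xi^{2}$, supplies the companion identity
$$\langle\xi(\partial_{t}+\mathbf{u}\cdot\nabla)\xi\rangle=\frac{1}{2}\langle(\partial_{t}+\mathbf{u}\cdot\nabla)\xi^{2}\rangle=0.$$

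With these two identities in hand, both inequalities follow by completing the square. For the lower bound, set $q=(\partial_{t}+\mathbf{u}\cdot\nabla)\xi$ and expand the nonnegative quantity $\langle|\nabla T-\nabla\xi-\nabla\Delta^{-1}q|^{2}\rangle$. Integration by parts against $\Delta\Delta^{-1}q=q$, using that $T$ and $\Delta^{-1}q$ both vanish on $\partial D$, gives $\langle\nabla T\cdot\nabla\Delta^{-1}q\rangle=-\langle Tq\rangle$ and $\langle\nabla\xi\cdot\nabla\Delta^{-1}q\rangle=-\langle\xi q\rangle=0$. Substituting these and the test-function identity into the expansion, the $\pm 2\langle Tq\rangle$ cross terms cancel, leaving the claimed lower bound. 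The upper bound is obtained by the same recipe applied to $h=(\partial_{t}+\mathbf{u}\cdot\nabla)\eta-f$ and the square $\langle|\nabla(T-\eta)-\nabla\Delta^{-1}h|^{2}\rangle\geq 0$. Now the energy identity $\langle|\nabla T|^{2}\rangle=\langle Tf\rangle$---a special case of the test-function identity with $\xi=T$ together with the companion identity applied to $T^{2}$---enters to simplify the cross term $-\langle(T-\eta)h\rangle$ into $\langle|\nabla T|^{2}\rangle-\langle\nabla\eta\cdot\nabla T\rangle$, and rearrangement produces $\langle|\nabla T|^{2}\rangle\leq\langle|\nabla\eta|^{2}\rangle+\langle|\nabla\Delta^{-1}h|^{2}\rangle$.

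The conceptual ingredient here is the choice of square: the non-locality of $\Delta^{-1}$ in the shift $-\nabla\Delta^{-1}q$ is tailored to the anti-symmetric part $\partial_{t}+\mathbf{u}\cdot\nabla$ of the evolution operator, and it is exactly this non-locality that produces the cancellation of $\langle Tq\rangle$. The main technical obstacle, by contrast, is the low regularity of weak solutions: with only $T,\xi\in L_{\text{loc}}^{2}((0,\infty);H_{0}^{1}(D))$ and $\partial_{t}T,\partial_{t}\xi\in L_{\text{loc}}^{2}((0,\infty);H^{-1}(D))$, the trace $T(\cdot,\tau)\in L^{2}(D)$ must be obtained via Lions-Aubin, the $L^{2}$ energy identity must be derived by time regularization, and each spatial integration by parts must be read through the $H_{0}^{1}$--$H^{-1}$ duality. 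Once this bookkeeping is settled, the Duhamel estimate together with the admissibility bound matches the growth hypothesis on $f$ and closes the argument.
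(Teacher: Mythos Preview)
Your approach is correct and takes a genuinely different route from the paper's. The paper proceeds by \emph{symmetrization}: it introduces the time-reversed adjoint solution $T_-$ with terminal data $T_-(\cdot,\tau)=0$, sets $\eta=\frac{1}{2}(T_+-T_-)$ and $\xi=\frac{1}{2}(T_++T_-)$, and shows that these particular choices render the bounds as \emph{equalities} up to $o_\tau(1)$; arbitrary $\tilde\eta,\tilde\xi\in\mathcal{A}$ are then handled by a convexity argument. Your route is more direct---test the PDE once against an arbitrary $\xi$ and produce each bound by completing a single square. The paper's construction makes transparent why the bounds coincide in the steady case (the optimal $\eta,\xi$ are identified from the outset), while yours buys economy: no adjoint problem, no coupled system, just one integration by parts and one nonnegative quadratic.

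One point to patch: your ``companion identity'' $\langle\xi(\partial_t+\mathbf{u}\cdot\nabla)\xi\rangle=0$ is not quite an identity. The time-boundary term $\frac{1}{2\tau}\fint_D|\xi(\cdot,\tau)|^2$ is only $O(1)$, not $o(1)$, since admissibility gives $\|\xi(\cdot,\tau)\|_{L^2}=O(\sqrt\tau)$ rather than $o(\sqrt\tau)$. Fortunately the sign is favorable in both expansions: tracking it through, the residual contributes $+\frac{1}{\tau}\fint_D|\xi(\cdot,\tau)|^2\geq 0$ on the right-hand side of the lower bound and $-\frac{1}{\tau}\fint_D|\eta(\cdot,\tau)|^2\leq 0$ on the right-hand side of the upper bound, so both inequalities survive. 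You should make this sign observation explicit rather than asserting the identity. (The paper sidesteps the issue because its specific $\eta,\xi$ inherit $o(\sqrt\tau)$ growth from the Gronwall estimate on $T_\pm$, and in its convexity step each boundary term pairs an $O(\sqrt\tau)$ factor from the arbitrary test function against an $o(\sqrt\tau)$ factor from the constructed one.)
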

\begin{rem}
\label{rem:weak-solution-clarification}The statement that $T$ is
a weak solution deserves to be clarified, especially as $\mathbf{u}\cdot\nabla T$
is at first glance only in $L^{1}(D)$, a.e.\ in time, and so would
appear not to belong to $H^{-1}(D)=(H_{0}^{1}(D))'$. (Two dimensions
is critical for the relevant Sobolev embedding.) However, $\mathbf{u}$ and $\nabla T$
are divergence- and curl-free, and this is enough to compensate for
their lack of regularity. 

To see why, introduce a stream function $\psi\in H^{1}(D)$ with $\mathbf{u}=\nabla^{\perp}\psi$,
and note that $\mathbf{u}\cdot\nabla T$ is the Jacobian determinant
of the mapping $\mathbf{x}\mapsto(\psi,T)$. Therefore, by the estimate
of Coifman, Lions, Meyer, and Semmes on Jacobian determinants \cite{coifman1993compensated}
(see also \cite{muller1994hardy}), 
\[
\left|\int_{D}\mathbf{u}\cdot\nabla T\varphi\,d\mathbf{x}\right|\lesssim||\mathbf{u}||_{L^{2}(D)}||\nabla T||_{L^{2}(D)}||\nabla\varphi||_{L^{2}(D)}\quad\forall\,\varphi\in C_{c}^{\infty}(D).
\]
Letting $(\cdot,\cdot)$ denote the (normalized) duality bracket between
$H^{-1}(D)$ and $H_{0}^{1}(D)$, the previous inequality extends
to say that 
\begin{equation}
\left|\left(\mathbf{u}\cdot\nabla T,\varphi\right)\right|\lesssim||\mathbf{u}||_{L^{2}(D)}||\nabla T||_{L^{2}(D)}||\nabla\varphi||_{L^{2}(D)}\quad\forall\,\varphi\in H_{0}^{1}(D).\label{eq:compensated-bound}
\end{equation}
Thus, $\mathbf{u}\cdot\nabla T\in H^{-1}(D)$ at a.e.\ time. Clearly,
the same holds for $\Delta T$ and $f$. 

At this point, the definition of $T$ as a weak solution can be carried
out as usual, by requiring that 
\[
T\in L_{\text{loc}}^{2}((0,\infty);H_{0}^{1}(D))\quad\text{and}\quad\partial_{t}T\in L_{\text{loc}}^{2}((0,\infty);H^{-1}(D))
\]
and enforcing the PDE as an equality on $H^{-1}(D)$. Existence and
uniqueness of weak solutions with initial data in $L^{2}(D)$ follows
(see, e.g., \cite{evans2010partial}).
\end{rem}
\begin{rem}
\label{rem:anti-symmetric-integral}A particular consequence of the
previous remark that will be used throughout the proof of \prettyref{prop:aprioribds}
is that 
\[
\left(\mathbf{u}\cdot\nabla T,T\right)=0\quad\text{for a.e. }t
\]
if $\mathbf{u}$ and $T$ are as in the proposition. This follows
from a smooth approximation argument and the fact that $T(\cdot,t)\in H_{0}^{1}(D)$
a.e.\ in $t$. Indeed, if $T(\cdot,t)$ is smooth and compactly supported
in $D$ then
\[
\left(\mathbf{u}\cdot\nabla T,T\right)=\frac{1}{2}\fint_{D}\mathbf{u}\cdot\nabla|T|^{2}\,d\mathbf{x}=0
\]
by the divergence theorem. The bilinear form $T\mapsto(\mathbf{u}\cdot\nabla T,T)$
is continuous on $H_{0}^{1}(D)$, per \prettyref{eq:compensated-bound}.
The claimed identity now follows from the density of smooth and compactly
supported functions in $H_{0}^{1}(D)$.
\end{rem}
\begin{rem}
\label{rem:steady-bds}In the case that $\mathbf{u}$ and $f$ are
steady, the bounds from the proposition become sharp and are achieved
by steady test functions $\eta(\mathbf{x})$ and $\xi(\mathbf{x})$.
That is,
\[
\left\langle |\nabla T|^{2}\right\rangle =\min_{\eta\in H_{0}^{1}(D)}\,\fint_{D}\left|\nabla\eta\right|^{2}+\left|\nabla\Delta^{-1}\left[\mathbf{u}\cdot\nabla\eta-f\right]\right|^{2}\,d\mathbf{x}=\max_{\xi\in H_{0}^{1}(D)}\,\fint_{D}2\xi f-\left|\nabla\xi\right|^{2}-\left|\nabla\Delta^{-1}\mathbf{u}\cdot\nabla\xi\right|^{2}\,d\mathbf{x}.
\]
The crucial point is that, when $\mathbf{u}$ and $f$ are steady,
the above optimizations in $\eta$ and $\xi$ not only provide bounds
on $\left\langle |\nabla T|^{2}\right\rangle $, but also turn out
to be ``strongly'' dual in that their optimal values are the same
(an easy consequence of their Euler-Lagrange equations). See \cite{tobasco2019optimal}
for the full proof of a similar duality arising for steady heat transport
through a fluid layer. 
\end{rem}
\begin{rem}
\label{rem:better-bds}On the other hand, if $f$ is allowed to depend
on time, equality need not hold between our bounds on $\left\langle |\nabla T|^{2}\right\rangle $.
This is because one cannot rule out the possibility that
\[
\liminf_{\tau\to\infty}\,\fint_{0}^{\tau}\fint_{D}|\nabla T|^{2}<\limsup_{\tau\to\infty}\,\fint_{0}^{\tau}\fint_{D}|\nabla T|^{2}
\]
in general while, as will become clear in the proof below, our bounds
actually hold on these limit inferior and limit superior long-time
averages (see \prettyref{eq:better-upperbound} and \prettyref{eq:better-lowerbound}).
We do not know if our bounds are sharp when $f(\mathbf{x})$ is steady
and $\mathbf{u}(\mathbf{x},t)$ is unsteady, though we guess the answer
is ``no''.
\end{rem}
\begin{proof}
Consider the formally adjoint pair of problems
\[
\begin{cases}
(\pm\partial_{t}\pm\mathbf{u}\cdot\nabla)T_{\pm}=\Delta T_{\pm}+f & \text{in }D\\
T_{\pm}=0 & \text{at }\partial D
\end{cases},
\]
the $+$ version of which is solved by the given temperature $T$.
As the value of $\left\langle |\nabla T|^{2}\right\rangle $ does
not depend on its initial data, we let $T(\cdot,0)=0$ and refer to
this version of the temperature as $T_{+}$ in the rest of this proof.
Fixing an arbitrary time $\tau>0$ which we will eventually take to
$\infty$, let $T_{-}$ be the unique weak solution of the $-$ problem
with $T_{-}(\cdot,\tau)=0$. (The definition of ``weak solution''
is standard; see \prettyref{rem:weak-solution-clarification}.) A
quick argument using integration by parts and Gronwall's inequality
shows that
\[
||T_{+}(\cdot,\tau)||_{L^{2}(D)}\vee||T_{-}(\cdot,0)||_{L^{2}(D)}\leq\int_{0}^{\tau}e^{-\lambda_{1}\left((\tau-t)\wedge t\right)}||f(\cdot,t)|_{L^{2}(D)}\,dt\ll\sqrt{\tau}\quad\text{as }\tau\to\infty
\]
by our hypothesis on $f$. Briefly, one notes for $Z=\sqrt{\epsilon+||T_{+}||_{L^{2}(D)}^{2}}$
that 
\[
\frac{d}{dt}Z+\lambda_{1}Z\leq\lambda_{1}\epsilon^{1/2}+||f||_{L^{2}(D)}
\]
for all $\epsilon>0$. Using the integrating factor $e^{-\lambda_{1}(\tau-t)}$
and taking $\epsilon\to0$ yields the $+$ version of the inequality.
The $-$ version follows by reversing time. 

Having defined $T_{\pm}$, we now change variables to the pair
\[
\eta=\frac{1}{2}(T_{+}-T_{-})\quad\text{and}\quad\xi=\frac{1}{2}(T_{+}+T_{-})
\]
and note they satisfy
\begin{equation}
\begin{cases}
(\partial_{t}+\mathbf{u}\cdot\nabla)\eta=\Delta\xi+f & \text{in }D\\
(\partial_{t}+\mathbf{u}\cdot\nabla)\xi=\Delta\eta & \text{in }D\\
\eta=\xi=0 & \text{at }\partial D
\end{cases}\label{eq:mixed-PDEs}
\end{equation}
weakly for $t\in(0,\tau)$. It follows from our previous bounds on
$T_{\pm}$ that
\begin{equation}
||\xi(\cdot,0)||_{L^{2}(D)}+||\xi(\cdot,\tau)||_{L^{2}(D)}+||\eta(\cdot,0)||_{L^{2}(D)}+||\eta(\cdot,\tau)||_{L^{2}(D)}\ll\sqrt{\tau}\quad\text{as }\tau\to\infty.\label{eq:Linftybds-on-mixed-vars}
\end{equation}
Introduce the notation $\left\langle \cdot\right\rangle _{\tau}=\fint_{0}^{\tau}\fint_{D}\cdot\,d\mathbf{x}dt$
for the truncated space- and time-average. Testing the second PDE
in \prettyref{eq:mixed-PDEs} against $\xi$, we get that 
\begin{align*}
\left\langle \nabla\eta\cdot\nabla\xi\right\rangle _{\tau} & =\fint_{0}^{\tau}(-\Delta\eta,\xi)\,dt=\fint_{0}^{\tau}(-(\partial_{t}+\mathbf{u}\cdot\nabla)\xi,\xi)\,dt\\
 & =\fint_{0}^{\tau}\left[-\frac{d}{dt}\frac{1}{2}\fint_{D}|\xi|^{2}\,d\mathbf{x}+(\mathbf{u}\cdot\nabla\xi,\xi)\right]\,dt\\
 & =\frac{1}{2\tau}\fint_{D}|\xi(\mathbf{x},0)|^{2}-|\xi(\mathbf{x},\tau)|^{2}\,d\mathbf{x}
\end{align*}
where again $(\cdot,\cdot)$ is the duality bracket normalized by
$|D|$. That $\mathbf{u}\cdot\nabla\xi\in H^{-1}(D)$ at a.e.\ time
follows from the assumed mean-square integrability of $\mathbf{u}$
and the statement that it is weakly divergence-free; see \prettyref{rem:weak-solution-clarification}
and \prettyref{rem:anti-symmetric-integral} for the proof that $(\mathbf{u}\cdot\nabla\xi,\xi)=0$.
Combined with \prettyref{eq:Linftybds-on-mixed-vars}, the conclusion
is that
\[
\left\langle \nabla\eta\cdot\nabla\xi\right\rangle _{\tau}\to0\quad\text{as }\tau\to\infty.
\]
Hence, $T=T_{+}=(\eta+\xi)/2$ obeys
\begin{align}
\left\langle |\nabla T_{+}|^{2}\right\rangle _{\tau} & =\left\langle |\nabla\eta|^{2}+|\nabla\xi|^{2}\right\rangle _{\tau}+o_{\tau}(1)\label{eq:first-identity}\\
 & =\left\langle |\nabla\eta|^{2}+|\nabla\Delta^{-1}\left[(\partial_{t}+\mathbf{u}\cdot\nabla)\eta-f\right]|^{2}\right\rangle _{\tau}+o_{\tau}(1)\nonumber 
\end{align}
with $o_{\tau}(1)$ denoting a quantity that vanishes as $\tau\to\infty$.
This way of writing the mean-square temperature gradient leads to
the upper bound from the claim. 

To see why, let $\tilde{\eta}\in\mathcal{A}$ and consider the difference
\[
A_{\tau}:=\left\langle |\nabla\tilde{\eta}|^{2}+\left|\nabla\Delta^{-1}\left[(\partial_{t}+\mathbf{u}\cdot\nabla)\tilde{\eta}-f\right]\right|^{2}\right\rangle _{\tau}-\left\langle |\nabla\eta|^{2}+\left|\nabla\Delta^{-1}\left[(\partial_{t}+\mathbf{u}\cdot\nabla)\eta-f\right]\right|^{2}\right\rangle _{\tau}.
\]
We claim it is non-negative up to a term that vanishes as $\tau\to\infty$.
Since $|\cdot|^{2}$ is convex, 
\begin{align*}
\frac{1}{2}A_{\tau} & \geq\left\langle \nabla\eta\cdot\nabla(\tilde{\eta}-\eta)+\nabla\xi\cdot\nabla\Delta^{-1}(\partial_{t}+\mathbf{u}\cdot\nabla)(\tilde{\eta}-\eta)\right\rangle _{\tau}\\
 & =\fint_{0}^{\tau}\left[\fint_{D}\nabla\eta\cdot\nabla(\tilde{\eta}-\eta)\,d\mathbf{x}-((\partial_{t}+\mathbf{u}\cdot\nabla)(\tilde{\eta}-\eta),\xi)\right]\,dt\\
 & =\fint_{0}^{\tau}(-\Delta\eta+(\partial_{t}+\mathbf{u}\cdot\nabla)\xi,\tilde{\eta}-\eta)\,dt-\frac{1}{\tau}\left(\fint_{D}(\tilde{\eta}-\eta)\xi\right)|_{t=0}^{t=\tau}\geq-o_{\tau}(1)
\end{align*}
again by \prettyref{eq:Linftybds-on-mixed-vars} and the growth condition
in our definition of $\mathcal{A}$. Hence,
\begin{equation}
\left\langle |\nabla T_{+}|^{2}\right\rangle _{\tau}\leq\left\langle |\nabla\tilde{\eta}|^{2}+\left|\nabla\Delta^{-1}\left[(\partial_{t}+\mathbf{u}\cdot\nabla)\tilde{\eta}-f\right]\right|^{2}\right\rangle _{\tau}+o_{\tau}(1)\label{eq:better-upperbound}
\end{equation}
for all $\tilde{\eta}\in\mathcal{A}$. This bound is a bit
better than the one from the claim (however, see \prettyref{rem:better-bds}).

Returning to the PDEs in \prettyref{eq:mixed-PDEs}, we now test the
second one against $\eta$ and integrate by parts, again using the
abbreviation $o_{\tau}(1)$ for terms that limit to zero as $\tau\to\infty$.
The result is that
\begin{align*}
\left\langle |\nabla\eta|^{2}\right\rangle _{\tau} & =\fint_{0}^{\tau}(-\Delta\eta,\eta)\,dt=\fint_{0}^{\tau}(-(\partial_{t}+\mathbf{u}\cdot\nabla)\xi,\eta)\,dt\\
 & =\fint_{0}^{\tau}((\partial_{t}+\mathbf{u}\cdot\nabla)\eta,\xi)\,dt+o_{\tau}(1)=\fint_{0}^{\tau}(\Delta\xi+f,\xi)\,dt+o_{\tau}(1)\\
 & =\left\langle -|\nabla\xi|^{2}+f\xi\right\rangle _{\tau}+o_{\tau}(1)
\end{align*}
where in the second line we applied the first PDE in \prettyref{eq:mixed-PDEs}.
Combined with \prettyref{eq:first-identity}, this proves that
\[
\left\langle |\nabla T_{+}|^{2}\right\rangle _{\tau}=\left\langle f\xi\right\rangle _{\tau}+o_{\tau}(1)
\]
and that
\begin{align*}
\left\langle |\nabla T_{+}|^{2}\right\rangle _{\tau} & =\left\langle 2f\xi-|\nabla\eta|^{2}-|\nabla\xi|^{2}\right\rangle _{\tau}+o_{\tau}(1)\\
 & =\left\langle 2f\xi-|\nabla\xi|^{2}-|\nabla\Delta^{-1}(\partial_{t}+\mathbf{u}\cdot\nabla)\xi|^{2}\right\rangle _{\tau}+o_{\tau}(1).
\end{align*}
We are ready to deduce the lower bound from the claim.

Let $\tilde{\xi}\in\mathcal{A}$ and call
\[
B_{\tau}:=\left\langle 2f\xi-|\nabla\xi|^{2}-|\nabla\Delta^{-1}(\partial_{t}+\mathbf{u}\cdot\nabla)\xi|^{2}\right\rangle _{\tau}-\left\langle 2f\tilde{\xi}-|\nabla\tilde{\xi}|^{2}-|\nabla\Delta^{-1}(\partial_{t}+\mathbf{u}\cdot\nabla)\tilde{\xi}|^{2}\right\rangle _{\tau}.
\]
As before, we use the convexity of $|\cdot|^{2}$ to write that 
\begin{align*}
\frac{1}{2}B_{\tau} & \geq\left\langle -f(\tilde{\xi}-\xi)+\nabla\xi\cdot\nabla(\tilde{\xi}-\xi)+\nabla\eta\cdot\nabla\Delta^{-1}(\partial_{t}+\mathbf{u}\cdot\nabla)(\tilde{\xi}-\xi)\right\rangle _{\tau}\\
 & =\fint_{0}^{\tau}\left[\fint_{D}-f(\tilde{\xi}-\xi)+\nabla\xi\cdot\nabla(\tilde{\xi}-\xi)\,dx-((\partial_{t}+\mathbf{u}\cdot\nabla)(\tilde{\xi}-\xi),\eta)\right]\,dt\\
 & =\fint_{0}^{\tau}(-f-\Delta\xi+(\partial_{t}+\mathbf{u}\cdot\nabla)\eta,\tilde{\xi}-\xi)\,dt-\frac{1}{\tau}\left(\fint_{D}(\tilde{\xi}-\xi)\eta\right)|_{t=0}^{t=\tau}\geq-o_{\tau}(1).
\end{align*}
In the last step we applied \prettyref{eq:Linftybds-on-mixed-vars}
to handle the terms at $t=0$ and $t=\tau$. Thus, 
\begin{equation}
\left\langle |\nabla T_{+}|^{2}\right\rangle _{\tau}\geq\left\langle 2f\tilde{\xi}-|\nabla\tilde{\xi}|^{2}-|\nabla\Delta^{-1}(\partial_{t}+\mathbf{u}\cdot\nabla)\tilde{\xi}|^{2}\right\rangle _{\tau}-o_{\tau}(1)\label{eq:better-lowerbound}
\end{equation}
for all $\tilde{\xi}\in\mathcal{A}$. Again, the result is a bit better
than the bound from the claim.
\end{proof}
As a first application of our variational bounds on cooling, we obtain
the lower bound from \prettyref{thm:main-theorem} which applies to
all velocities with a specified mean enstrophy. 

\begin{proof}[Proof of the lower bound from \prettyref{thm:main-theorem}]
The lower bound half of \prettyref{prop:aprioribds} applied with
a steady test function $\xi\in H_{0}^{1}(D)$ shows that
\[
2\left\langle f\xi\right\rangle \leq\left\langle |\nabla T|^{2}\right\rangle +\left\langle |\nabla\xi|^{2}+|\nabla\Delta^{-1}\nabla\cdot(\mathbf{u}\xi)|^{2}\right\rangle .
\]
Scaling $\xi\to\lambda\xi$ and optimizing over $\lambda\in\mathbb{R}$,
we now write that
\begin{equation}
\left\langle f\xi\right\rangle ^{2}\leq\left\langle |\nabla T|^{2}\right\rangle \left\langle |\nabla\xi|^{2}+|\nabla\Delta^{-1}\nabla\cdot(\mathbf{u}\xi)|^{2}\right\rangle .\label{eq:bound-to-be-used}
\end{equation}
This bound holds for all $\xi\in H_{0}^{1}(D)$, and we proceed to
make a choice. 

Given any $\delta\in(0,1)$, define 
\[
\xi_{\delta}=\chi_{\delta}(r)
\]
where $\chi_{\delta}(r)$ is a smooth, radial ``cutoff function''
that goes from zero to one across a small boundary layer. Precisely,
$\chi_{\delta}\in C_{c}^{\infty}([0,1))$ satisfies
\[
0\leq\chi_{\delta}(r)\leq1\ \forall\,r\in[0,1)\quad\text{and}\quad\chi_{\delta}(r)=1\ \forall\,r\leq1-\delta
\]
and has
\[
||\chi_{\delta}'||_{L^{\infty}([0,1))}\lesssim\frac{1}{\delta}
\]
with a constant independent of $\delta$. Note that 
\begin{align*}
\left|\left\langle f\right\rangle \right| & \leq\left|\left\langle f\xi_{\delta}\right\rangle \right|+\left|\left\langle f(\xi_{\delta}-1)\right\rangle \right|\leq\left|\left\langle f\xi_{\delta}\right\rangle \right|+\sqrt{\left\langle |f|^{2}\right\rangle }\sqrt{\left\langle \indicator{r>1-\delta}\right\rangle }\\
 & \leq\left|\left\langle f\xi_{\delta}\right\rangle \right|+C'(f)\sqrt{\delta}.
\end{align*}
Hence, there exists $\delta_{0}(f)>0$ such that 
\begin{equation}\label{eq:some-f}
\frac{1}{2}\left|\left\langle f\right\rangle \right|\leq\left|\left\langle f\xi_{\delta}\right\rangle \right|\quad\forall\,\delta\in(0,\delta_{0}].
\end{equation}
Similarly, 
\begin{equation}\label{eq:divergent-gradient-bd}
\left\langle |\nabla\xi_{\delta}|^{2}\right\rangle =\fint_{D}|\chi'_{\delta}(r)|^{2}\lesssim\fint_{D}\frac{1}{\delta^{2}}\indicator{r>1-\delta}\lesssim\frac{1}{\delta}
\end{equation}
for all $\delta$. 

At this point, we have dealt with each of the terms in the bound \prettyref{eq:bound-to-be-used}
except for the non-local one involving the $L^{2}$-orthogonal projection
$\nabla\Delta^{-1}\nabla$. For this, observe that
\[
\int_{D}|\nabla\Delta^{-1}\nabla\cdot\mathbf{v}|^{2}\,d\mathbf{x}=\max_{\theta\in H_{0}^{1}(D)}\,\int_{D}2\nabla\theta\cdot\mathbf{v}-|\nabla\theta|^{2}\,d\mathbf{x}
\]
for all $\mathbf{v}\in L^{2}(D)$. So, the estimate
\begin{equation}
\left\langle |\nabla\Delta^{-1}\nabla\cdot(\mathbf{u}\xi_{\delta})|^{2}\right\rangle \lesssim\delta^{2}\left\langle |\nabla\mathbf{u}|^{2}\right\rangle \label{eq:no-penetration-bound}
\end{equation}
follows from its steady version
\[
\int_{D}\nabla\theta\cdot\mathbf{u}\xi_{\delta}\lesssim\int_{D}|\nabla\theta|^{2}+\delta^{2}|\nabla\mathbf{u}|^{2}\quad\forall\,\theta\in H_{0}^{1}(D)
\]
which we now establish at a.e.\ time. Since $\mathbf{u}$ is divergence-free
and $\xi_{\delta}=\chi_{\delta}(r)$ depends only on $r$, 
\begin{align*}
\left|\int_{D}\nabla\theta\cdot\mathbf{u}\xi_{\delta}\right| & =\left|\int_{D}\theta\mathbf{u}\cdot\nabla\xi_{\delta}\right|\lesssim\int_{1-\delta}^{1}\left|\overline{\theta\mathbf{u}\cdot\hat{\mathbf{e}}_{r}}\right||\chi'_{\delta}|\,rdr\\
 & \lesssim\fint_{1-\delta}^{1}\left|\overline{\theta\mathbf{u}\cdot\hat{\mathbf{e}}_{r}}\right|\,dr.
\end{align*}
Note $\overline{\cdot}=\fint_{0}^{2\pi}\cdot\,d\theta$ gives the
$\theta$-average, as defined in \prettyref{subsec:Notation}. Continuing,
we claim that 
\begin{equation}
|\overline{\theta\mathbf{u}\cdot\hat{\mathbf{e}}_{r}}|\lesssim(1-r)||\nabla\theta||_{L^{2}(D)}||\nabla\mathbf{u}||_{L^{2}(D)}\quad\text{for a.e. }r\in(0,1).\label{eq:standard-estimate}
\end{equation}
In fact, \prettyref{eq:standard-estimate} follows by a more or less
standard argument involving the Cauchy\textendash Schwarz inequality
applied to $\frac{d}{dr}\overline{\theta^{2}}$ and $\frac{d}{dr}\overline{(\mathbf{u}\cdot\hat{\mathbf{e}}_{r})^{2}}$,
and then again to $\frac{d}{dr}\overline{\theta\mathbf{u}\cdot\hat{\mathbf{e}}_{r}}$,
all of which are derivatives of quantities vanishing at $r=1$. (For
the complete details in an analogous fluid layer setting, see \cite[Lemma 2.6]{tobasco2019optimal}.)
Applying \prettyref{eq:standard-estimate}, we get that
\begin{align*}
\fint_{1-\delta}^{1}\left|\overline{\theta\mathbf{u}\cdot\hat{\mathbf{e}}_{r}}\right|\,dr & \lesssim\fint_{1-\delta}^{1}(1-r)\,dr\cdot||\nabla\theta||_{L^{2}(D)}||\nabla\mathbf{u}||_{L^{2}(D)}\lesssim\delta||\nabla\theta||_{L^{2}(D)}||\nabla\mathbf{u}||_{L^{2}(D)}\\
 & \lesssim\int_{D}|\nabla\theta|^{2}+\delta^{2}|\nabla\mathbf{u}|^{2}.
\end{align*}
As $\theta\in H_{0}^{1}(D)$ was arbitrary, the estimate \prettyref{eq:no-penetration-bound}
is proved.

Summing up, we have shown via \prettyref{eq:bound-to-be-used}-\prettyref{eq:no-penetration-bound} with
the test function $\xi_{\delta}=\chi_{\delta}(r)$ that
\[
\left\langle f\right\rangle ^{2}\lesssim\left\langle |\nabla T|^{2}\right\rangle \left(\frac{1}{\delta}+\delta^{2}\left\langle |\nabla\mathbf{u}|^{2}\right\rangle \right)\quad\forall\,\delta\in(0,\delta_{0}(f)].
\]
The bound
\[
\left\langle f\right\rangle ^{2}\lesssim\left\langle |\nabla T|^{2}\right\rangle \left(\frac{1}{\delta_{0}}\vee\left\langle |\nabla\mathbf{u}|^{2}\right\rangle ^{1/3}\right)
\]
follows. Indeed, if $\delta_{0}^{-1}\leq\left\langle |\nabla\mathbf{u}|^{2}\right\rangle ^{1/3}$
we can choose $\delta=\left\langle |\nabla\mathbf{u}|^{2}\right\rangle ^{-1/3}$, otherwise we take $\delta=\delta_{0}$. This proves the
lower bound in \prettyref{thm:main-theorem} with $C\gtrsim\left\langle f\right\rangle ^{2}$
and $c=\delta_{0}^{-3}$. \end{proof}

A minor modification of the previous proof leads to the (supposedly sub-optimal) lower bound on energy-constrained cooling from the introduction. To achieve it, replace the inequality in \prettyref{eq:no-penetration-bound} with the simpler inequality
\begin{equation}
\left\langle |\nabla\Delta^{-1}\nabla\cdot(\mathbf{u}\xi_{\delta})|^{2}\right\rangle \leq \left\langle |\mathbf{u}|^{2}\right\rangle \label{eq:easy-upper-bound}
\end{equation}
which holds because $\nabla\Delta^{-1}\nabla \cdot$ is an $L^2$-orthogonal projection and $|\xi_\delta| \leq 1$. 
The rest of the proof goes through to yield the following result:
\begin{prop}
\label{prop:lower-bound-energy-constraint}
Let $f(\mathbf{x},t)$ satisfy
\[
\lim_{\tau\to\infty}\,\frac{1}{\sqrt{\tau}}\int_{0}^{\tau}e^{-\lambda_{1}\left((\tau-t)\wedge t\right)}||f(\cdot,t)||_{L^{2}(D)}\,dt=0\quad\text{and}\quad\left\langle |f|^2\right\rangle <\infty.
\]
There exist positive constants $C$ and $c(f)$, the former being numerical and the latter
depending only on $f$, such that
\[
\frac{C}{\emph{Pe}^{2}}\cdot\left\langle f\right\rangle ^{2}\leq\min_{\substack{\mathbf{u}(\mathbf{x},t)\\
\left\langle |\mathbf{u}|^{2}\right\rangle =\emph{Pe}^{2}
}
}\,\left\langle |\nabla T|^{2}\right\rangle 
\]
whenever $\emph{Pe}\geq c(f)$.
\end{prop}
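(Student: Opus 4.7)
The plan is to follow the proof of the lower bound in \prettyref{thm:main-theorem} nearly verbatim, changing only how the non-local projection term is estimated. I would begin from the steady-test-function version of \prettyref{prop:aprioribds} recorded in \eqref{eq:bound-to-be-used}, and take the same radial cutoff $\xi=\xi_{\delta}=\chi_{\delta}(r)$ as before. Because this choice and the source $f$ are unchanged, the bounds $\tfrac{1}{2}|\langle f\rangle|\leq|\langle f\xi_{\delta}\rangle|$ for $\delta\in(0,\delta_{0}(f)]$ and $\langle|\nabla\xi_{\delta}|^{2}\rangle\lesssim 1/\delta$ from \eqref{eq:some-f} and \eqref{eq:divergent-gradient-bd} carry over without change.

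The one modification is in the estimation of the non-local term. Rather than appeal to the boundary-layer bound \eqref{eq:no-penetration-bound}, which is tailored to an enstrophy constraint, I would invoke the elementary inequality \eqref{eq:easy-upper-bound},
\[
\langle|\nabla\Delta^{-1}\nabla\cdot(\mathbf{u}\xi_{\delta})|^{2}\rangle\leq\langle|\mathbf{u}|^{2}\rangle=\text{Pe}^{2},
\]
which holds because $\nabla\Delta^{-1}\nabla\cdot$ is an $L^{2}$-orthogonal projection and $|\xi_{\delta}|\leq 1$. Plugging this into \eqref{eq:bound-to-be-used} yields
\[
\langle f\rangle^{2}\lesssim\langle|\nabla T|^{2}\rangle\left(\frac{1}{\delta}+\text{Pe}^{2}\right)\quad\text{for every }\delta\in(0,\delta_{0}(f)].
\]

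The last step is to optimize in $\delta$. Unlike in the enstrophy-constrained case, the advection contribution $\text{Pe}^{2}$ here does not depend on $\delta$, so the right-hand side is monotone decreasing in $\delta$ over the admissible range; I would simply take $\delta=\delta_{0}(f)$ and then require $\text{Pe}^{2}\geq 1/\delta_{0}(f)$, i.e., $\text{Pe}\geq c(f):=\delta_{0}(f)^{-1/2}$, in order to absorb the $1/\delta$ term. This gives $\langle f\rangle^{2}\lesssim\langle|\nabla T|^{2}\rangle\,\text{Pe}^{2}$ with a numerical constant, which is the claimed bound. There is no real obstacle in this argument; the conceptual point is that an energy constraint, unlike an enstrophy constraint, supplies no mechanism for exploiting the no-penetration boundary condition, which is precisely the reason the resulting scaling $\text{Pe}^{-2}$ is believed to be sub-optimal.
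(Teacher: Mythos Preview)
Your proposal is correct and follows essentially the same approach as the paper: replace the boundary-layer estimate \eqref{eq:no-penetration-bound} by the projection bound \eqref{eq:easy-upper-bound}, keep the same cutoff $\xi_{\delta}$ and the estimates \eqref{eq:some-f}--\eqref{eq:divergent-gradient-bd}, and then optimize in $\delta$ by taking $\delta=\delta_{0}(f)$ and requiring $\text{Pe}$ large enough that $1/\delta_{0}\leq\text{Pe}^{2}$. This is exactly the ``minor modification'' the paper indicates.
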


The reader familiar with the background method may wonder whether
it also leads to these lower bounds. 
Indeed it does, the key point being that either \prettyref{eq:no-penetration-bound} or \prettyref{eq:easy-upper-bound}  
verifies the relevant spectral constraint, depending on whether one considers enstrophy- or energy-constrained cooling.
For more on the connection between
our symmetrization-based bounds and those of the background method,
see \cite{tobasco2019optimal,souza2020wall}.

\section{Optimal steady flows for enstrophy-constrained cooling\label{sec:Steady-branching-flows}}

The rest of this paper is about upper bounds. In this section and
the next, we prove the one from \prettyref{thm:main-theorem} on enstrophy-constrained
cooling. First, to warm-up, we study the steady version of the problem:
\begin{equation}
\min_{\substack{\substack{\mathbf{u}(\mathbf{x})\\
\fint_{D}|\nabla\mathbf{u}|^{2}=\text{Pe}^{2}
}
\\
\mathbf{u}=\mathbf{0}\text{ at }\partial D
}
}\,\left\langle |\nabla T|^{2}\right\rangle \label{eq:steady-pblm}
\end{equation}
for a given source $f(\mathbf{x})$. We return to the
unsteady problem in \prettyref{sec:Unsteady-branching-flows}. Note
we use no-slip boundary conditions from now on. This is
compatible with our goal of proving an upper bound on optimal no-penetration
flows, as minimizing over no-slip flows can only increase the result. 

When $\mathbf{u}$ and $f$ are steady, the upper bound from \prettyref{prop:aprioribds}
leads with a little effort to the double minimization 
\begin{equation}
\min_{\substack{\mathbf{u}(\mathbf{x}),\eta(\mathbf{x})\\
\mathbf{u}=\mathbf{0},\eta=0\text{ at }\partial D
}
}\,\fint_{D}|\nabla\Delta^{-1}\left(\mathbf{u}\cdot\nabla\eta-f\right)|^{2}+\frac{1}{\text{Pe}^{2}}\fint_{D}|\nabla\eta|^{2}\fint_{D}|\nabla\mathbf{u}|^{2}\label{eq:steady-double-min}
\end{equation}
whose optimal value is the same as that of the original problem \prettyref{eq:steady-pblm},
and whose optimizers $(\mathbf{u},\eta)$ yield solutions to it under
the rescaling $\mathbf{u}\to\lambda_{\text{Pe}}\mathbf{u}$ with $\lambda_{\text{Pe}}=\text{Pe}/\sqrt{\left\langle |\nabla\mathbf{u}|^{2}\right\rangle }$.
After explaining this in \prettyref{subsec:steady-change-of-vars},
we go on in \prettyref{subsec:Approximate-solutions-steady-advection}
to show how it is possible to achieve
\[
\mathbf{u}\cdot\nabla\eta\approx f\quad\text{in }H^{-1}(D)
\]
using convection roll-based flows. The estimates given there on the
non-local part of \prettyref{eq:steady-double-min} will come in handy later on 
when we discuss branching flows. 

\subsection{A change of variables\label{subsec:steady-change-of-vars}}

First, we show that the problems in \prettyref{eq:steady-pblm} and
\prettyref{eq:steady-double-min} are the same. Applying the upper
bound from \prettyref{prop:aprioribds}\textemdash here we use \prettyref{rem:steady-bds}
as it gives the better result in the steady case\textemdash we see
that
\[
\left\langle |\nabla T|^{2}\right\rangle =\min_{\substack{\eta(\mathbf{x})\\
\eta=0\text{ at }\partial D
}
}\,F(\mathbf{u},\eta),\quad\text{where}\quad F(\mathbf{u},\eta)=\fint_{D}|\nabla\Delta^{-1}\left(\mathbf{u}\cdot\nabla\eta-f\right)|^{2}+\fint_{D}|\nabla\eta|^{2}.
\]
Evidently, minimizing $\left\langle |\nabla T|^{2}\right\rangle $
is the same as minimizing $F$. A simple change of variables removes
the enstrophy constraint. Consider the substitutions
\begin{equation}
\mathbf{u}=\frac{\text{Pe}}{\sqrt{\fint_{D}|\nabla\tilde{\mathbf{u}}|^{2}}}\tilde{\mathbf{u}}\quad\text{and}\quad\eta=\frac{\sqrt{\fint_{D}|\nabla\tilde{\mathbf{u}}|^{2}}}{\text{Pe}}\tilde{\eta}\label{eq:change-of-vars}
\end{equation}
where $\tilde{\mathbf{u}}$ is not allowed to be identically zero.
Evidently, 
\[
\fint_{D}|\nabla\mathbf{u}|^{2}=\text{Pe}^{2}\quad\forall\,\tilde{\mathbf{u}}
\]
while
\[
F(\mathbf{u},\eta)=\fint_{D}|\nabla\Delta^{-1}(\tilde{\mathbf{u}}\cdot\nabla\tilde{\eta}-f)|^{2}+\frac{1}{\text{Pe}^{2}}\fint_{D}|\nabla\tilde{\mathbf{u}}|^{2}\cdot\fint_{D}|\nabla\tilde{\eta}|^{2}.
\]
Thus, the two minimization problems
\[
\min_{\substack{\substack{\mathbf{u}(\mathbf{x})\\
\fint_{D}|\nabla\mathbf{u}|^{2}=\text{Pe}^{2}
}
\\
\mathbf{u}=\mathbf{0}\text{ at }\partial D
}
}\,\left\langle |\nabla T|^{2}\right\rangle \quad\text{and}\quad\min_{\substack{\tilde{\mathbf{u}}(\mathbf{x}),\tilde{\eta}(\mathbf{x})\\
\tilde{\mathbf{u}}=\mathbf{0},\tilde{\eta}=0\text{ at }\partial D
}
}\,\fint_{D}|\nabla\Delta^{-1}(\tilde{\mathbf{u}}\cdot\nabla\tilde{\eta}-f)|^{2}+\frac{1}{\text{Pe}^{2}}\fint_{D}|\nabla\tilde{\mathbf{u}}|^{2}\cdot\fint_{D}|\nabla\tilde{\eta}|^{2}
\]
are the same. Their optimal values agree, and their optimizers
are related via \prettyref{eq:change-of-vars}. 

A similar change of variables can be done for the unsteady problem,
with the result being a bound rather than an equivalence. Setting
\[
\mathbf{u}=\frac{\text{Pe}}{\sqrt{\left\langle |\nabla\tilde{\mathbf{u}}|^{2}\right\rangle }}\tilde{\mathbf{u}}\quad\text{and}\quad\eta=\frac{\sqrt{\left\langle |\nabla\tilde{\mathbf{u}}|^{2}\right\rangle }}{\text{Pe}}\tilde{\eta}
\]
into the upper bound from \prettyref{prop:aprioribds} shows that
\[
\left\langle |\nabla T|^{2}\right\rangle \leq\min_{\substack{\tilde{\eta}(\mathbf{x},t)\\
\tilde{\eta}=0\text{ at }\partial D
}
}\,\left\langle \left|\nabla\Delta^{-1}\left[(\partial_{t}+\tilde{\mathbf{u}}\cdot\nabla)\tilde{\eta}-f\right]\right|^{2}+\frac{\left\langle |\nabla\tilde{\mathbf{u}}|^{2}\right\rangle }{\text{Pe}^{2}}|\nabla\tilde{\eta}|^{2}\right\rangle 
\]
where $T(\mathbf{x},t)$ is a temperature field associated to the
unsteady velocity $\mathbf{u}(\mathbf{x},t)$. This is the starting
point of \prettyref{sec:Unsteady-branching-flows}. 

\subsection{Steady advection\label{subsec:Approximate-solutions-steady-advection}}

The next step is to see what it takes to drive the non-local terms in these optimizations to zero. Focusing again on 
a steady source $f(\mathbf{x})$, we ask what it takes for a velocity\textendash test
function pair $(\mathbf{u}(\mathbf{x}),\eta(\mathbf{x}))$ to achieve
\[
\mathbf{u}\cdot\nabla\eta\approx f\quad\text{in }H^{-1}(D)
\]
where the notation means that $\fint_{D}|\nabla\Delta^{-1}\left(\mathbf{u}\cdot\nabla\eta-f\right)|^{2}$
is small. Guided by the divergence theorem and our usual assumption
that $\nabla\cdot\mathbf{u}=0$, we see that any successful pair must
 achieve
\[
\int_{\partial U}\mathbf{u}\eta\cdot\hat{\mathbf{n}}\,ds\approx\int_{U}f\,d\mathbf{x}
\]
for $U\subset D$. The following result makes this intuition precise. 

Introduce the notation $D_{r}$ for the disc of radius $r>0$ centered
at the origin of $D$, and again let
\[
\overline{\varphi}(r)=\frac{1}{2\pi}\int_{0}^{2\pi}\varphi(r,\theta)\,d\theta
\]
denote the average over its boundary $\partial D_{r}$. As noted in
\prettyref{subsec:Notation}, we allow ourselves to conflate
a point $\mathbf{x}$ with its polar coordinates $(r,\theta)$. Any
integral over $\rho$ is done with respect to the radial coordinate.
\begin{lem}
\label{lem:decomposition-quadratic-form}Let $f\in L^{2}(D)$ and
suppose that $(\mathbf{u},\eta)\in H^{1}(D;\mathbb{R}^{2})\times H^{1}(D)$
where $\mathbf{u}$ is divergence-free. Then, 
\[
\fint_{D}|\nabla\Delta^{-1}(\mathbf{u}\cdot\nabla\eta-f)|^{2}\,d\mathbf{x}=\frac{1}{2\pi}\int_{0}^{1}|\overline{\mathbf{u}\eta\cdot\hat{\mathbf{e}}_{r}}-F|^{2}\,rdr+Q(\mathbf{u}\eta-g\hat{\mathbf{e}}_{r})
\]
where
\begin{align*}
F(r) & =\frac{1}{2\pi r}\int_{D_{r}}f(\mathbf{x})\,d\mathbf{x},\quad g(r,\theta)=\frac{1}{r}\int_{0}^{r}\rho f(\rho,\theta)\,d\rho,\\
Q(\mathbf{v}) & =\min_{\varphi\in H^{1}(D)}\,\fint_{D}|-\frac{1}{r}\partial_{\theta}\varphi+\mathbf{v}\cdot\hat{\mathbf{e}}_{r}-\overline{\mathbf{v}\cdot\hat{\mathbf{e}}_{r}}|^{2}+|\partial_{r}\varphi+\mathbf{v}\cdot\hat{\mathbf{e}}_{\theta}|^{2}\,d\mathbf{x}.
\end{align*}
\end{lem}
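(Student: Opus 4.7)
The plan is to rewrite the argument of $\nabla\Delta^{-1}$ as the divergence of an explicit $L^{2}$ vector field and then invoke a Helmholtz decomposition on the disc. Using $\nabla\cdot\mathbf{u}=0$ gives $\mathbf{u}\cdot\nabla\eta = \nabla\cdot(\mathbf{u}\eta)$, while the choice of $g$ yields $\nabla\cdot(g\hat{\mathbf{e}}_{r}) = \frac{1}{r}\partial_{r}(rg) = f$. Hence $\mathbf{u}\cdot\nabla\eta - f = \nabla\cdot\mathbf{v}$ for $\mathbf{v} := \mathbf{u}\eta - g\hat{\mathbf{e}}_{r}$, and the 2D Sobolev embedding $H^{1}\hookrightarrow L^{4}$ together with a Hardy-type inequality for $g$ put $\mathbf{v}\in L^{2}(D;\mathbb{R}^{2})$.

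Next I would recognize $\nabla\Delta^{-1}\nabla\cdot$ as the $L^{2}$-orthogonal projection onto the closed subspace $\nabla H_{0}^{1}(D)\subset L^{2}(D;\mathbb{R}^{2})$. On the simply connected disc, the orthogonal complement of $\nabla H_{0}^{1}(D)$ is exactly $\nabla^{\perp}H^{1}(D)$, because orthogonality to every $\nabla w$ with $w\in H_{0}^{1}$ is equivalent to being weakly divergence-free, which on the disc means being $\nabla^{\perp}\varphi$ for some $\varphi\in H^{1}$. This yields the variational identity
\[ \fint_{D}|\nabla\Delta^{-1}\nabla\cdot\mathbf{v}|^{2} = \min_{\varphi\in H^{1}(D)}\fint_{D}|\mathbf{v}-\nabla^{\perp}\varphi|^{2}, \]
which already resembles the right-hand side of the lemma once $\nabla^{\perp}\varphi = -\frac{1}{r}\partial_{\theta}\varphi\,\hat{\mathbf{e}}_{r}+\partial_{r}\varphi\,\hat{\mathbf{e}}_{\theta}$ is substituted.

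The final and most delicate step is to peel off the $\theta$-averaged radial part of $\mathbf{v}$. Setting $\mathbf{v}_{0}:=\overline{\mathbf{v}\cdot\hat{\mathbf{e}}_{r}}(r)\,\hat{\mathbf{e}}_{r}$, a short ODE computation shows $\mathbf{v}_{0}=\nabla w_{0}$ for the radial Dirichlet potential $w_{0}(r)=-\int_{r}^{1}\overline{\mathbf{v}\cdot\hat{\mathbf{e}}_{r}}(s)\,ds\in H_{0}^{1}(D)$, so $\mathbf{v}_{0}$ already sits in the gradient factor of the decomposition. Letting $w\in H_{0}^{1}$ be the full Dirichlet potential with $\Delta w=\nabla\cdot\mathbf{v}$, the remainder $w-w_{0}$ solves a Poisson problem whose source $\nabla\cdot\mathbf{v}_{1}$ (with $\mathbf{v}_{1}:=\mathbf{v}-\mathbf{v}_{0}$) has vanishing $\theta$-mean on every circle; Fourier decoupling in $\theta$ for the rotationally invariant Dirichlet Laplacian then forces $\overline{w-w_{0}}\equiv 0$. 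This gives the orthogonality $\int_{D}\nabla w_{0}\cdot\nabla(w-w_{0})=0$ and the splitting
\[ \fint_{D}|\nabla w|^{2} = \fint_{D}|\nabla w_{0}|^{2} + \fint_{D}|\nabla(w-w_{0})|^{2}. \]
Direct polar integration converts the first summand into the averaged radial term in the lemma (using $\overline{g}=F$, which follows by swapping the order of integration in the definitions of $g$ and $F$), while the second, equal to $\min_{\varphi\in H^{1}}\fint_{D}|\mathbf{v}_{1}-\nabla^{\perp}\varphi|^{2}$, matches $Q(\mathbf{v})$ after expansion in polar components and the harmless substitution $\varphi\to -\varphi$. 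The main obstacle I foresee is the orthogonality of the two gradients, whose cleanest justification is the Fourier-in-$\theta$ decomposition just described; everything else is routine bookkeeping in polar coordinates.
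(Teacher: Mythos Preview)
Your proposal is correct and follows essentially the same route as the paper: rewrite $\mathbf{u}\cdot\nabla\eta-f=\nabla\cdot\mathbf{v}$ with $\mathbf{v}=\mathbf{u}\eta-g\hat{\mathbf{e}}_{r}$, invoke the variational identity $\fint_{D}|\nabla\Delta^{-1}\nabla\cdot\mathbf{v}|^{2}=\min_{\varphi}\fint_{D}|\mathbf{v}+\nabla^{\perp}\varphi|^{2}$, and split off the $\theta$-averaged radial part $\overline{\mathbf{v}\cdot\hat{\mathbf{e}}_{r}}\,\hat{\mathbf{e}}_{r}$.

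The one place you differ from the paper is the step you flag as the ``main obstacle,'' namely the orthogonality that allows the radial piece to decouple. You argue at the potential level, using Fourier decoupling in $\theta$ to show $\overline{w-w_{0}}\equiv 0$ and hence $\int_{D}\nabla w_{0}\cdot\nabla(w-w_{0})=0$. The paper instead works directly at the vector-field level and observes the one-line fact that any purely radial field $a(r)\hat{\mathbf{e}}_{r}$ is $L^{2}$-orthogonal to every divergence-free field: writing the latter as $\nabla^{\perp}\varphi$, the pairing reduces to $\int_{0}^{1}a(r)\bigl[\int_{0}^{2\pi}\partial_{\theta}\varphi\,d\theta\bigr]\,dr=0$ by periodicity. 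With this, the minimization over $\varphi$ splits immediately as
\[
\min_{\varphi}\fint_{D}|\nabla^{\perp}\varphi+\mathbf{v}|^{2}=\fint_{D}|\overline{\mathbf{v}\cdot\hat{\mathbf{e}}_{r}}|^{2}+\min_{\varphi}\fint_{D}|\nabla^{\perp}\varphi+(\mathbf{v}-\overline{\mathbf{v}\cdot\hat{\mathbf{e}}_{r}}\,\hat{\mathbf{e}}_{r})|^{2},
\]
with no need to return to the scalar potentials or invoke Fourier series. Your argument is fine, but this shortcut is worth knowing.
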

\begin{rem}
\label{rem:good-test-function}A particular choice of test function
we use often below is
\[
\varphi=r\partial_{\theta}^{-1}(\mathbf{v}\cdot\hat{\mathbf{e}}_{r})=r\int_{0}^{\theta}\mathbf{v}\cdot\hat{\mathbf{e}}_{r}(r,\theta')-\overline{\mathbf{v}\cdot\hat{\mathbf{e}}_{r}}(r)\,d\theta'.
\]
It sets the first integral in $Q$ to zero, giving the bound
\[
Q(\mathbf{v})\leq\fint_{D}\left|\partial_{r}\left(r\partial_{\theta}^{-1}(\mathbf{v}\cdot\hat{\mathbf{e}}_{r})\right)+\mathbf{v}\cdot\hat{\mathbf{e}}_{\theta}\right|^{2}\,d\mathbf{x}.
\]
\end{rem}
\begin{proof}
We need the fact that
\begin{equation}
\int_{D}|\nabla\Delta^{-1}\nabla\cdot\mathbf{v}|^{2}=\min_{\substack{\mathbf{m}\in L^{2}(D)\\
\nabla\cdot\mathbf{m}=0
}
}\,\int_{D}|\mathbf{m}+\mathbf{v}|^{2}=\min_{\varphi\in H^{1}(D)}\,\int_{D}|\nabla^{\perp}\varphi+\mathbf{v}|^{2}\label{eq:elementary-fact}
\end{equation}
for any $\mathbf{v}\in L^{2}(D)$. A quick proof of it goes as follows:
let $\zeta\in H_{0}^{1}(D)$ satisfy $\Delta\zeta=\nabla\cdot\mathbf{v}$
in $D$, and note that $\nabla\zeta$ is $L^{2}$-orthogonal to divergence-free
$\mathbf{m}$, including $\nabla\zeta-\mathbf{v}$. Hence,
\[
\int_{D}|\mathbf{m}+\mathbf{v}|^{2}=\int_{D}|\mathbf{m}+\mathbf{v}-\nabla\zeta|^{2}+\int_{D}|\nabla\zeta|^{2}\geq\int_{D}|\nabla\zeta|^{2}=\int_{D}|\nabla\Delta^{-1}\nabla\cdot\mathbf{v}|^{2}
\]
and equality holds for $\mathbf{m}=\nabla\zeta-\mathbf{v}$. This
proves the first part of \prettyref{eq:elementary-fact}, and the
rest of it follows from the usual representation of a divergence-free
vector field $\mathbf{m}$ as the perpendicular gradient of a streamfunction
$\varphi$. 

Now using \prettyref{eq:elementary-fact} and the definition of $g$,
which satisfies $\partial_{r}(rg)=rf$, write that
\[
\int_{D}|\nabla\Delta^{-1}\left(\mathbf{u}\cdot\nabla\eta-f\right)|^{2}=\int_{D}|\nabla\Delta^{-1}\nabla\cdot\left(\mathbf{u}\eta-g\hat{\mathbf{e}}_{r}\right)|^{2}=\min_{\nabla\cdot\mathbf{m}=0}\,\int_{D}|\mathbf{m}+\mathbf{v}|^{2}
\]
where
\[
\mathbf{v}=\mathbf{u}\eta-g\hat{\mathbf{e}}_{r}.
\]
In the first step we used that $\nabla\cdot(g\hat{\mathbf{e}}_{r})=f$,
which is clear from its expression in polar coordinates. Observe that
a vector field $a(r)\hat{\mathbf{e}}_{r}$ is $L^{2}$-orthogonal
to any divergence-free $\mathbf{m}$. Indeed, writing
\[
\mathbf{m}=\nabla^{\perp}\varphi=-\frac{1}{r}\partial_{\theta}\varphi\hat{\mathbf{e}}_{r}+\partial_{r}\varphi\hat{\mathbf{e}}_{\theta}
\]
where $\varphi$ is $2\pi$-periodic in $\theta$, 
\[
\int_{D}\mathbf{m}\cdot a(r)\hat{\mathbf{e}}_{r}=\int_{0}^{1}\left[\int_{0}^{2\pi}-\frac{1}{r}\partial_{\theta}\varphi\,d\theta\right]a(r)\,rdr=0.
\]
This prompts the $L^{2}$-orthogonal decomposition
\[
\mathbf{v}=\overline{\mathbf{v}\cdot\hat{\mathbf{e}}_{r}}\hat{\mathbf{e}}_{r}+\mathbf{w}
\]
where $\mathbf{w}$ is the remainder. By orthogonality, 
\begin{align*}
\min_{\nabla\cdot\mathbf{m}=0}\,\fint_{D}|\mathbf{m}+\mathbf{v}|^{2} & =\min_{\nabla\cdot\mathbf{m}=0}\,\fint_{D}|\mathbf{m}+\mathbf{w}|^{2}+\fint_{D}|\overline{\mathbf{v}\cdot\hat{\mathbf{e}}_{r}}\hat{\mathbf{e}}_{r}|^{2}\\
 & =Q(\mathbf{u}\eta-g\hat{\mathbf{e}}_{r})+\int_{0}^{1}|\overline{\mathbf{u}\eta\cdot\hat{\mathbf{e}}_{r}}-\overline{g}|^{2}\,rdr
\end{align*}
as $\overline{\cdot}$ yields functions of $r$ alone. Since
\[
\overline{g}(r)=\frac{1}{2\pi}\int_{0}^{2\pi}\left[\frac{1}{r}\int_{0}^{r}\rho f\,d\rho\right]\,d\theta=\frac{1}{2\pi r}\int_{D_{r}}f=F(r)
\]
the result is proved. 
\end{proof}

We proceed to construct pairs $(\mathbf{u},\eta)$ which are not necessarily
admissible for the minimization in \prettyref{eq:steady-double-min},
but nevertheless do a good job at achieving $\mathbf{u}\cdot\nabla\eta\approx f$
in $H^{-1}(D)$ for a given $f\in L^{2}(D)$. We continue to use the
functions $F=\frac{1}{2\pi r}\int_{D_{r}}f$ and $g=\frac{1}{r}\int_{0}^{r}\rho f$
from \prettyref{lem:decomposition-quadratic-form}. Given $n\in\mathbb{N}$,
define the streamfunction 
\begin{equation}
\psi(\mathbf{x})=rg(r,\theta)\Psi(\theta)\quad\text{where}\quad\Psi(\theta)=\frac{\sqrt{2}}{n}\cos(n\theta)\label{eq:streamfunctions}
\end{equation}
whose velocity is
\begin{equation}
\mathbf{u}=\nabla^{\perp}\psi=-\left(\partial_{\theta}g\Psi+g\Psi'\right)\hat{\mathbf{e}}_{r}+rf\Psi\hat{\mathbf{e}}_{\theta}.\label{eq:velocity-warmup}
\end{equation}
Likewise, define the test function
\begin{equation}
\eta(\mathbf{x})=-\Psi'(\theta).\label{eq:testfunction-warmup}
\end{equation}

\begin{lem}
\label{lem:building-block} Let $f\in L^{2}(D)$. The velocity\textendash test
function pair $(\mathbf{u},\eta)$ in \prettyref{eq:streamfunctions}-\prettyref{eq:testfunction-warmup}
satisfies
\begin{equation}
\int_{D}|\nabla\Delta^{-1}(\mathbf{u}\cdot\nabla\eta-f)|^{2}\,d\mathbf{x}\lesssim\frac{1}{n^{2}}\int_{D}|f|^{2}+|\nabla f|^{2}\,d\mathbf{x}.\label{eq:advection-eqn-bound}
\end{equation}
In particular, we have the estimates
\begin{align}
\left|\overline{\mathbf{u}\eta\cdot\hat{\mathbf{e}}_{r}}-F\right| & \lesssim\frac{1}{n}\int_{D_{r}}|\nabla f|\,d\mathbf{x},\label{eq:flux-bound}\\
||\partial_{r}(r\partial_{\theta}^{-1}(\mathbf{u}\eta\cdot\hat{\mathbf{e}}_{r}-g))||_{L_{\theta}^{1}} & \lesssim\frac{1}{n}\left(\int_{D_{r}}\frac{|f|}{r}+|\nabla f|\,d\mathbf{x}+r||f||_{L_{\theta}^{1}}+r^{2}||\nabla f||_{L_{\theta}^{1}}\right),\label{eq:flux-bound-non-local}\\
||\mathbf{u}\eta\cdot\hat{\mathbf{e}}_{\theta}||_{L_{\theta}^{1}} & \lesssim\frac{r}{n}||f||_{L_{\theta}^{1}}\label{eq:flux-bound-local}
\end{align}
for a.e.\ $r>0$. Furthermore, the streamfunction $\psi$ and velocity
$\mathbf{u}$ obey 
\begin{align}
|\psi| & \lesssim\frac{1}{n}\int_{0}^{r}\rho|f|\,d\rho,\quad|\mathbf{u}\cdot\hat{\mathbf{e}}_{r}|\lesssim\int_{0}^{r}\frac{\rho|f|}{r}+\frac{\rho|\nabla f|}{n}\,d\rho,\quad|\mathbf{u}\cdot\hat{\mathbf{e}}_{\theta}|\lesssim\frac{r}{n}|f|,\label{eq:velocity-bd}\\
|\nabla\mathbf{u}| & \lesssim\int_{0}^{r}\frac{n\rho|f|}{r^{2}}+\frac{n\rho|\nabla f|}{r}+\frac{\rho|\nabla\nabla f|}{n}\,d\rho+|f|+r|\nabla f|\label{eq:enstrophy-bd}
\end{align}
and the test function $\eta$ obeys
\begin{equation}
|\eta|\lesssim1\quad\text{and}\quad|\nabla\eta|\lesssim\frac{n}{r}\label{eq:eta-estimates}
\end{equation}
for a.e.\ $\theta\in[0,2\pi]$ and $r>0$. The constants implicit
in these estimates are independent of all parameters. 
\end{lem}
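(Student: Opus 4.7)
The plan is to feed the explicit pair $(\mathbf{u}, \eta)$ into \prettyref{lem:decomposition-quadratic-form} with $\mathbf{v} = \mathbf{u}\eta - g\hat{\mathbf{e}}_r$. This immediately splits the left-hand side of \prettyref{eq:advection-eqn-bound} into a radial flux term $\frac{1}{2\pi}\int_0^1 |\overline{\mathbf{u}\eta\cdot\hat{\mathbf{e}}_r} - F|^2 \, rdr$ and a local quadratic form $Q(\mathbf{v})$. For the latter, I will use the test function $\varphi = r\partial_\theta^{-1}(\mathbf{v}\cdot\hat{\mathbf{e}}_r)$ from \prettyref{rem:good-test-function}, which annihilates the first integrand in $Q$ and leaves only the squared $L^2$-norm of $\partial_r(r\partial_\theta^{-1}(\mathbf{u}\eta\cdot\hat{\mathbf{e}}_r - g)) + \mathbf{u}\eta\cdot\hat{\mathbf{e}}_\theta$ to control. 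Thus it suffices to establish the slice estimates \prettyref{eq:flux-bound}--\prettyref{eq:flux-bound-local}, square them, and integrate in $r$.

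The key computational input is a trigonometric expansion. Using $(\Psi')^2 = 1 - \cos(2n\theta)$ and $\Psi\Psi' = -\frac{1}{n}\sin(2n\theta)$ together with the defining relation $\overline{g} = F$, a direct calculation gives
\[
\mathbf{u}\eta\cdot\hat{\mathbf{e}}_r - g = -\cos(2n\theta)\,g - \frac{\sin(2n\theta)}{n}\,\partial_\theta g \quad\text{and}\quad \mathbf{u}\eta\cdot\hat{\mathbf{e}}_\theta = \frac{r}{n} f\sin(2n\theta).
\]
The second identity yields \prettyref{eq:flux-bound-local} at once. For \prettyref{eq:flux-bound}, I will average the first identity over $\theta$ and integrate by parts in $\theta$ to transfer a derivative off each fast-oscillating trigonometric factor, thereby producing a common prefactor $1/n$; bounding the result via $|\partial_\theta g(r,\theta)| \leq \frac{1}{r}\int_0^r \rho|\partial_\theta f|\,d\rho$ and $|\partial_\theta f|\leq \rho|\nabla f|$ turns the angular integral into an integral of $|\nabla f|$ over $D_r$.

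For \prettyref{eq:flux-bound-non-local}, the same expansion is the starting point, but one has to pass the non-local operator $\partial_r\circ\partial_\theta^{-1}$ through the oscillating structure. Each application of $\partial_\theta^{-1}$ to a product of $\cos(2n\theta)$ or $\sin(2n\theta)$ with a slowly varying factor contributes a $1/n$ in exchange for one more $\partial_\theta$-derivative on that factor (via integration by parts). Then $\partial_r(r\,\cdot)$ produces a sum of terms involving $g$, $\partial_r g$, $\partial_\theta g$, and $\partial_{r\theta}g$, each with prefactor $1/n$. Converting these into bounds on $f$ and $\nabla f$ via $\partial_r(rg) = rf$ and the integral representation of $g$ produces the four terms on the right-hand side of \prettyref{eq:flux-bound-non-local}. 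Squaring \prettyref{eq:flux-bound}--\prettyref{eq:flux-bound-local}, integrating in $r$, and applying Cauchy\textendash Schwarz and Fubini then yields \prettyref{eq:advection-eqn-bound}.

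The remaining pointwise bounds \prettyref{eq:velocity-bd}, \prettyref{eq:enstrophy-bd}, and \prettyref{eq:eta-estimates} follow from direct differentiation of $\psi = rg\Psi$, $\mathbf{u} = \nabla^\perp\psi$, and $\eta=-\Psi'$, using the uniform bounds $|\Psi|\leq \frac{\sqrt{2}}{n}$, $|\Psi'|\leq\sqrt{2}$, $|\Psi''|\leq \sqrt{2}\,n$ together with the integral definition of $g$ and the inequality $|\partial_\theta f|\leq r|\nabla f|$; for the second-derivative bound \prettyref{eq:enstrophy-bd} one similarly estimates $\partial^2 g$ in terms of $f$, $\nabla f$, and $\nabla\nabla f$. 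I expect the main technical obstacle to be the bookkeeping in \prettyref{eq:flux-bound-non-local}: one must carefully track how $\partial_r\circ\partial_\theta^{-1}$ acts on products of oscillating trigonometric factors with the slowly varying $g$, making sure each application of $\partial_\theta^{-1}$ genuinely contributes a $1/n$ without producing a spurious boundary or mean-value term, and that the four summands on the right-hand side correspond exactly to the four combinations of $r$- and $\theta$-derivatives that arise.
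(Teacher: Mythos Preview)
Your proposal is correct and follows the paper's strategy almost exactly: decompose via \prettyref{lem:decomposition-quadratic-form}, use the test function of \prettyref{rem:good-test-function}, establish the slice estimates \prettyref{eq:flux-bound}--\prettyref{eq:flux-bound-local}, then square and integrate; the pointwise bounds \prettyref{eq:velocity-bd}--\prettyref{eq:eta-estimates} come from direct differentiation just as you describe.

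The one noteworthy difference is tactical. You exploit the explicit double-angle identities $(\Psi')^{2}=1-\cos(2n\theta)$ and $\Psi\Psi'=-\tfrac{1}{n}\sin(2n\theta)$ and then extract the $1/n$ by integrating by parts against the fast factor, even for the partial integrals $\int_{0}^{\theta}(\cdots)$ inside $\partial_{\theta}^{-1}$. The paper instead keeps the expressions in terms of $\Psi,\Psi'$ and, for the $L^{1}_{\theta}$ bound on $\int_{0}^{\theta}g\bigl((\Psi')^{2}-1\bigr)$, partitions $[0,2\pi)$ into the period intervals $I_{j}=[2\pi j/n,2\pi(j+1)/n)$, uses that $(\Psi')^{2}-1$ has zero average on each $I_{j}$, and handles the leftover partial cell separately. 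Both routes yield the same estimate $\lesssim\tfrac{1}{n}\bigl(\|g\|_{L^{1}_{\theta}}+\|\partial_{\theta}g\|_{L^{1}_{\theta}}\bigr)$; your integration-by-parts version is arguably more direct here because the oscillating factor is a single harmonic, while the paper's interval argument would generalize more readily to a $\Psi$ that is merely periodic with period $2\pi/n$ but not pure cosine. Either way the four terms $g,\partial_{r}g,\partial_{\theta}g,\partial_{r\theta}g$ appear exactly as you anticipate, and the conversion to $f,\nabla f$ via \prettyref{eq:estimates-on-g}--\prettyref{eq:estimates-on-gderivs} (in the paper's numbering) gives \prettyref{eq:flux-bound-non-local}.
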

\begin{proof}
We start at the bottom of the claim and work backwards. The estimates
in \prettyref{eq:eta-estimates} are clear given the formula for $\eta$.
To prove \prettyref{eq:velocity-bd} and \prettyref{eq:enstrophy-bd},
we require the following inequalities involving $g=\frac{1}{r}\int_{0}^{r}\rho f$:
\begin{align}
|g| & \leq\frac{1}{r}\int_{0}^{r}\rho|f|\,d\rho,\quad|\partial_{r}g|\leq\frac{1}{r^{2}}\int_{0}^{r}\rho|f|\,d\rho+|f|,\quad|\partial_{\theta}g|\leq\int_{0}^{r}\rho|\nabla f|\,d\rho\label{eq:estimates-on-g}\\
|\partial_{r\theta}g| & \leq\frac{1}{r}\int_{0}^{r}\rho|\nabla f|\,d\rho+r|\nabla f|,\quad|\partial_{\theta\theta}g|\leq r\int_{0}^{r}\rho|\nabla\nabla f|\,d\rho+\int_{0}^{r}\rho|\nabla f|\,d\rho.\label{eq:estimates-on-gderivs}
\end{align}
Now by the definitions of $\psi$ and $\mathbf{u}$,
\[
|\psi|\lesssim\frac{r}{n}|g|,\quad|\mathbf{u}\cdot\hat{\mathbf{e}}_{r}|\lesssim\frac{1}{n}|\partial_{\theta}g|+|g|,\quad|\mathbf{u}\cdot\hat{\mathbf{e}}_{\theta}|\lesssim\frac{r}{n}|f|
\]
and \prettyref{eq:velocity-bd} is proved. Next, compute the gradient
\begin{align*}
\nabla\mathbf{u} & =-\nabla\hat{\mathbf{e}}_{r}\left(\partial_{\theta}g\Psi+g\Psi'\right)-\hat{\mathbf{e}}_{r}\otimes\nabla\left(\partial_{\theta}g\Psi+g\Psi'\right)\\
 & \qquad+\nabla\hat{\mathbf{e}}_{\theta}\left(rf\Psi\right)+\hat{\mathbf{e}}_{\theta}\otimes\nabla\left(rf\Psi\right)\\
 & =-\hat{\mathbf{e}}_{\theta}\otimes\hat{\mathbf{e}}_{\theta}\frac{1}{r}(\partial_{\theta}g\Psi+g\Psi')-\hat{\mathbf{e}}_{r}\otimes\nabla\left(\partial_{\theta}g\Psi+g\Psi'\right)\\
 & \qquad-\hat{\mathbf{e}}_{r}\otimes\hat{\mathbf{e}}_{\theta}f\Psi+\hat{\mathbf{e}}_{\theta}\otimes\nabla\left(rf\Psi\right)
\end{align*}
where
\begin{align*}
\nabla\left(\partial_{\theta}g\Psi+g\Psi'\right) & =\hat{\mathbf{e}}_{r}\left(\partial_{r\theta}g\Psi+\partial_{r}g\Psi'\right)+\hat{\mathbf{e}}_{\theta}\frac{1}{r}\left(\partial_{\theta\theta}g\Psi+2\partial_{\theta}g\Psi'+g\Psi''\right),\\
\nabla\left(rf\Psi\right) & =\hat{\mathbf{e}}_{r}\left(f+r\partial_{r}f\right)\Psi+\hat{\mathbf{e}}_{\theta}\left(\partial_{\theta}f\Psi+f\Psi'\right).
\end{align*}
Hence,
\begin{align*}
|\nabla\mathbf{u}| & \leq\frac{1}{r}\left|\partial_{\theta}g\Psi+g\Psi')\right|+\left|\partial_{r\theta}g\Psi+\partial_{r}g\Psi'\right|+\frac{1}{r}\left|\partial_{\theta\theta}g\Psi+2\partial_{\theta}g\Psi'+g\Psi''\right|\\
 & \qquad+\left|f\Psi\right|+\left|\left(f+r\partial_{r}f\right)\Psi\right|+\left|\partial_{\theta}f\Psi+f\Psi'\right|\\
 & \lesssim\left(|g|+|\partial_{\theta}g|\right)\frac{n}{r}+|f|+|\partial_{r}g|\\
 & \qquad+\left(|\partial_{r}f|+\frac{1}{r}|\partial_{\theta}f|+\frac{1}{r}|\partial_{r\theta}g|+\frac{1}{r^{2}}|\partial_{\theta\theta}g|\right)\frac{r}{n}.
\end{align*}
Combined with \prettyref{eq:estimates-on-g} and \prettyref{eq:estimates-on-gderivs},
this yields the desired estimates in \prettyref{eq:enstrophy-bd}. 

It remains to show the first half of the claim. We require the inequalities
\begin{align}
||g||_{L_{\theta}^{1}} & \leq\frac{1}{r}\int_{D_{r}}|f|\,d\mathbf{x},\quad||\partial_{r}g||_{L_{\theta}^{1}}\leq\frac{1}{r^{2}}\int_{D_{r}}|f|\,d\mathbf{x}+||f||_{L_{\theta}^{1}},\label{eq:integrated-estimates-on-g}\\
||\partial_{\theta}g||_{L_{\theta}^{1}} & \leq\int_{D_{r}}|\nabla f|\,d\mathbf{x},\quad||\partial_{r\theta}g||_{L_{\theta}^{1}}\leq\frac{1}{r}\int_{D_{r}}|\nabla f|\,d\mathbf{x}+r||\nabla f||_{L_{\theta}^{1}}\label{eq:integrated-estimates-on-g-derivs}
\end{align}
which follow from \prettyref{eq:estimates-on-g} and \prettyref{eq:estimates-on-gderivs}
by integration. Going back to the definitions, 
\begin{align*}
\mathbf{u}\eta-g\hat{\mathbf{e}}_{r} & =\left(\partial_{\theta}g\Psi+g\Psi'\right)\Psi'\hat{\mathbf{e}}_{r}-rf\Psi\Psi'\hat{\mathbf{e}}_{\theta}-g\hat{\mathbf{e}}_{r}\\
 & =\left(\partial_{\theta}g\Psi\Psi'+g\left((\Psi')^{2}-1\right)\right)\hat{\mathbf{e}}_{r}-rf\Psi\Psi'\hat{\mathbf{e}}_{\theta}.
\end{align*}
Averaging the $\hat{\mathbf{e}}_{r}$-component in $\theta$ and using that
$F=\overline{g}$, there follows
\begin{align*}
\left|\overline{\mathbf{u}\eta\cdot\hat{\mathbf{e}}_{r}}-F\right| & =\left|\overline{\mathbf{u}\eta\cdot\hat{\mathbf{e}}_{r}-g}\right|\leq\left|\fint_{0}^{2\pi}\partial_{\theta}g\Psi\Psi'\right|+\left|\fint_{0}^{2\pi}g\left((\Psi')^{2}-1\right)\right|\\
 & \lesssim||\partial_{\theta}g||_{L_{\theta}^{1}}\frac{1}{n}+\left|\fint_{0}^{2\pi}g\left((\Psi')^{2}-1\right)\right|
\end{align*}
since $|\Psi|\lesssim1/n$ and $|\Psi'|\lesssim1$. Introduce the
operator $\partial_{\theta}^{-1}$ defined by 
\[
\partial_{\theta}^{-1}\varphi(r,\theta)=\int_{0}^{\theta}\varphi(r,\theta')-\overline{\varphi}(r)\,d\theta'
\]
and observe that $\overline{(\Psi')^{2}}=1$. Hence, 
\begin{align}
\left|\fint_{0}^{2\pi}g\left((\Psi')^{2}-1\right)\right| & =\left|\fint_{0}^{2\pi}\partial_{\theta}g\partial_{\theta}^{-1}(\Psi')^{2}\right|\nonumber \\
 & \lesssim||\partial_{\theta}g||_{L_{\theta}^{1}}||\partial_{\theta}^{-1}(\Psi')^{2}||_{L_{\theta}^{\infty}}\lesssim||\partial_{\theta}g||_{L_{\theta}^{1}}\frac{1}{n}\label{eq:Lebesgue-lemma-avg}
\end{align}
and \prettyref{eq:flux-bound} is proved.

Continuing with \prettyref{eq:flux-bound-non-local}, write that
\begin{align*}
|\partial_{r}(r\partial_{\theta}^{-1}(\mathbf{u}\eta\cdot\hat{\mathbf{e}}_{r}-g))| & \leq|\partial_{\theta}^{-1}(\mathbf{u}\eta\cdot\hat{\mathbf{e}}_{r}-g)|+|r\partial_{r}\partial_{\theta}^{-1}(\mathbf{u}\eta\cdot\hat{\mathbf{e}}_{r}-g)|\\
 & =|\partial_{\theta}^{-1}\left[\partial_{\theta}g\Psi\Psi'+g\left((\Psi')^{2}-1\right)\right]|+r|\partial_{r}\partial_{\theta}^{-1}\left[\partial_{\theta}g\Psi\Psi'+g\left((\Psi')^{2}-1\right)\right]|.
\end{align*}
Of course,
\[
|\partial_{\theta}^{-1}\left[\partial_{\theta}g\Psi\Psi'\right]|\lesssim||\partial_{\theta}g||_{L_{\theta}^{1}}\frac{1}{n}\quad\text{and}\quad|\partial_{r}\partial_{\theta}^{-1}\left[g\Psi\Psi'\right]|\lesssim||\partial_{r\theta}g||_{L_{\theta}^{1}}\frac{1}{n}.
\]
Using the operator $\partial_{\theta}^{-1}$ again, we see that
\begin{align*}
\left|\partial_{\theta}^{-1}\left[g\left((\Psi')^{2}-1\right)\right]\right| & \leq\left|\int_{0}^{\theta}g\left((\Psi')^{2}-1\right)\right|+\left|\int_{0}^{2\pi}g\left((\Psi')^{2}-1\right)\right|,\\
\left|\partial_{r}\partial_{\theta}^{-1}\left[g\left((\Psi')^{2}-1\right)\right]\right| & \leq\left|\int_{0}^{\theta}\partial_{r}g\left((\Psi')^{2}-1\right)\right|+\left|\int_{0}^{2\pi}\partial_{r}g\left((\Psi')^{2}-1\right)\right|.
\end{align*}
The last two terms on the righthand side are controlled by the inequality
\prettyref{eq:Lebesgue-lemma-avg} and one just like it with $\partial_{r}g$
in place of $g$. To deal with the first two terms on the righthand
side above, we require the inequalities
\begin{align}
\left\Vert \int_{0}^{\theta}g\left((\Psi')^{2}-1\right)\right\Vert _{L_{\theta}^{1}} & \lesssim\left(||g||_{L_{\theta}^{1}}+||\partial_{\theta}g||_{L_{\theta}^{1}}\right)\frac{1}{n},\label{eq:Lebesgue-lemma-integral1}\\
\left\Vert \int_{0}^{\theta}\partial_{r}g\left((\Psi')^{2}-1\right)\right\Vert _{L_{\theta}^{1}} & \lesssim\left(||\partial_{r}g||_{L_{\theta}^{1}}+||\partial_{r\theta}g||_{L_{\theta}^{1}}\right)\frac{1}{n}.\label{eq:Lebesgue-lemma-integral2}
\end{align}
To prove them, divide $[0,2\pi)$ into the disjoint intervals $I_{j}=[2\pi j/n,2\pi(j+1)/n)$
indexed by $j=0,\dots,n-1$, and note that $(\Psi')^{2}-1$ is $\frac{2\pi}{n}$-periodic
and averages to zero over each $I_{j}$. Choosing $k$ such that $\theta\in I_{k}$,
write that
\begin{align*}
\left|\int_{0}^{\theta}g\left((\Psi')^{2}-1\right)\right| & \leq\sum_{j=0}^{k-1}\left|\int_{I_{j}}\left(g-\fint_{I_{j}}g\right)\left((\Psi')^{2}-1\right)\right|+\left|\int_{2\pi k/n}^{\theta}g\left((\Psi')^{2}-1\right)\right|\\
 & \lesssim\sum_{j=0}^{k-1}\int_{I_{j}}\left|g-\fint_{I_{j}}g\right|+\int_{2\pi k/n}^{\theta}|g|\lesssim||\partial_{\theta}g||_{L_{\theta}^{1}}\frac{1}{n}+||g||_{L_{\theta}^{1}(I_{k})}.
\end{align*}
Remembering that $k=k(\theta)$ and integrating this bound over $\theta\in[0,2\pi)$
yields \prettyref{eq:Lebesgue-lemma-integral1}. The proof of the
inequality from \prettyref{eq:Lebesgue-lemma-integral2} is much the
same. Altogether, we have shown that
\begin{align}
\left\Vert \partial_{\theta}^{-1}\left[\partial_{\theta}g\Psi\Psi'+g\left((\Psi')^{2}-1\right)\right]\right\Vert _{L_{\theta}^{1}} & \lesssim\left(||g||_{L_{\theta}^{1}}+||\partial_{\theta}g||_{L_{\theta}^{1}}\right)\frac{1}{n},\label{eq:final-Lebesgue1}\\
\left\Vert \partial_{r}\partial_{\theta}^{-1}\left[\partial_{\theta}g\Psi\Psi'+g\left((\Psi')^{2}-1\right)\right]\right\Vert _{L_{\theta}^{1}} & \lesssim\left(||\partial_{r}g||_{L_{\theta}^{1}}+||\partial_{r\theta}g||_{L_{\theta}^{1}}\right)\frac{1}{n}\label{eq:final-Lebesgue2}
\end{align}
which when combined with the inequalities in \prettyref{eq:integrated-estimates-on-g}
and \prettyref{eq:integrated-estimates-on-g-derivs} lead to the estimate
\prettyref{eq:flux-bound-non-local} in the claim. The estimate \prettyref{eq:flux-bound-local}
follows from what we have already proved (namely, \prettyref{eq:velocity-bd}
and \prettyref{eq:eta-estimates}).

Finally, we prove \prettyref{eq:advection-eqn-bound}. By \prettyref{lem:decomposition-quadratic-form}
and the remark appearing immediately after, 
\[
\int_{D}|\nabla\Delta^{-1}(\mathbf{u}\cdot\nabla\eta-f)|^{2}\lesssim\int_{0}^{1}|\overline{\mathbf{u}\eta\cdot\hat{\mathbf{e}}_{r}}-F|^{2}\,rdr+\int_{D}|\partial_{r}\left(r\partial_{\theta}^{-1}(\mathbf{u}\eta\cdot\hat{\mathbf{e}}_{r}-g)\right)|^{2}+\int_{D}|\mathbf{u}\eta\cdot\hat{\mathbf{e}}_{\theta}|^{2}.
\]
Applying \prettyref{eq:flux-bound}-\prettyref{eq:flux-bound-local}
along with Jensen's inequality gives the bound
\[
\int_{0}^{1}|\overline{\mathbf{u}\eta\cdot\hat{\mathbf{e}}_{r}}-F|^{2}\,rdr\lesssim\int_{0}^{1}\left(\frac{r^{2}}{n^{2}}\int_{D_{r}}|\nabla f|^{2}\right)\,rdr\lesssim\frac{1}{n^{2}}\int_{D}|\nabla f|^{2}
\]
as well as the bounds
\begin{align*}
\int_{D}|\partial_{r}\left(r\partial_{\theta}^{-1}(\mathbf{u}\eta\cdot\hat{\mathbf{e}}_{r}-g)\right)|^{2} & \lesssim\int_{0}^{1}\frac{1}{n^{2}}\left(\int_{D_{r}}|f|^{2}+r^{2}\int_{D_{r}}|\nabla f|^{2}+r^{2}||f||_{L_{\theta}^{2}}^{2}+r^{4}||\nabla f||_{L_{\theta}^{2}}^{2}\right)\,rdr\\
 & \lesssim\frac{1}{n^{2}}\int_{D}|f|^{2}+|\nabla f|^{2}
\end{align*}
and
\[
\int_{D}|\mathbf{u}\eta\cdot\hat{\mathbf{e}}_{\theta}|^{2}\lesssim\int_{0}^{1}\left(\frac{r^{2}}{n^{2}}||f||_{L_{\theta}^{2}}^{2}\right)\,rdr\lesssim\frac{1}{n^{2}}\int_{D}|f|^{2}.
\]
The proof is complete.
\end{proof}
As a quick application of this last result, we show that the velocities
in \prettyref{eq:velocity-warmup} have finite enstrophy if $f$ is
sufficiently regular. By the pointwise estimate from \prettyref{eq:enstrophy-bd}
and Jensen's inequality, 
\begin{align*}
|\nabla\mathbf{u}|^{2} & \lesssim\int_{0}^{r}\left[\frac{n^{2}\rho^{2}}{r^{3}}|f|^{2}+\frac{n^{2}\rho^{2}}{r}|\nabla f|^{2}+\frac{\rho^{2}r}{n^{2}}|\nabla\nabla f|^{2}\right]\,d\rho\\
 & \qquad+|f|^{2}+r^{2}|\nabla f|^{2}.
\end{align*}
Hence,
\begin{align*}
\int_{D}|\nabla\mathbf{u}|^{2}\,d\mathbf{x} & \lesssim\int_{0}^{2\pi}\int_{0\leq\rho\leq r\leq1}\left[\frac{n^{2}\rho^{2}}{r^{2}}|f(\rho,\theta)|^{2}+n^{2}\rho^{2}|\nabla f(\rho,\theta)|^{2}+\frac{\rho^{2}r^{2}}{n^{2}}|\nabla\nabla f(\rho,\theta)|^{2}\right]\,drd\rho d\theta\\
 & \qquad+\int_{D}|f|^{2}+r^{2}|\nabla f|^{2}\,d\mathbf{x}\\
 & \lesssim\int_{D}n^{2}|f|^{2}+n^{2}r|\nabla f|^{2}+\frac{r}{n^{2}}|\nabla\nabla f|^{2}\,d\mathbf{x}.
\end{align*}

\section{Unsteady branching flows for enstrophy-constrained cooling\label{sec:Unsteady-branching-flows}}

\prettyref{sec:Steady-branching-flows} considered the steady optimal
cooling problem and explained how to find ``approximate $H^{-1}$-solutions''
to the corresponding advection equation $\mathbf{u}\cdot\nabla\eta=f$.
We now return to the original unsteady setting of \prettyref{thm:main-theorem},
to prove our upper bound on 
\[
\min_{\substack{\substack{\mathbf{u}(\mathbf{x},t)\\
\left\langle |\nabla\mathbf{u}|^{2}\right\rangle =\text{Pe}^{2}
}
\\
\mathbf{u}=\mathbf{0}\text{ at }\partial D
}
}\,\left\langle |\nabla T|^{2}\right\rangle 
\]
for a general source $f(\mathbf{x},t)$. We do so by constructing a family of well-chosen branching flows $\{\mathbf{u}_{\text{Pe}}\}$
whose temperatures $\{T_{\text{Pe}}\}$ satisfy 
\[
\left\langle |\nabla T_{\text{Pe}}|^{2}\right\rangle \lesssim\left\langle |f|^{2}+|\nabla f|^{2}+|\nabla\nabla f|^{2}\right\rangle \frac{\log^{4/3}\text{Pe}}{\text{Pe}^{2/3}}\quad\text{with}\quad\left\langle |\nabla\mathbf{u}_{\text{Pe}}|^{2}\right\rangle =\text{Pe}^{2}.
\]
\prettyref{subsec:Branching-flows} starts by defining a general family
of convection roll-based branching flows (see the bottom row of \prettyref{fig:flows}). 
\prettyref{subsec:branching-bounds}
estimates their cooling and \prettyref{subsec:Optimal-branching-flows}
optimizes over their free parameters. The upper bound from \prettyref{thm:main-theorem}
is finally proved at the end of this section.

Picking up where we left off in \prettyref{sec:Steady-branching-flows},
recall the upper bound
\[
\min_{\substack{\substack{\mathbf{u}(\mathbf{x},t)\\
\left\langle |\nabla\mathbf{u}|^{2}\right\rangle =\text{Pe}^{2}
}
\\
\mathbf{u}=\mathbf{0}\text{ at }\partial D
}
}\,\left\langle |\nabla T|^{2}\right\rangle \leq\min_{\substack{\mathbf{u}(\mathbf{x},t),\eta(\mathbf{x},t)\\
\mathbf{u}=\mathbf{0},\eta=0\text{ at }\partial D
}
}\,\left\langle |\nabla\Delta^{-1}\left[(\partial_{t}+\mathbf{u}\cdot\nabla)\eta-f\right]|^{2}+\frac{\left\langle |\nabla\mathbf{u}|^{2}\right\rangle }{\text{Pe}^{2}}|\nabla\eta|^{2}\right\rangle 
\]
where on the righthand side the magnitude of $\mathbf{u}$ is unconstrained.
This bound follows from \prettyref{prop:aprioribds} and the change
of variables $(\mathbf{u},\eta)\to(\lambda_{\text{Pe}}\mathbf{u},\lambda_{\text{Pe}}^{-1}\eta)$
with $\lambda_{\text{Pe}}=\text{Pe}/\sqrt{\left\langle |\nabla\mathbf{u}|^{2}\right\rangle }$,
as explained in \prettyref{subsec:steady-change-of-vars}. A special
case occurs for a steady test function $\eta(\mathbf{x})$: the temperature
field $T$ associated to $\lambda_{\text{Pe}}\mathbf{u}$ obeys
\[
\left\langle |\nabla T|^{2}\right\rangle \leq\left\langle |\nabla\Delta^{-1}\left[\mathbf{u}\cdot\nabla\eta-f\right]|^{2}\right\rangle +\frac{1}{\text{Pe}^{2}}\left\langle |\nabla\mathbf{u}|^{2}\right\rangle \fint_{D}|\nabla\eta|^{2}
\]
for all $\eta\in H_{0}^{1}(D)$. We proceed to define our branching
flows.

\subsection{Branching flows\label{subsec:Branching-flows}}

By a \emph{branching flow} $\mathbf{u}(\mathbf{x},t)$ and a corresponding
(steady) test function $\eta(\mathbf{x})$, we mean a divergence-free
velocity field
\[
\mathbf{u}=\nabla^{\perp}\psi\quad\text{where}\quad\psi(\mathbf{x},t)=\sum_{k=1}^{n}\chi_{k}(r)\psi_{k}(\mathbf{x},t)
\]
and the scalar function
\[
\eta(\mathbf{x})=\sum_{k=1}^{n}\chi_{k}(r)\eta_{k}(\mathbf{x})
\]
where $\{\chi_{k}\}$, $\{\psi_{k}\}$, and $\{\eta_{k}\}$ are as
follows. Let
\begin{equation}
F(r,t)=\frac{1}{2\pi r}\int_{D_{r}}f(\mathbf{x},t)\,d\mathbf{x}\quad\text{and}\quad g(r,\theta,t)=\fint_{0}^{r}\rho f(\rho,\theta,t)\,d\rho\label{eq:unsteady-Fandg}
\end{equation}
and set 
\[
\psi_{k}=rg\Psi_{k}(\theta),\quad\eta_{k}=-\Psi_{k}'(\theta),\quad\text{and}\quad\Psi_{k}(\theta)=\sqrt{2}l_{k}\cos(\frac{\theta}{l_{k}})\quad\text{for }k=1,\dots,n.
\]
The parameters $\{l_{k}^{-1}\}\subset\mathbb{N}$ and $n\in\mathbb{N}$
are free, and will eventually be optimized when it comes time to prove
\prettyref{thm:main-theorem}. The functions $\{\chi_{k}\}$ are described
in the paragraph after the next. 

To help organize the discussion, we always assume that
\begin{equation}
l_{1}>l_{2}>\cdots>l_{n}.\label{eq:lengths-decrease}
\end{equation}
We label the largest and smallest scales as
\[
l_{\text{bulk}}=l_{1}\quad\text{and}\quad l_{\text{bl}}=l_{n}
\]
noting that they occur in the bulk of the disc and in a boundary layer
near $r=1$, respectively. The individual velocities 
\[
\mathbf{u}_{k}=\nabla^{\perp}\psi_{k}=-\left(\partial_{\theta}g\Psi_{k}+g\Psi_{k}'\right)\hat{\mathbf{e}}_{r}+rf\Psi_{k}\hat{\mathbf{e}}_{\theta}
\]
are simply unsteady versions of the ones occurring in our prior discussion
of roll-like flows, and so are governed by the estimates in \prettyref{lem:building-block}.
The individual test functions 
\[
\eta_{k}=-\Psi_{k}'
\]
are also like those in the lemma. 

The new ingredients are the functions $\{\chi_{k}\}$ which we use
to interpolate between the individual building blocks listed above.
We use a family of smooth and compactly supported ``cutoff functions''
defined via a choice of points $\{r_{k}\}$ satisfying 
\begin{equation}
\frac{1}{2}<r_{1}<r_{2}<\cdots<r_{n}<1,\label{eq:points-increase}
\end{equation}
with
\[
r_{1}=r_{\text{bulk}}\quad\text{and}\quad r_{n}=r_{\text{bl}}.
\]
These functions can be quite general, but to fix ideas we let
\begin{equation}
\text{supp}\,\chi_{1}\subset(0,r_{2}),\quad\text{supp}\,\chi_{n}\subset(r_{n-1},1),\quad\text{and}\quad\text{supp}\,\chi_{k}\subset(r_{k-1},r_{k+1})\quad\text{for }k=2,\dots,n-1.\label{eq:bump-function-supports}
\end{equation}
We also let
\begin{equation}
\sum_{k=1}^{n}(\chi_{k}(r))^{2}=1\quad\forall\,r\in(0,r_{\text{bl}})\label{eq:Pythagorean-sum}
\end{equation}
and take
\begin{equation}
\chi_{k}\chi_{k'}\neq0\iff|k-k'|\leq1.\label{eq:nearly-diagonal}
\end{equation}
This last condition simplifies the calculation of products such as
$\mathbf{u}\eta$.

To specify our cutoff functions further, introduce the lengths 
\[
\delta_{\text{bulk}}=1-r_{\text{bulk}},\quad\delta_{\text{bl}}=1-r_{\text{bl}},\quad\text{and}\quad\delta_{k}=r_{k+1}-r_{k}\quad\text{for }k=1,\dots,n-1
\]
and, following the pattern, call
\[
\delta_{n}=\delta_{\text{bl}}.
\]
Let
\begin{equation}
|\chi_{k}|\vee|\chi_{k+1}|\lesssim1,\quad|\chi_{k}'|\vee|\chi_{k+1}'|\lesssim\frac{1}{\delta_{k}},\quad|\chi_{k}''|\vee|\chi_{k+1}''|\lesssim\frac{1}{\delta_{k}^{2}}\quad\forall\,r\in(r_{k},r_{k+1})\label{eq:bump-functions-general}
\end{equation}
for $k=1,\dots,n-1$, and let
\begin{equation}
|\chi_{n}|\lesssim1,\quad|\chi_{n}'|\lesssim\frac{1}{\delta_{\text{bl}}},\quad|\chi_{n}''|\lesssim\frac{1}{\delta_{\text{bl}}^{2}}\quad\forall\,r\in(r_{\text{bl}},1).\label{eq:bump-functions-boundarylayer}
\end{equation}
The constants implicit in these hypotheses are independent of all
parameters.

The reader looking for specific cutoff functions should consult \cite[Section 5.1]{tobasco2019optimal}.
There, we describe a similar branching flow in a fluid layer, the
$z$-coordinate of which is analogous to $r$. The exact choice of
these functions does not affect the scaling of our bounds in $\text{Pe}$, but does affect their prefactors.

\subsection{Upper bounds on branching flows\label{subsec:branching-bounds}}

Having defined our branching flows, we proceed to estimate their cooling
using the results of \prettyref{sec:Variational-bounds}. We write
the estimate in terms of a continuously varying version of the parameters
$\{l_{k}\}$ and $\{r_{k}\}$, given by
\begin{equation}
\ell(r)=l_{k}\frac{r_{k+1}-r}{r_{k+1}-r_{k}}+l_{k+1}\frac{r-r_{k}}{r_{k+1}-r_{k}}\quad\text{for }r\in[r_{k},r_{k+1}]\label{eq:continuous-scale}
\end{equation}
and $k=1,\dots,n-1$. By construction, $\ell(r_{k})=l_{k}$ for each
$k$. Recall $\delta_{k}=r_{k+1}-r_{k}$.
\begin{prop}
\label{prop:crucial-estimate} Let $\{\mathbf{u}\}$ be a family of
branching flows as defined in \prettyref{subsec:Branching-flows},
whose parameters $\{l_{k}\}_{k=1}^{n}$ and $\{r_{k}\}_{k=1}^{n}$
obey 
\begin{align}
|l_{k+1}-l_{k}| & \sim l_{k+1}\sim l_{k}\quad\text{and}\quad\delta_{k+1}\sim\delta_{k}\quad\text{for }k=1,\dots,n-1\label{eq:branching-analysis-assumptions-1}\\
l_{k}\lesssim\delta_{k} & \quad\text{for }k=1,\dots,n\label{eq:branching-analysis-assumptions-2}
\end{align}
with fixed numerical prefactors. Define the rescaled velocities 
\[
\lambda_{\emph{Pe}}\mathbf{u}\quad\text{with}\quad\lambda_{\emph{Pe}}=\frac{\emph{Pe}}{\sqrt{\left\langle |\nabla\mathbf{u}|^{2}\right\rangle }}
\]
and let their temperature fields $T_{\emph{Pe}}$ solve
\[
\begin{cases}
\partial_{t}T_{\emph{Pe}}+\lambda_{\emph{Pe}}\mathbf{u}\cdot\nabla T_{\emph{Pe}}=\Delta T_{\emph{Pe}}+f & \text{in }D\\
T_{\emph{Pe}}=0 & \text{at }\partial D
\end{cases}
\]
weakly with arbitrary $L^{2}$-initial data. The estimate 
\[
\left\langle |\nabla T_{\emph{Pe}}|^{2}\right\rangle \lesssim C_{0}(f)\left[l_{\emph{bulk}}^{2}+\int_{r_{\emph{bulk}}}^{r_{\emph{bl}}}(\ell'(r))^{2}\,dr+\delta_{\emph{bl}}+\frac{1}{\emph{Pe}^{2}}\left(\frac{1}{l_{\emph{bulk}}^{2}}+\int_{r_{\emph{bulk}}}^{r_{\emph{bl}}}\frac{1}{(\ell(r))^{2}}\,dr+\frac{\delta_{\emph{bl}}}{l_{\emph{bl}}^{2}}\right)^{2}\right]
\]
holds with
\[
C_{0}(f)=\left\langle |f|^{2}+|\nabla f|^{2}+|\nabla\nabla f|^{2}\right\rangle 
\]
and a numerical prefactor depending only on the ones from \prettyref{eq:branching-analysis-assumptions-1}
and \prettyref{eq:branching-analysis-assumptions-2}.
\end{prop}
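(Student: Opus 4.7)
The plan is to combine the variational upper bound from \prettyref{prop:aprioribds} with the single-scale estimates in \prettyref{lem:building-block}, localized to annuli via the cutoff functions $\{\chi_k\}$. Since the test function $\eta$ is steady, the starting point is the inequality
\[
\left\langle |\nabla T_{\text{Pe}}|^2\right\rangle \leq \left\langle |\nabla\Delta^{-1}(\mathbf{u}\cdot\nabla\eta - f)|^2\right\rangle + \frac{1}{\text{Pe}^2}\left\langle |\nabla\mathbf{u}|^2\right\rangle \fint_D|\nabla\eta|^2
\]
recalled at the start of \prettyref{sec:Unsteady-branching-flows}. Using $\nabla\cdot\mathbf{u}=0$ together with $\nabla\cdot(g\hat{\mathbf{e}}_r)=f$, I would rewrite $\mathbf{u}\cdot\nabla\eta - f = \nabla\cdot(\mathbf{u}\eta - g\hat{\mathbf{e}}_r)$, so that \prettyref{lem:decomposition-quadratic-form} is directly applicable to the non-local term. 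The goal is then to bound each piece with all constants depending on $f$ only through $C_0(f)$.

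For the local term, I would decompose $D$ into the bulk disc $D_{r_{\text{bulk}}}$, the transition annuli $(r_k,r_{k+1})$, and the boundary layer $(r_{\text{bl}},1)$. On each annulus the relevant building block $(\mathbf{u}_k,\eta_k)$ has frequency $1/l_k$, so the pointwise estimates \prettyref{eq:enstrophy-bd} and \prettyref{eq:eta-estimates} (with $n = 1/l_k$), combined with the cutoff bounds $|\chi_k'|\lesssim 1/\delta_k$, the near-diagonal condition \prettyref{eq:nearly-diagonal}, and the assumption \prettyref{eq:branching-analysis-assumptions-2} (which ensures the cutoff-derivative contributions are dominated by the azimuthal ones), yield contributions of order $C_0(f)\cdot\delta_k/l_k^2$ per annulus, $C_0(f)/l_{\text{bulk}}^2$ from the bulk, and $C_0(f)\cdot\delta_{\text{bl}}/l_{\text{bl}}^2$ from the boundary layer. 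Under assumption \prettyref{eq:branching-analysis-assumptions-1} one has $\delta_k/l_k^2\sim\int_{r_k}^{r_{k+1}}1/\ell(r)^2\,dr$, so summation recovers the parenthesized factor in the claim. An identical computation handles $\fint_D|\nabla\eta|^2$, and multiplying the two gives the squared quantity that appears with $\text{Pe}^{-2}$.

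The heart of the argument is the non-local term, which I would estimate region by region via \prettyref{lem:decomposition-quadratic-form} using the good test function from \prettyref{rem:good-test-function}. On the bulk disc only the $k=1$ block is active, and \prettyref{eq:advection-eqn-bound} with $n=1/l_{\text{bulk}}$ directly delivers a contribution $\lesssim l_{\text{bulk}}^2\,C_0(f)$. On a transition annulus, the Pythagorean identity \prettyref{eq:Pythagorean-sum} forces $\chi_k^2\,\overline{\mathbf{u}_k\eta_k\cdot\hat{\mathbf{e}}_r}+\chi_{k+1}^2\,\overline{\mathbf{u}_{k+1}\eta_{k+1}\cdot\hat{\mathbf{e}}_r}\approx F$, while the cross terms $\chi_k\chi_{k+1}\overline{(\mathbf{u}_k\eta_{k+1}+\mathbf{u}_{k+1}\eta_k)\cdot\hat{\mathbf{e}}_r}$ have essentially vanishing $\theta$-average because $\overline{\Psi_k\Psi_{k+1}'}$ and $\overline{\Psi_k'\Psi_{k+1}'}$ are zero for the distinct integer frequencies $1/l_k$ and $1/l_{k+1}$; these therefore generate only flux errors of the type \prettyref{eq:flux-bound}. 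The key observation is that $\chi_k\Psi_k+\chi_{k+1}\Psi_{k+1}$ differs from a single-scale stream function by a residual of amplitude $|l_{k+1}-l_k|\sim\ell'(r)\,\delta_k$, so the resulting non-local contribution per annulus is of order $(\ell'(r))^2\,\delta_k\,C_0(f)$; summing via \prettyref{eq:continuous-scale} produces $\int_{r_{\text{bulk}}}^{r_{\text{bl}}}(\ell'(r))^2\,dr\cdot C_0(f)$. Finally, on the boundary layer, where $\mathbf{u}$ and $\eta$ are both $O(1)$ on a set of area $\sim\delta_{\text{bl}}$, the crude $L^2$-projection bound $\fint_D|\mathbf{u}\eta-g\hat{\mathbf{e}}_r|^2\lesssim\delta_{\text{bl}}\,C_0(f)$ based on \prettyref{eq:velocity-bd} suffices.

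The main obstacle I anticipate is the careful bookkeeping in the transition annuli: verifying the vanishing of the cross-frequency $\theta$-averages, tracking the cancellations that replace the naive size $1/\delta_k^2$ with the far smaller $(\ell'(r))^2$, and handling the extra cutoff-derivative contributions $\chi_k'\psi_k\hat{\mathbf{e}}_\theta$ that appear in $\mathbf{u}$. Once these algebraic cancellations are in place, time-averaging is harmless: every estimate holds pointwise in $t$ and the $f$-dependence enters only through the space integrals $|f|^2+|\nabla f|^2+|\nabla\nabla f|^2$, whose long-time average is exactly $C_0(f)$. Assembling the per-region contributions then yields the advertised bound.
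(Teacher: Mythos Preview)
Your overall architecture matches the paper exactly: start from the variational upper bound with a steady $\eta$, split via \prettyref{lem:decomposition-quadratic-form} into the mean-flux piece and the quadratic form $Q$, and separately bound $\left\langle|\nabla\mathbf{u}|^2\right\rangle$ and $\fint_D|\nabla\eta|^2$. Your sketch of the local (enstrophy) term is correct and agrees with the paper's \prettyref{lem:enstrophy-estimates}, and your treatment of the mean-flux piece $\overline{\mathbf{u}\eta\cdot\hat{\mathbf{e}}_r}-F$ via the orthogonality of $\Psi_k'\Psi_{k'}'$ for $k\neq k'$ is exactly what \prettyref{lem:mean-flux-avg} does.

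The gap is in your mechanism for $Q(\mathbf{u}\eta-g\hat{\mathbf{e}}_r)$. The claim that ``$\chi_k\Psi_k+\chi_{k+1}\Psi_{k+1}$ differs from a single-scale stream function by a residual of amplitude $|l_{k+1}-l_k|$'' is not what produces the factor $(\ell')^2$, and taken literally it is misleading: since $|l_{k+1}-l_k|\sim l_k$ by hypothesis, that ``residual'' is the same size as the functions themselves, and there is no useful single-scale approximation at the pointwise level. What actually happens (see \prettyref{lem:quad-form}) is that after choosing the test function of \prettyref{rem:good-test-function}, one must estimate
\[
\partial_r\bigl(r\partial_\theta^{-1}(\mathbf{v}\cdot\hat{\mathbf{e}}_r)\bigr),\qquad \mathbf{v}\cdot\hat{\mathbf{e}}_r=\sum_{|k-k'|\le1}\chi_k\chi_{k'}\bigl[\partial_\theta g\,\Psi_k\Psi_{k'}'+g(\Psi_k'\Psi_{k'}'-\delta_{kk'})\bigr].
\]
The antiderivative $\partial_\theta^{-1}$ applied to the rapidly oscillating products $\Psi_k'\Psi_{k'}'-\delta_{kk'}$ and $\Psi_k\Psi_{k'}'$ gains a factor $l_k\vee l_{k'}$ by the Riemann--Lebesgue-type estimates already proved in \prettyref{lem:building-block} (specifically \prettyref{eq:final-Lebesgue1}--\prettyref{eq:final-Lebesgue2}), so $\partial_\theta^{-1}(\mathbf{v}\cdot\hat{\mathbf{e}}_r)$ is $O(l_k)$ in $L^1_\theta$. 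Then $\partial_r$ acts on the cutoffs $(\chi_k\chi_{k'})'$ to produce a factor $1/\delta_k$, giving $l_k/\delta_k\sim|\ell'|$; squared and integrated over the annulus of width $\delta_k$ this yields $(\ell')^2\delta_k$. So the smallness comes from antidifferentiation in $\theta$ of oscillatory products, not from any closeness to a single scale. Once you rewrite the transition-annulus argument in these terms, the rest of your plan goes through and coincides with the paper's proof.
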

By \prettyref{prop:aprioribds}, we already know that the rescaled
velocities $\lambda_{\text{Pe}}\mathbf{u}$ achieve
\[
\left\langle |\nabla T_{\text{Pe}}|^{2}\right\rangle \leq\left\langle |\nabla\Delta^{-1}\left[\mathbf{u}\cdot\nabla\eta-f\right]|^{2}\right\rangle +\frac{1}{\text{Pe}^{2}}\left\langle |\nabla\mathbf{u}|^{2}\right\rangle \fint_{D}|\nabla\eta|^{2}\quad\forall\,\eta\in H_{0}^{1}(D).
\]
To prove \prettyref{prop:crucial-estimate}, we shall plug in the test
functions $\eta$ from \prettyref{subsec:Branching-flows}, and estimate
the ensuing mess of terms. We handle the ``advection term'' involving
$\mathbf{u}\cdot\nabla\eta-f$ in \prettyref{subsec:The-advection-term}
and the ``enstrophy term'' involving $\left\langle |\nabla\mathbf{u}|^{2}\right\rangle $
in \prettyref{subsec:The-enstrophy-term}.

\subsubsection{The advection term\label{subsec:The-advection-term}}

Averaging the result of \prettyref{lem:decomposition-quadratic-form}
shows that
\[
\left\langle |\nabla\Delta^{-1}\left(\mathbf{u}\cdot\nabla\eta-f\right)|^{2}\right\rangle \leq\left\langle |\overline{\mathbf{u}\cdot\hat{\mathbf{e}}_{r}\eta}-F|^{2}\right\rangle +\left\langle Q(\mathbf{u}\eta-g\hat{\mathbf{e}}_{r})\right\rangle 
\]
where for the reader's convenience we repeat the definition of the
quadratic form: 
\begin{equation}
Q(\mathbf{v})=\min_{\varphi\in H^{1}(D)}\,\fint_{D}|-\frac{1}{r}\partial_{\theta}\varphi+\mathbf{v}\cdot\hat{\mathbf{e}}_{r}-\overline{\mathbf{v}\cdot\hat{\mathbf{e}}_{r}}|^{2}+|\partial_{r}\varphi+\mathbf{v}\cdot\hat{\mathbf{e}}_{\theta}|^{2}\,d\mathbf{x}.\label{eq:quad-form-reminder}
\end{equation}
We bound the first average in \prettyref{lem:mean-flux-avg} and the
second one in \prettyref{lem:quad-form}. 
\begin{lem}
\label{lem:mean-flux-avg}Every branching flow\textendash test function
pair $(\mathbf{u},\eta)$ defined in \prettyref{subsec:Branching-flows}
satisfies
\[
\left\langle |\overline{\mathbf{u}\cdot\hat{\mathbf{e}}_{r}\eta}-F|^{2}\right\rangle \lesssim l_{\emph{bulk}}^{2}\left\langle |\nabla f|^{2}\right\rangle +\delta_{\emph{bl}}\left\langle |f|^{2}\right\rangle .
\]
\end{lem}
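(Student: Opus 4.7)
The plan is to decompose $\overline{\mathbf{u}\cdot\hat{\mathbf{e}}_r\eta}-F$ into a diagonal piece controlled by the building block \prettyref{lem:building-block}, a collection of near-diagonal cross terms, and an outer boundary-layer remainder where $\sum_k \chi_k^2 \neq 1$. Since each $\chi_k$ depends only on $r$, we have $\mathbf{u}\cdot\hat{\mathbf{e}}_r = -\frac{1}{r}\partial_\theta\psi = \sum_k \chi_k\,\mathbf{u}_k\cdot\hat{\mathbf{e}}_r$ and $\eta = \sum_{k'}\chi_{k'}\eta_{k'}$, so
\[
\overline{\mathbf{u}\cdot\hat{\mathbf{e}}_r\eta} = \sum_{|k-k'|\leq 1}\chi_k\chi_{k'}\,\overline{(\mathbf{u}_k\cdot\hat{\mathbf{e}}_r)\eta_{k'}}
\]
by the near-diagonality \prettyref{eq:nearly-diagonal}. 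Using $\sum_k\chi_k^2=1$ on $(0,r_{\text{bl}})$ to write $F = F\sum_k \chi_k^2$ there, the difference $\overline{\mathbf{u}\cdot\hat{\mathbf{e}}_r\eta}-F$ splits as a sum $I+II+III$ where $I = \sum_k \chi_k^2(\overline{\mathbf{u}_k\cdot\hat{\mathbf{e}}_r\eta_k}-F)$, $II$ is the boundary residual supported on $(r_{\text{bl}},1)$, and $III$ collects the $|k-k'|=1$ cross terms.

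The diagonal piece $I$ is handled directly by \prettyref{eq:flux-bound}, which with $n=1/l_k$ gives $|\overline{\mathbf{u}_k\cdot\hat{\mathbf{e}}_r\eta_k}-F|\lesssim l_k\int_{D_r}|\nabla f|\,d\mathbf{x}$; combined with $l_k\leq l_{\text{bulk}}$ and Cauchy\textendash Schwarz on $D_r$, squaring and integrating $r\,dr$ produces $\lesssim l_{\text{bulk}}^2\fint_D|\nabla f|^2$. The boundary piece $II$, where only $\chi_n$ survives and $1-\chi_n^2$ may be nonzero, is bounded using the crude inequality $|F(r,t)|^2\lesssim\fint_D|f|^2$ and the fact that $(r_{\text{bl}},1)$ has width $\delta_{\text{bl}}$; this yields the second claimed term $\delta_{\text{bl}}\fint_D|f|^2$, with the remaining diagonal-type contribution there absorbed into the first term.

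The mildly nontrivial step is the cross term $III$. For $|k-k'|=1$, expand $\mathbf{u}_k\cdot\hat{\mathbf{e}}_r\,\eta_{k'} = \partial_\theta g\,\Psi_k\Psi_{k'}' + g\,\Psi_k'\Psi_{k'}'$. The first summand is controlled by the crude bound $|\Psi_k|\lesssim l_k$, giving an average $\lesssim l_k\|\partial_\theta g\|_{L^1_\theta}$. For the second, observe that $\Psi_k'\Psi_{k'}' = \cos((1/l_k-1/l_{k'})\theta) - \cos((1/l_k+1/l_{k'})\theta)$ has zero $\theta$-mean, and its mean-zero antiderivative $H$ involves frequencies $|1/l_k\pm 1/l_{k'}|\gtrsim 1/l_k$ by the spacing assumption \prettyref{eq:branching-analysis-assumptions-1}, so $\|H\|_\infty\lesssim l_k$. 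Integration by parts in $\theta$ gives $|\overline{g\,\Psi_k'\Psi_{k'}'}| = |\overline{\partial_\theta g\,H}|\lesssim l_k\|\partial_\theta g\|_{L^1_\theta}$. Only $O(1)$ pairs contribute at each $r$, and \prettyref{eq:integrated-estimates-on-g-derivs} bounds $\|\partial_\theta g\|_{L^1_\theta}\leq \int_{D_r}|\nabla f|\,d\mathbf{x}$, so $III$ inherits the same pointwise bound as $I$. Taking the long-time average then yields the claimed inequality. The only potential obstacle is verifying the quasi-orthogonality in $III$, and this is exactly where the hypothesis \prettyref{eq:branching-analysis-assumptions-1} enters.
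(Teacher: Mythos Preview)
Your argument is correct and follows essentially the same decomposition as the paper: split $\overline{\mathbf{u}\eta\cdot\hat{\mathbf{e}}_r}-F$ into the diagonal sum $\sum_k\chi_k^2(\overline{\mathbf{u}_k\eta_k\cdot\hat{\mathbf{e}}_r}-F)$ handled by \eqref{eq:flux-bound}, the off-diagonal $|k-k'|=1$ terms handled by integrating $g\,\Psi_k'\Psi_{k'}'$ by parts in $\theta$ against its mean-zero antiderivative, and the boundary-layer residual from $1-\sum_k\chi_k^2$ on $(r_{\text{bl}},1)$. Your explicit invocation of the spacing hypothesis \eqref{eq:branching-analysis-assumptions-1} to control the difference frequency $|1/l_k-1/l_{k'}|\gtrsim 1/l_k$ is a point the paper's lemma statement omits but its proof tacitly uses when asserting $\|\partial_\theta^{-1}(\Psi_k'\Psi_{k'}')\|_{L^\infty_\theta}\lesssim l_k\vee l_{k'}$, so your care there is warranted.
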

\begin{proof}
It suffices to prove the steady analog of the result at a.e.\ time,
i.e., 
\[
\fint_{D}|\overline{\mathbf{u}\cdot\hat{\mathbf{e}}_{r}\eta}-F|^{2}\lesssim l_{\text{bulk}}^{2}\fint_{D}|\nabla f|^{2}+\delta_{\text{bl}}\fint_{D}|f|^{2}.
\]
From the definitions of $\mathbf{u}$ and $\eta$ and our assumption
\prettyref{eq:nearly-diagonal},
\[
\mathbf{u}\eta\cdot\hat{\mathbf{e}}_{r}=\sum_{|k-k'|\leq1}\chi_{k}\chi_{k'}\mathbf{u}_{k}\eta_{k'}\cdot\hat{\mathbf{e}}_{r}
\]
so that
\[
\left|\overline{\mathbf{u}\eta\cdot\hat{\mathbf{e}}_{r}}-\sum_{k=1}^{n}\chi_{k}^{2}F\right|\leq\sum_{k=1}^{n}\chi_{k}^{2}\left|\overline{\mathbf{u}_{k}\eta_{k}\cdot\hat{\mathbf{e}}_{r}}-F\right|+\sum_{\substack{|k-k'|\leq1\\
k\neq k'
}
}|\chi_{k}\chi_{k'}|\left|\overline{\mathbf{u}_{k}\eta_{k'}\cdot\hat{\mathbf{e}}_{r}}\right|.
\]
The first term is handled by \prettyref{eq:flux-bound}, which shows
that
\[
\left|\overline{\mathbf{u}_{k}\eta_{k}\cdot\hat{\mathbf{e}}_{r}}-F\right|\lesssim l_{k}\int_{D_{r}}|\nabla f|.
\]
To control the second term, write using the $L^{2}$-orthogonality
of $\Psi_{k}'(\theta)$ and $\Psi_{k'}'(\theta)$ that
\begin{align*}
\left|\overline{\mathbf{u}_{k}\eta_{k'}\cdot\hat{\mathbf{e}}_{r}}\right| & =\left|\fint_{0}^{2\pi}\partial_{\theta}g\Psi_{k}\Psi_{k'}'+g\Psi_{k}'\Psi_{k'}'\right|\\
 & \leq\left|\fint_{0}^{2\pi}\partial_{\theta}g\Psi_{k}\Psi_{k'}'\right|+\left|\fint_{0}^{2\pi}\partial_{\theta}g\partial_{\theta}^{-1}\left(\Psi_{k}'\Psi_{k'}'\right)\right|\\
 & \lesssim||\partial_{\theta}g||_{L_{\theta}^{1}}(l_{k}\vee l_{k'})\lesssim(l_{k}\vee l_{k'})\int_{D_{r}}|\nabla f|
\end{align*}
where in the last step we used \prettyref{eq:integrated-estimates-on-g-derivs}.
Adding up, 
\[
\left|\overline{\mathbf{u}\eta\cdot\hat{\mathbf{e}}_{r}}-\sum_{k=1}^{n}\chi_{k}^{2}F\right|\lesssim\sum_{|k-k'|\leq1}|\chi_{k}\chi_{k'}|(l_{k}\vee l_{k'})\int_{D_{r}}|\nabla f|
\]
for $r>0$.

Continuing, we have by the triangle and Jensen's inequality that
\begin{align*}
\int_{D}|\overline{\mathbf{u}\eta\cdot\hat{\mathbf{e}}_{r}}-F|^{2}\,d\mathbf{x} & \lesssim\int_{0}^{1}|\overline{\mathbf{u}\eta\cdot\hat{\mathbf{e}}_{r}}-\sum_{k=1}^{n}\chi_{k}^{2}F|^{2}\,rdr+\int_{0}^{1}\left|1-\sum_{k=1}^{n}\chi_{k}^{2}\right|^{2}|F|^{2}\,rdr\\
 & \lesssim\sum_{|k-k'|\leq1}\int_{0}^{1}|\chi_{k}\chi_{k'}|^{2}(l_{k}\vee l_{k'})^{2}r^{3}\int_{D_{r}}|\nabla f|^{2}+\int_{0}^{1}\left|1-\sum_{k=1}^{n}\chi_{k}^{2}\right|^{2}|F|^{2}r\\
 & \lesssim l_{\text{bulk}}^{2}\int_{D}|\nabla f|^{2}+\delta_{\text{bl}}\fint_{r_{\text{bl}}}^{1}|F|^{2}r.
\end{align*}
Recalling the definition of $F=\frac{1}{2\pi r}\int_{D_{r}}f$ from
\prettyref{eq:unsteady-Fandg}, we use Jensen's inequality once more
to get that
\[
\fint_{r_{\text{bl}}}^{1}|F|^{2}r\lesssim\fint_{r_{\text{bl}}}^{1}\left[\int_{D_{r}}|f|^{2}\right]r\lesssim\int_{D}|f|^{2}.
\]
The result follows.
\end{proof}
\begin{lem}
\label{lem:quad-form} Under the assumptions in \prettyref{eq:branching-analysis-assumptions-1},
the branching flow\textendash test function pairs $(\mathbf{u},\eta)$
from \prettyref{subsec:Branching-flows} satisfy
\[
\left\langle Q(\mathbf{u}\eta-g\hat{\mathbf{e}}_{r})\right\rangle \lesssim\left(l_{\emph{bulk}}^{2}+\int_{r_{\emph{bulk}}}^{r_{\emph{bl}}}(\ell'(r))^{2}\,dr+\frac{l_{\emph{bl}}^{2}}{\delta_{\emph{bl}}}\right)\left\langle |f|^{2}+|\nabla f|^{2}\right\rangle +\delta_{\emph{bl}}\left\langle |f|^{2}\right\rangle 
\]
with a constant depending only on those in \prettyref{eq:branching-analysis-assumptions-1}.
\end{lem}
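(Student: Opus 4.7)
The plan is to apply the upper bound on $Q$ from the remark following Lemma 3.2 with the choice $\varphi = r\partial_\theta^{-1}(\mathbf{u}\eta\cdot\hat{\mathbf{e}}_r - g)$, which reduces the claim to estimating
\[
\fint_D \left|\partial_r\bigl(r\partial_\theta^{-1}(\mathbf{u}\eta\cdot\hat{\mathbf{e}}_r - g)\bigr) + \mathbf{u}\eta\cdot\hat{\mathbf{e}}_\theta\right|^2 d\mathbf{x}
\]
at a.e.\ time, followed by time-averaging using the regularity assumption on $f$. I would then split the radial integration into three pieces: the bulk $\{r \leq r_{\text{bulk}}\}$, the sequence of transitional annuli $[r_k, r_{k+1}]$, and the boundary layer $\{r_{\text{bl}} \leq r \leq 1\}$.

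On the bulk, only $\chi_1 \equiv 1$ is active and the integrand reduces exactly to the single roll-based configuration at scale $l_{\text{bulk}}$ analyzed in Lemma 3.3; its estimates on $\|\partial_r(r\partial_\theta^{-1}(\mathbf{u}_k\eta_k\cdot\hat{\mathbf{e}}_r-g))\|_{L_\theta^1}$ and $\|\mathbf{u}_k\eta_k\cdot\hat{\mathbf{e}}_\theta\|_{L_\theta^1}$ applied with $n = 1/l_{\text{bulk}}$, combined with Jensen's inequality, yield a contribution $\lesssim l_{\text{bulk}}^2 \fint_D |f|^2 + |\nabla f|^2$. On the boundary layer of width $\delta_{\text{bl}}$, only $\chi_n$ is active, and the same lemma at scale $n = 1/l_{\text{bl}}$ produces a contribution $\lesssim (l_{\text{bl}}^2/\delta_{\text{bl}})\fint_D |f|^2+|\nabla f|^2$; the extra piece $\delta_{\text{bl}}\langle |f|^2\rangle$ absorbs the defect $|1-\chi_n^2|^2|F|^2$ and the corresponding piece of $\int r|F|^2$ near $r=1$.

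The core of the argument is on each transitional annulus $[r_k, r_{k+1}]$, where both $\chi_k$ and $\chi_{k+1}$ are active and $\mathbf{u} = \nabla^\perp\psi$ decomposes as $\mathbf{u} = \chi_k\mathbf{u}_k + \chi_{k+1}\mathbf{u}_{k+1} + (\chi_k'\psi_k + \chi_{k+1}'\psi_{k+1})\hat{\mathbf{e}}_\theta$. The integrand then expands into (i) diagonal contributions $\chi_k^2(\cdots)$ and $\chi_{k+1}^2(\cdots)$ that are bounded by Lemma 3.3 at scales $l_k, l_{k+1} \lesssim l_{\text{bulk}}$ and absorbed into the $l_{\text{bulk}}^2$ term; (ii) cross-scale contributions $\chi_k\chi_{k+1}\mathbf{u}_k\eta_{k+1}$ whose trigonometric products $\Psi_k\Psi_{k+1}'$ admit the same $L^2_\theta$-orthogonality and antiderivative estimates that drove the proof of Lemma 3.3, yielding amplitudes of size $\max(l_k,l_{k+1})\sim l_k$; and (iii) transition contributions arising from the $\chi_k'$-factors in $\mathbf{u}\eta\cdot\hat{\mathbf{e}}_\theta$ and from differentiating $\chi_k$ inside $\partial_r(r\partial_\theta^{-1}(\cdots))$, which must combine to produce an amplitude proportional to $\Psi_{k+1}-\Psi_k$ in a suitable sense. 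Under the hypothesis $|l_{k+1}-l_k|\sim l_{k+1}\sim l_k$ and $\delta_{k+1}\sim\delta_k$, this amplitude is $O(l_{k+1}-l_k)$, and after squaring, dividing by $\delta_k^2$, and integrating over an interval of length $\delta_k$, the result is a contribution of size $(l_{k+1}-l_k)^2/\delta_k = \int_{r_k}^{r_{k+1}}(\ell'(r))^2\,dr$.

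The main obstacle I expect is verifying the cancellation in step (iii): a naive estimate of $\chi_k'\psi_k \sim l_k/\delta_k$ would give $l_k^2/\delta_k$ rather than $(l_{k+1}-l_k)^2/\delta_k$ and so produce the wrong bound (it would force $\ell(r)$ rather than $\ell'(r)$ in the final expression). The right way is either to choose a smarter test function $\varphi$ on the overlap (e.g., $\varphi = \sum_k \chi_k r\partial_\theta^{-1}(\mathbf{u}_k\eta_k\cdot\hat{\mathbf{e}}_r - g)$) so that the $\chi_k'$ contributions in $\partial_r\varphi$ match those in $\mathbf{u}\eta\cdot\hat{\mathbf{e}}_\theta$ up to a difference $\psi_{k+1}-\psi_k = O(l_{k+1}-l_k)$, or equivalently to exploit the identity $\mathbf{u}\eta = \nabla^\perp(\psi\eta) - \psi\nabla^\perp\eta$ to rewrite the transition pieces as gradients of differences between consecutive scales. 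Once this cancellation is in place, the rest is a routine application of Lemma 3.3's pointwise estimates together with Jensen's inequality, and summing the three regional contributions and averaging in time yields the claimed bound.
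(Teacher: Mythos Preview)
Your overall strategy matches the paper's: reduce to estimating $\fint_D|\partial_r(r\partial_\theta^{-1}(\mathbf{v}\cdot\hat{\mathbf e}_r))+\mathbf{v}\cdot\hat{\mathbf e}_\theta|^2$ pointwise in time via the test function from Remark~3.1, then handle the bulk, transitional annuli, and boundary layer separately with the building-block estimates of Lemma~3.2. The paper first replaces $g$ by $\sum_k\chi_k^2 g$ (picking up the $\delta_{\text{bl}}\langle|f|^2\rangle$ defect from $g$, not $F$), then proceeds exactly as you describe.

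However, you have misidentified the main obstacle. There is no cancellation to verify in step~(iii). The ``naive estimate'' you fear, namely a contribution of order $l_k^2/\delta_k$ from each transitional annulus, is precisely what is needed: the hypothesis $|l_{k+1}-l_k|\sim l_k$ in \eqref{eq:branching-analysis-assumptions-1} gives
\[
\frac{l_k^2}{\delta_k}\ \sim\ \frac{(l_{k+1}-l_k)^2}{\delta_k}\ =\ \int_{r_k}^{r_{k+1}}(\ell'(r))^2\,dr,
\]
so the naive estimate already produces the $\int(\ell')^2$ term. The paper simply bounds the $(\chi_k\chi_{k'})'$ contributions by $(l_k\vee l_{k'})^2/\delta_k^2$ pointwise, integrates over an interval of length $\delta_k$, and invokes this equivalence. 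Your proposed workarounds (a smarter $\varphi$, or the identity $\mathbf{u}\eta=\nabla^\perp(\psi\eta)-\psi\nabla^\perp\eta$) are unnecessary; once you drop them, your argument is the paper's.
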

\begin{proof}
Again we argue a.e.\ in time. Begin with the bound
\begin{align*}
Q(\mathbf{u}\eta-g\hat{\mathbf{e}}_{r}) & \lesssim Q(\mathbf{u}\eta-\sum_{k=1}^{n}\chi_{k}^{2}g\hat{\mathbf{e}}_{r})+\int_{D}|\left(\sum_{k=1}^{n}\chi_{k}^{2}-1\right)g|^{2}\\
 & \lesssim Q(\mathbf{u}\eta-\sum_{k=1}^{n}\chi_{k}^{2}g\hat{\mathbf{e}}_{r})+\delta_{\text{bl}}\int_{D}|f|^{2}
\end{align*}
where in the first step we applied the definition \prettyref{eq:quad-form-reminder}
of the quadratic form $Q$, and in the second step we used the formula
$g=\fint_{0}^{r}\rho f$ from \prettyref{eq:unsteady-Fandg} along
with Jensen's inequality and \prettyref{eq:Pythagorean-sum}. Calling
\[
\mathbf{v}=\mathbf{u}\eta-\sum_{k=1}^{n}\chi_{k}^{2}g\hat{\mathbf{e}}_{r}
\]
and using
\[
\varphi=r\partial_{\theta}^{-1}(\mathbf{v}\cdot\hat{\mathbf{e}}_{r})=r\int_{0}^{\theta}\mathbf{v}\cdot\hat{\mathbf{e}}_{r}(r,\theta')-\overline{\mathbf{v}\cdot\hat{\mathbf{e}}_{r}}(r)\,d\theta'
\]
as in \prettyref{rem:good-test-function}, we deduce that
\begin{align*}
Q(\mathbf{v}) & \leq\fint_{D}|\partial_{r}\left(r\partial_{\theta}^{-1}(\mathbf{v}\cdot\hat{\mathbf{e}}_{r})\right)+\mathbf{v}\cdot\hat{\mathbf{e}}_{\theta}|^{2}\\
 & \lesssim\fint_{D}|\partial_{\theta}^{-1}(\mathbf{v}\cdot\hat{\mathbf{e}}_{r})|^{2}+\fint_{D}r^{2}|\partial_{r}\partial_{\theta}^{-1}(\mathbf{v}\cdot\hat{\mathbf{e}}_{r})|^{2}+\fint_{D}|\mathbf{v}\cdot\hat{\mathbf{e}}_{\theta}|^{2}.
\end{align*}
We estimate these integrals one-by-one.

For the first, note using the definitions from \prettyref{subsec:Branching-flows}
and in particular \prettyref{eq:nearly-diagonal} that
\begin{align*}
\partial_{\theta}^{-1}(\mathbf{v}\cdot\hat{\mathbf{e}}_{r}) & =\partial_{\theta}^{-1}(\mathbf{u}\eta\cdot\hat{\mathbf{e}}_{r}-\sum_{k=1}^{n}\chi_{k}^{2}g)=\sum_{|k-k'|\leq1}\chi_{k}\chi_{k'}\partial_{\theta}^{-1}\left[\mathbf{u}_{k}\eta_{k'}\cdot\hat{\mathbf{e}}_{r}-\delta_{kk'}g\right]\\
 & =\sum_{|k-k'|\leq1}\chi_{k}\chi_{k'}\partial_{\theta}^{-1}\left[\partial_{\theta}g\Psi_{k}\Psi_{k'}'+g\left(\Psi_{k}'\Psi_{k'}'-\delta_{kk'}\right)\right]
\end{align*}
where $\delta_{kk'}$ is the Kronecker delta function (one if $k=k'$,
zero otherwise). Arguing as in the proof of \prettyref{lem:building-block}\textemdash see
the paragraph leading up to \prettyref{eq:final-Lebesgue1}\textemdash we
write that
\begin{align}
||\partial_{\theta}^{-1}\left[\partial_{\theta}g\Psi_{k}\Psi_{k'}'+g\left(\Psi_{k}'\Psi_{k'}'-\delta_{kk'}\right)\right]||_{L_{\theta}^{1}} & \lesssim(l_{k}\vee l_{k'})\left(||g||_{L_{\theta}^{1}}+||\partial_{\theta}g||_{L_{\theta}^{1}}\right)\label{eq:applied-Lebesgue1}\\
 & \lesssim(l_{k}\vee l_{k'})\left(\frac{1}{r}\int_{D_{r}}|f|+\int_{D_{r}}|\nabla f|\right)\nonumber 
\end{align}
where in the last step we used \prettyref{eq:integrated-estimates-on-g}
and \prettyref{eq:integrated-estimates-on-g-derivs}. Squaring and
integrating, there follows
\begin{align*}
\int_{D}|\partial_{\theta}^{-1}(\mathbf{v}\cdot\hat{\mathbf{e}}_{r})|^{2} & \lesssim\sum_{|k-k'|\leq1}\int_{0}^{1}|\chi_{k}\chi_{k'}|^{2}(l_{k}\vee l_{k'})^{2}\left(\int_{D_{r}}|f|^{2}+r^{2}\int_{D_{r}}|\nabla f|^{2}\right)\,rdr\\
 & \lesssim l_{\text{bulk}}^{2}\int_{D}|f|^{2}+|\nabla f|^{2}.
\end{align*}

Continuing with the second integral, write using the product rule
that
\begin{align*}
\partial_{r}\partial_{\theta}^{-1}(\mathbf{v}\cdot\hat{\mathbf{e}}_{r}) & =\sum_{|k-k'|\leq1}(\chi_{k}\chi_{k'})'\partial_{\theta}^{-1}\left[\partial_{\theta}g\Psi_{k}\Psi_{k'}'+g\left(\Psi_{k}'\Psi_{k'}'-\delta_{kk'}\right)\right]\\
 & \qquad+\sum_{|k-k'|\leq1}\chi_{k}\chi_{k'}\partial_{r}\partial_{\theta}^{-1}\left[\partial_{\theta}g\Psi_{k}\Psi_{k'}'+g\left(\Psi_{k}'\Psi_{k'}'-\delta_{kk'}\right)\right]
\end{align*}
and note the estimate
\begin{align}
||\partial_{r}\partial_{\theta}^{-1}\left[\partial_{\theta}g\Psi_{k}\Psi_{k'}'+g\left(\Psi_{k}'\Psi_{k'}'-\delta_{kk'}\right)\right]||_{L_{\theta}^{1}} & \lesssim(l_{k}\vee l_{k'})\left(||\partial_{r}g||_{L_{\theta}^{1}}+||\partial_{r\theta}g||_{L_{\theta}^{1}}\right)\label{eq:applied-Lebesgue2}\\
 & \lesssim(l_{k}\vee l_{k'})\left(\frac{1}{r^{2}}\int_{D_{r}}|f|+||f||_{L_{\theta}^{1}}+\frac{1}{r}\int_{D_{r}}|\nabla f|+r||\nabla f||_{L_{\theta}^{1}}\right)\nonumber 
\end{align}
holds in addition to \prettyref{eq:applied-Lebesgue1}. Its proof
is essentially the same, and again we point the reader to the paragraph
leading up to \prettyref{eq:final-Lebesgue2} for the details. Using
both \prettyref{eq:applied-Lebesgue1} and \prettyref{eq:applied-Lebesgue2},
we get that
\begin{align*}
&\int_{D}r^{2}|\partial_{r}\partial_{\theta}^{-1}(\mathbf{v}\cdot\hat{\mathbf{e}}_{r})|^{2} \\ 
&\quad \lesssim\sum_{|k-k'|\leq1}\int_{0}^{1}|(\chi_{k}\chi_{k'})'|^{2}(l_{k}\vee l_{k'})^{2}\left(r^{2}\int_{D_{r}}|f|^{2}+r^{4}\int_{D_{r}}|\nabla f|^{2}\right)\,rdr\\
&\quad\qquad+\sum_{|k-k'|\leq1}\int_{0}^{1}|\chi_{k}\chi_{k'}|^{2}(l_{k}\vee l_{k'})^{2}\left(\int_{D_{r}}|f|^{2}+r^{2}\int_{D_{r}}|\nabla f|^{2}+r^{2}||f||_{L_{\theta}^{2}}^{2}+r^{4}||\nabla f||_{L_{\theta}^{2}}^{2}\right)\,rdr\\
&\quad \lesssim\left(\int_{r_{\text{bulk}}}^{r_{\text{bl}}}(\ell'(r))^{2}\,dr+\frac{l_{\text{bl}}^{2}}{\delta_{\text{bl}}}+l_{\text{bulk}}^{2}\right)\int_{D}|f|^{2}+|\nabla f|^{2}.
\end{align*}
Note we used the definition of $\ell$ and our hypotheses that $|l_{k+1}-l_{k}|\sim l_{k+1}\sim l_{k}$
and $\delta_{k+1}\sim\delta_{k}$ to bring in $\ell'$.

Finally, since
\begin{align*}
\mathbf{v}\cdot\hat{\mathbf{e}}_{\theta} & =\mathbf{u}\eta\cdot\hat{\mathbf{e}}_{\theta}=\sum_{|k-k'|\leq1}(\chi_{k}'\psi_{k}+\chi_{k}\mathbf{u}_{k}\cdot\hat{\mathbf{e}}_{\theta})\chi_{k'}\eta_{k'}\\
 & =-\sum_{|k-k'|\leq1}\chi_{k}'\chi_{k'}rg\Psi_{k}\Psi_{k'}'+\chi_{k}\chi_{k'}rf\Psi_{k}\Psi_{k'}'
\end{align*}
we get by a completely analogous argument that 
\begin{align*}
\int_{D}|\mathbf{v}\cdot\hat{\mathbf{e}}_{\theta}|^{2} & \lesssim\sum_{|k-k'|\leq1}\int_{0}^{1}\left(|\chi_{k}'\chi_{k'}|^{2}l_{k}^{2}r^{2}||g||_{L_{\theta}^{2}}^{2}+|\chi_{k}\chi_{k'}|^{2}l_{k}^{2}r^{2}||f||_{L_{\theta}^{2}}^{2}\right)\,rdr\\
 & \lesssim\left(\int_{r_{\text{bulk}}}^{r_{\text{bl}}}(\ell'(r))^{2}\,dr+\frac{l_{\text{bl}}^{2}}{\delta_{\text{bl}}}+l_{\text{bulk}}^{2}\right)\int_{D}|f|^{2}.
\end{align*}
In the last step, we applied Jensen's inequality with $g=\fint_{0}^{r}\rho f$.
Adding up the estimates and averaging in time proves the result.
\end{proof}
Combining \prettyref{lem:mean-flux-avg} and \prettyref{lem:quad-form}
with our hypothesis that $l_{\text{bl}}\lesssim\delta_{\text{bl}}$
from \prettyref{eq:branching-analysis-assumptions-2} proves the first
part of the estimate in \prettyref{prop:crucial-estimate}.

\subsubsection{The enstrophy term\label{subsec:The-enstrophy-term}}

We now estimate the gradients of $\mathbf{u}$ and $\eta$. 
\begin{lem}
\label{lem:enstrophy-estimates}Under the assumptions in \prettyref{eq:branching-analysis-assumptions-1}
and \prettyref{eq:branching-analysis-assumptions-2}, the branching
flow\textendash test function pairs $(\mathbf{u},\eta)$ from \prettyref{subsec:Branching-flows}
satisfy
\begin{align*}
\left\langle |\nabla\mathbf{u}|^{2}\right\rangle  & \lesssim\left(\frac{1}{l_{\emph{bulk}}^{2}}+\int_{r_{\emph{bulk}}}^{r_{\emph{bl}}}\frac{1}{(\ell(r))^{2}}\,dr+\frac{\delta_{\emph{bl}}}{l_{\emph{bl}}^{2}}\right)\left\langle |f|^{2}+|\nabla f|^{2}+|\nabla\nabla f|^{2}\right\rangle ,\\
\left\langle |\nabla\eta|^{2}\right\rangle  & \lesssim\frac{1}{l_{\emph{bulk}}^{2}}+\int_{r_{\emph{bulk}}}^{r_{\emph{bl}}}\frac{1}{(\ell(r))^{2}}\,dr+\frac{\delta_{\emph{bl}}}{l_{\emph{bl}}^{2}}.
\end{align*}
The constants implicit in these estimates depend only on those in
\prettyref{eq:branching-analysis-assumptions-1} and \prettyref{eq:branching-analysis-assumptions-2}.
\end{lem}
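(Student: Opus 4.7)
The plan is to estimate both quantities by decomposing the branching flow/test function pair into its building blocks and using the pointwise estimates already established in \prettyref{lem:building-block}. Each $\mathbf{u}_k=\nabla^{\perp}\psi_k$ is a building block of the form studied in \prettyref{sec:Steady-branching-flows} with parameter $n=1/l_k$, so its gradient is controlled by \prettyref{eq:enstrophy-bd}; the geometric assumption $l_k\lesssim\delta_k$ from \prettyref{eq:branching-analysis-assumptions-2} ensures that every correction coming from differentiating a cutoff function $\chi_k$ is dominated by the intrinsic gradient of $\mathbf{u}_k$ or $\eta_k$. After summing over $k$, the prefactors $1/l_k^2$ combine with the widths $\delta_k$ to reproduce the piecewise-linear integral $\int_{r_{\emph{bulk}}}^{r_{\emph{bl}}}(\ell(r))^{-2}\,dr$ of \prettyref{eq:continuous-scale}; the bulk cutoff gives the $1/l_{\emph{bulk}}^2$ term and the boundary-layer cutoff gives the $\delta_{\emph{bl}}/l_{\emph{bl}}^2$ term.

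For the test function, since $\eta$ is steady I can work pointwise in time. Starting from $\partial_r\eta=\sum_k\chi_k'\eta_k$ and $r^{-1}\partial_\theta\eta=\sum_k(\chi_k/r)\eta_k'$, the bounds $|\eta_k|\lesssim 1$, $|\eta_k'|\lesssim 1/l_k$, and $|\chi_k'|\lesssim 1/\delta_k$ give $|\nabla\eta|\lesssim 1/\delta_k+1/(rl_k)$ on the support of $\chi_k$; the first piece is dominated by the second by $l_k\lesssim\delta_k$ on intervals bounded away from $r=0$. The only subtle point is the bulk cutoff $\chi_1$ near the origin, for which I use $\chi_1(0)=0$ together with smoothness to get $\chi_1(r)=O(r)$ and hence $\int_0^{r_2}\chi_1^2/r\,dr\lesssim 1$, yielding $\int|\chi_1\nabla\eta_1|^2\lesssim 1/l_{\emph{bulk}}^2$. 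The remaining integrals $\int_{r_{k-1}}^{r_{k+1}}\chi_k^2/r\,dr\lesssim\delta_k$ are trivial since $r$ is bounded below, the boundary-layer contribution is $\lesssim\delta_{\emph{bl}}/l_{\emph{bl}}^2$, and summing via the piecewise-linear interpolation $\ell(r)$ gives the claimed bound.

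For the velocity I would use the identity $\nabla^{\perp}(\chi_k\psi_k)=\chi_k\mathbf{u}_k+\chi_k'\psi_k\hat{\mathbf{e}}_\theta$ (since $\nabla\chi_k=\chi_k'\hat{\mathbf{e}}_r$) to write $\mathbf{u}=\sum_k\chi_k\mathbf{u}_k+\chi_k'\psi_k\hat{\mathbf{e}}_\theta$. Differentiating produces a main term $\chi_k\nabla\mathbf{u}_k$ together with correction terms bounded pointwise by $|\chi_k'||\mathbf{u}_k|$, $|\chi_k''||\psi_k|$, $|\chi_k'||\nabla\psi_k|$, and $|\chi_k'||\psi_k|/r$ (the last from $\nabla\hat{\mathbf{e}}_\theta$). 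Applying \prettyref{eq:enstrophy-bd} with $n=1/l_k$ to the main term and using Fubini to swap the $\rho$- and $r$-integrals on the support of $\chi_k$ yields a contribution of order $(\delta_k/l_k^2)(\|f\|_{L^2(D)}^2+\|\nabla f\|_{L^2(D)}^2)+l_k^2\delta_k\|\nabla\nabla f\|_{L^2(D)}^2$, the last piece being subsumed into the first since $l_k\leq 1$. The correction terms are handled using \prettyref{eq:velocity-bd} together with $|\chi_k'|\lesssim 1/\delta_k$ and $|\chi_k''|\lesssim 1/\delta_k^2$; the assumption $l_k\lesssim\delta_k$ ensures each is of order at most $(\delta_k/l_k^2)\cdot C_0(f)$. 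Summing over $k$ and time-averaging gives the stated bound.

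The main obstacle is bookkeeping rather than a single sharp estimate: one must carefully verify that every cutoff correction is dominated by the intrinsic $\chi_k\nabla\mathbf{u}_k$ piece using $l_k\lesssim\delta_k$, and carry out the Fubini swaps on the nested $\int_0^r\rho\cdots d\rho$ expressions without creating logarithmic factors from the $1/r$ and $1/r^2$ weights. The comparability $\delta_{k+1}\sim\delta_k$ and $l_{k+1}\sim l_k$ from \prettyref{eq:branching-analysis-assumptions-1} is what ultimately makes the Riemann-like sum $\sum_k\delta_k/l_k^2$ match $\int_{r_{\emph{bulk}}}^{r_{\emph{bl}}}(\ell(r))^{-2}\,dr$ up to a fixed constant depending only on the implicit constants in \prettyref{eq:branching-analysis-assumptions-1} and \prettyref{eq:branching-analysis-assumptions-2}.
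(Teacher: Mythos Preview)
Your approach is essentially the paper's: write $\mathbf{u}=\sum_k\chi_k\mathbf{u}_k+\chi_k'\psi_k\hat{\mathbf{e}}_\theta$, differentiate, apply the building-block bounds \prettyref{eq:velocity-bd}--\prettyref{eq:enstrophy-bd} and \prettyref{eq:eta-estimates}, absorb the cutoff corrections using $l_k\lesssim\delta_k$, and convert $\sum_k\delta_k/l_k^2$ into $\int_{r_{\text{bulk}}}^{r_{\text{bl}}}(\ell(r))^{-2}\,dr$ via the comparability in \prettyref{eq:branching-analysis-assumptions-1}. The paper organizes the computation by introducing the auxiliary averages $A_1,A_2,A_3$ and splitting $\int_D|\nabla\mathbf{u}|^2$ into three sums according to how many derivatives hit $\chi_k$, but this is bookkeeping rather than a different idea.

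One caveat worth flagging: your handling of the $r\to0$ singularity for $\eta$ assumes $\chi_1(0)=0$ so that $\chi_1(r)=O(r)$, but this is in tension with the Pythagorean condition \prettyref{eq:Pythagorean-sum}, which (since only $\chi_1$ is supported near the origin) forces $\chi_1\equiv1$ there and makes $\int_0^{r_{\text{bulk}}}\chi_1^2/(rl_1^2)\,dr$ diverge. The paper's proof glosses over the same point by saying ``the same manipulations apply as before,'' but the Fubini argument it used for $\nabla\mathbf{u}$ relied on the extra factor $\|A_1\|_{L^2_\theta}^2\lesssim\fint_0^r\rho^2\|f\|_{L^2_\theta}^2\,d\rho$, whose $\rho^2$ weight is what tames the $1/r^2$ singularity; no such factor is present for $\eta$. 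The clean fix is to take $\chi_1$ vanishing near $r=0$ (as the support condition $\text{supp}\,\chi_1\subset(0,r_2)$ literally demands) and relax \prettyref{eq:Pythagorean-sum} to hold only on some $(r_0,r_{\text{bl}})$ with $r_0>0$ fixed; the extra $O(r_0)$ error this introduces in \prettyref{lem:mean-flux-avg} and \prettyref{lem:quad-form} is harmless.
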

\begin{proof}
We start with the formulas
\begin{align*}
\mathbf{u} & =\sum_{k=1}^{n}\chi_{k}\mathbf{u}_{k}+\chi_{k}'\psi_{k}\hat{\mathbf{e}}_{\theta},\\
\nabla\mathbf{u} & =\sum_{k=1}^{n}\chi_{k}\nabla\mathbf{u}_{k}+\chi_{k}'(\mathbf{u}_{k}\otimes\hat{\mathbf{e}}_{r}-\hat{\mathbf{e}}_{\theta}\otimes\mathbf{u}_{k}^{\perp})+\chi_{k}''\psi_{k}\hat{\mathbf{e}}_{\theta}\otimes\hat{\mathbf{e}}_{r},
\end{align*}
which follow from the definitions in \prettyref{subsec:Branching-flows}
and the fact that $\mathbf{u}_{k}^{\perp}=-\nabla\psi_{k}$ as $(\cdot)^{\perp}$
is a counterclockwise rotation by $\pi/2$. Looking back at the estimates
in \prettyref{eq:velocity-bd} and \prettyref{eq:enstrophy-bd}, we
see that
\begin{align*}
|\nabla\mathbf{u}| & \lesssim\sum_{k=1}^{n}|\chi_{k}||\nabla\mathbf{u}_{k}|+|\chi_{k}'||\mathbf{u}_{k}|+|\chi_{k}''||\psi_{k}|\\
 & \lesssim\sum_{k=1}^{n}|\chi_{k}|\left(\frac{A_{1}}{rl_{k}}+\frac{A_{2}}{l_{k}}+rl_{k}A_{3}+|f|+r|\nabla f|\right)\\
 & \qquad+|\chi_{k}'|\left(A_{1}+rl_{k}A_{2}+rl_{k}|f|\right)+|\chi_{k}''|rl_{k}A_{1}
\end{align*}
where 
\[
A_{1}=\frac{1}{r}\int_{0}^{r}\rho|f|\,d\rho,\quad A_{2}=\frac{1}{r}\int_{0}^{r}\rho|\nabla f|\,d\rho,\quad A_{3}=\frac{1}{r}\int_{0}^{r}\rho|\nabla\nabla f|\,d\rho.
\]
Squaring and integrating, there follows
\begin{align*}
\int_{D}|\nabla\mathbf{u}|^{2} & \lesssim\sum_{k=1}^{n}\int_{0}^{1}|\chi_{k}|^{2}\left(\frac{1}{r^{2}l_{k}^{2}}||A_{1}||_{L_{\theta}^{2}}^{2}+\frac{1}{l_{k}^{2}}||A_{2}||_{L_{\theta}^{2}}^{2}+r^{2}l_{k}^{2}||A_{3}||_{L_{\theta}^{2}}^{2}+||f||_{L_{\theta}^{2}}^{2}+r^{2}||\nabla f||_{L_{\theta}^{2}}^{2}\right)\,rdr\\
 & \qquad+\sum_{k=1}^{n}\int_{0}^{1}|\chi_{k}'|^{2}\left(||A_{1}||_{L_{\theta}^{2}}^{2}+r^{2}l_{k}^{2}||A_{2}||_{L_{\theta}^{2}}^{2}+r^{2}l_{k}^{2}||f||_{L_{\theta}^{2}}^{2}\right)\,rdr+\sum_{k=1}^{n}\int_{0}^{1}|\chi_{k}''|^{2}r^{2}l_{k}^{2}||A_{1}||_{L_{\theta}^{2}}^{2}\,rdr\\
 & :=I_{1}+I_{2}+I_{3}.
\end{align*}
We bound these three sums in turn, using the estimates
\begin{align}
||A_{1}||_{L_{\theta}^{2}}^{2} & \leq\fint_{0}^{r}\rho^{2}||f||_{L_{\theta}^{2}}^{2}\,d\rho\lesssim\int_{D_{r}}|f|^{2}\,d\mathbf{x},\label{eq:estimate-on-A1}\\
||A_{2}||_{L_{\theta}^{2}}^{2} & \leq\fint_{0}^{r}\rho^{2}||\nabla f||_{L_{\theta}^{2}}^{2}\,d\rho\lesssim\int_{D_{r}}|\nabla f|^{2}\,d\mathbf{x},\label{eq:estimate-on-A2}\\
||A_{3}||_{L_{\theta}^{2}}^{2} & \leq\fint_{0}^{r}\rho^{2}||\nabla\nabla f||_{L_{\theta}^{2}}^{2}\,d\rho\lesssim\int_{D_{r}}|\nabla\nabla f|^{2}\,d\mathbf{x}.\label{eq:estimate-on-A3}
\end{align}
Note these follow from the definitions via Jensen's inequality.

The first sum $I_{1}$ involves the cutoff functions $\{\chi_{k}\}$
directly. Focusing on $r\in(r_{\text{bulk}},r_{\text{bl}})$ for now,
which are always larger than $1/2$ and no larger than $1$, we estimate
\begin{align*}
 & \sum_{k=1}^{n}\int_{r_{\text{bulk}}}^{r_{\text{bl}}}|\chi_{k}|^{2}\left(\frac{1}{r^{2}l_{k}^{2}}||A_{1}||_{L_{\theta}^{2}}^{2}+\frac{1}{l_{k}^{2}}||A_{2}||_{L_{\theta}^{2}}^{2}+r^{2}l_{k}^{2}||A_{3}||_{L_{\theta}^{2}}^{2}\right)\,rdr\\
 & \quad\lesssim\sum_{k=1}^{n}\int_{r_{\text{bulk}}}^{r_{\text{bl}}}\frac{|\chi_{k}|^{2}}{l_{k}^{2}}\left(||A_{1}||_{L_{\theta}^{2}}^{2}+||A_{2}||_{L_{\theta}^{2}}^{2}+||A_{3}||_{L_{\theta}^{2}}^{2}\right)\\
 & \quad\lesssim\int_{r_{\text{bulk}}}^{r_{\text{bl}}}\frac{1}{(\ell(r))^{2}}\,dr\cdot\int_{D}|f|^{2}+|\nabla f|^{2}+|\nabla\nabla f|^{2}
\end{align*}
by our definition of $\ell$ and the hypothesis that $l_{k+1}\sim l_{k}$.
For $r\in(0,r_{\text{bulk}})$, which belong only to the support of
$\chi_{1}$, we use Fubini's theorem and the first parts of \prettyref{eq:estimate-on-A1}-\prettyref{eq:estimate-on-A3}
to write that
\begin{align*}
 & \int_{0}^{r_{\text{bulk}}}|\chi_{1}|^{2}\left(\frac{1}{r^{2}l_{1}^{2}}||A_{1}||_{L_{\theta}^{2}}^{2}+\frac{1}{l_{1}^{2}}||A_{2}||_{L_{\theta}^{2}}^{2}+r^{2}l_{1}^{2}||A_{3}||_{L_{\theta}^{2}}^{2}\right)\,rdr\\
 & \quad\lesssim\int_{0\leq\rho\leq r\leq r_{\text{bulk}}}\left[\frac{\rho^{2}}{r^{2}l_{\text{bulk}}^{2}}||f(\rho,\cdot)||_{L_{\theta}^{2}}^{2}+\frac{\rho^{2}}{l_{\text{bulk}}^{2}}||\nabla f(\rho,\cdot)||_{L_{\theta}^{2}}^{2}+\frac{l_{\text{bulk}}^{2}\rho^{2}}{r^{2}}||\nabla\nabla f(\rho,\cdot)||_{L_{\theta}^{2}}^{2}\right]\,d\rho dr\\
 & \quad\lesssim\frac{1}{l_{\text{bulk}}^{2}}\int_{D}|f|^{2}+|\nabla f|^{2}+|\nabla\nabla f|^{2}
\end{align*}
similarly to what we did at the very end of \prettyref{sec:Steady-branching-flows}.
Finally, 
\[
\int_{r_{\text{bl}}}^{1}|\chi_{n}|^{2}\left(\frac{1}{r^{2}l_{n}^{2}}||A_{1}||_{L_{\theta}^{2}}^{2}+\frac{1}{l_{n}^{2}}||A_{2}||_{L_{\theta}^{2}}^{2}+r^{2}l_{n}^{2}||A_{3}||_{L_{\theta}^{2}}^{2}\right)\,rdr\lesssim\frac{\delta_{\text{bl}}}{l_{\text{bl}}^{2}}\int_{D}|f|^{2}+|\nabla f|^{2}+|\nabla\nabla f|^{2}.
\]
The bound
\[
\sum_{k=1}^{n}\int_{0}^{1}|\chi_{k}|^{2}\left(||f||_{L_{\theta}^{2}}^{2}+r^{2}||\nabla f||_{L_{\theta}^{2}}^{2}\right)\,rdr\lesssim\int_{D}|f|^{2}+|\nabla f|^{2}
\]
is clear. Altogether, 
\[
I_{1}\lesssim\left(\frac{1}{l_{\text{bulk}}^{2}}+\int_{r_{\text{bulk}}}^{r_{\text{bl}}}\frac{1}{(\ell(r))^{2}}\,dr+\frac{\delta_{\text{bl}}}{l_{\text{bl}}^{2}}\right)\cdot\int_{D}|f|^{2}+|\nabla f|^{2}+|\nabla\nabla f|^{2}.
\]

The second sum $I_{2}$ involves the derivatives $\{\chi_{k}'\}$.
This time, we only need to handle $r>r_{\text{bulk}}$. Using the
second parts of \prettyref{eq:estimate-on-A1} and \prettyref{eq:estimate-on-A2},
we get that
\begin{align*}
I_{2} & =\sum_{k=1}^{n}\int_{r_{\text{bulk}}}^{1}|\chi_{k}'|^{2}\left(||A_{1}||_{L_{\theta}^{2}}^{2}+r^{2}l_{k}^{2}||A_{2}||_{L_{\theta}^{2}}^{2}+r^{2}l_{k}^{2}||f||_{L_{\theta}^{2}}^{2}\right)\,rdr\\
 & \lesssim\left(\sum_{k=1}^{n-1}\int_{r_{k}}^{r_{k+1}}\frac{1}{(\delta_{k}\wedge\delta_{k+1})^{2}}+\int_{r_{\text{bl}}}^{1}\frac{1}{\delta_{\text{bl}}^{2}}\right)\cdot\int_{D}|f|^{2}+|\nabla f|^{2}\\
 & \lesssim\left(\int_{r_{\text{bulk}}}^{r_{\text{bl}}}\frac{1}{(\ell(r))^{2}}\,dr+\frac{\delta_{\text{bl}}}{l_{\text{bl}}^{2}}\right)\cdot\int_{D}|f|^{2}+|\nabla f|^{2}.
\end{align*}
The bounds
\[
\frac{1}{\delta_{k}\wedge\delta_{k+1}}\lesssim\frac{1}{\delta_{k+1}}\lesssim\frac{1}{l_{k+1}}\lesssim\frac{1}{\ell(r)}\quad\forall\,r\in(r_{k},r_{k+1})
\]
were used to pass between the second and third lines in the estimate
above. They hold by our assumptions that $\delta_{k}\sim\delta_{k+1}$,
$l_{k}\lesssim\delta_{k}$, and $l_{k}\sim l_{k+1}$ along with the
definition of $\ell$. 

Finally, we estimate $I_{3}$ which involves the second derivatives
$\{\chi_{k}''\}$. Again we use \prettyref{eq:estimate-on-A1} to
write that
\begin{align*}
I_{3} & =\sum_{k=1}^{n}\int_{r_{\text{bulk}}}^{1}|\chi_{k}''|^{2}r^{2}l_{k}^{2}||A_{1}||_{L_{\theta}^{2}}^{2}\,rdr\\
 & \lesssim\left(\sum_{k=1}^{n-1}\int_{r_{k}}^{r_{k+1}}\frac{(l_{k}\vee l_{k+1})^{2}}{(\delta_{k}\wedge\delta_{k+1})^{4}}+\int_{r_{\text{bl}}}^{1}\frac{l_{\text{bl}}^{2}}{\delta_{\text{bl}}^{4}}\right)\cdot\int_{D}|f|^{2}\\
 & \lesssim\left(\int_{r_{\text{bulk}}}^{r_{\text{bl}}}\frac{1}{(\ell(r))^{2}}\,dr+\frac{\delta_{\text{bl}}}{l_{\text{bl}}^{2}}\right)\cdot\int_{D}|f|^{2}.
\end{align*}
Adding up the estimates on $I_{1}$, $I_{2}$, and $I_{3}$ and averaging
in time gives the first part of the claim.

A similar, and much simpler, argument proves the desired bound on
$\nabla\eta$. In particular, by the definition of $\eta$ in \prettyref{subsec:Branching-flows}
and the estimates from \prettyref{eq:eta-estimates}, there holds
\[
|\nabla\eta|\leq\sum_{k=1}^{n}|\chi_{k}||\nabla\eta_{k}|+|\chi_{k}'||\eta_{k}|\lesssim\sum_{k=1}^{n}|\chi_{k}|\frac{1}{rl_{k}}+|\chi_{k}'|.
\]
Hence
\[
\int_{D}|\nabla\eta|^{2}\lesssim\sum_{k=1}^{n}\int_{0}^{1}\left(|\chi_{k}|^{2}\frac{1}{r^{2}l_{k}^{2}}+|\chi_{k}'|^{2}\right)\,rdr.
\]
We recognize terms like those from the bound on
$\nabla\mathbf{u}$, with the difference being that the terms involving
$f$ are now replaced by the number one. The same manipulations apply
as before.
\end{proof}
Combining \prettyref{lem:enstrophy-estimates} with what we proved
in \prettyref{subsec:The-advection-term} yields the rest of \prettyref{prop:crucial-estimate}. 

\subsection{Optimal branching flows\label{subsec:Optimal-branching-flows}}

Finally, we optimize over our branching flows to prove the upper bound
in \prettyref{thm:main-theorem}. Recall from \prettyref{prop:crucial-estimate}
that the (rescaled) velocities $\lambda_{\text{Pe}}\mathbf{u}$ and temperature fields $T_{\text{Pe}}$ achieve 
\begin{equation}
\left\langle |\nabla T_{\text{Pe}}|^{2}\right\rangle \lesssim C_{0}(f)\cdot M(\{l_{k}\}_{k=1}^{n},\{r_{k}\}_{k=1}^{n},n;\text{Pe})\quad\text{and}\quad\left\langle |\nabla(\lambda_{\text{Pe}}\mathbf{u})|^{2}\right\rangle =\text{Pe}^{2}\label{eq:upper-bound-on-temperature}
\end{equation}
where 
\begin{align*}
C_{0} & =\left\langle |f|^{2}+|\nabla f|^{2}+|\nabla\nabla f|^{2}\right\rangle ,\\
M & =l_{\text{bulk}}^{2}+\int_{r_{\text{bulk}}}^{r_{\text{bl}}}(\ell'(r))^{2}\,dr+\delta_{\text{bl}}+\frac{1}{\text{Pe}^{2}}\left[\frac{1}{l_{\text{bulk}}^{2}}+\int_{r_{\text{bulk}}}^{r_{\text{bl}}}\frac{1}{(\ell(r))^{2}}\,dr+\frac{\delta_{\text{bl}}}{l_{\text{bl}}^{2}}\right]^{2}.
\end{align*}
We find it useful to work directly with the continuous ``scale function''
$\ell(r)$ introduced in \prettyref{eq:continuous-scale}, and to
wait to enforce the interpolation rule
\begin{equation}
l_{k}=\ell(r_{k})\quad\text{for }k=1,\dots,n\label{eq:interpolation-rule}
\end{equation}
until it comes time to select the parameters $\{l_{k}\}_{k=1}^{n}$
and $\{r_{k}\}_{k=1}^{n}$.

Recall $l_{1}=l_{\text{bulk}}$, $l_{n}=l_{\text{bl}}$, $r_{1}=r_{\text{bulk}}$,
and $r_{n}=r_{\text{bl}}$, and consider the one-dimensional variational
problem
\[
\min_{\substack{\ell(r)\\
\ell(r_{\text{bulk}})=l_{\text{bulk}}\\
\ell(r_{\text{bl}})=l_{\text{bl}}
}
}\,\int_{r_{\text{bulk}}}^{r_{\text{bl}}}(\ell')^{2}\,dr+\frac{1}{\text{Pe}^{2}}\left(\int_{r_{\text{bulk}}}^{r_{\text{bl}}}\frac{1}{\ell^{2}}\,dr\right)^{2}
\]
suggested by minimizing $M$. Its solution obeys
\[
(\ell')^{2}\sim\frac{1}{\text{Pe}^{2}}\left(\int_{r_{\text{bulk}}}^{r_{\text{bl}}}\frac{1}{\ell^{2}}\,dr\right)\frac{1}{\ell^{2}}.
\]
Setting $\ell(1)=0$ and integrating yields
\begin{equation}
\ell(r)\sim\frac{1}{\text{Pe}^{1/2}}\left(\int_{r_{\text{bulk}}}^{r_{\text{bl}}}\frac{1}{\ell^{2}}\,dr\right)^{1/4}\sqrt{1-r}.\label{eq:optimal-balance}
\end{equation}
Put another way, 
\[
\ell(r)=c(\text{Pe})\sqrt{1-r}.
\]

The constant $c(\text{Pe})$ and the parameters $r_{\text{bulk}}$, $r_{\text{bl}}$,
$l_{\text{bulk}}$, and $l_{\text{bl}}$ must be determined. For $c$,
note that
\[
\int_{r_{\text{bulk}}}^{r_{\text{bl}}}\frac{1}{\ell^{2}}\,dr=\frac{1}{c^{2}}\int_{r_{\text{bulk}}}^{r_{\text{bl}}}\frac{dr}{1-r}=\frac{1}{c^{2}}\log\left(\frac{\delta_{\text{bulk}}}{\delta_{\text{bl}}}\right)
\]
where $\delta_{\text{bulk}}=1-r_{\text{bulk}}$ and $\delta_{\text{bl}}=1-r_{\text{bl}}$.
Setting this into \prettyref{eq:optimal-balance} and rearranging,
we get that
\[
c(\text{Pe})\sim\frac{1}{\text{Pe}^{1/3}}\log^{1/6}\left(\frac{\delta_{\text{bulk}}}{\delta_{\text{bl}}}\right).
\]
By \prettyref{eq:interpolation-rule},
\[
l_{\text{bulk}}=\ell(r_{\text{bulk}})\sim\frac{\delta_{\text{bulk}}^{1/2}}{\text{Pe}^{1/3}}\log^{1/6}\left(\frac{\delta_{\text{bulk}}}{\delta_{\text{bl}}}\right)\quad\text{and}\quad l_{\text{bl}}=\ell(r_{\text{bl}})\sim\frac{\delta_{\text{bl}}^{1/2}}{\text{Pe}^{1/3}}\log^{1/6}\left(\frac{\delta_{\text{bulk}}}{\delta_{\text{bl}}}\right).
\]

All that remains is to choose $\delta_{\text{bulk}}$ and $\delta_{\text{bl}}$.
Note the quantity $M$ from \prettyref{eq:upper-bound-on-temperature}
satisfies
\begin{align*}
M & \lesssim l_{\text{bulk}}^{2}+\frac{1}{\text{Pe}^{2}}\frac{1}{l_{\text{bulk}}^{4}}+\delta_{\text{bl}}+\frac{1}{\text{Pe}^{2}}\frac{\delta_{\text{bl}}^{2}}{l_{\text{bl}}^{4}}+\frac{1}{\text{Pe}^{2/3}}\log^{4/3}\left(\frac{\delta_{\text{bulk}}}{\delta_{\text{bl}}}\right)\\
 & \lesssim\delta_{\text{bl}}+\frac{1}{\text{Pe}^{2/3}}\left[\frac{1}{\delta_{\text{bulk}}^{2}\log^{2/3}\left(\frac{\delta_{\text{bulk}}}{\delta_{\text{bl}}}\right)}+\log^{4/3}\left(\frac{\delta_{\text{bulk}}}{\delta_{\text{bl}}}\right)\right].
\end{align*}
Minimizing with
\[
\delta_{\text{bulk}}\sim1\quad\text{and}\quad\delta_{\text{bl}}\sim\frac{\log^{1/3}\text{Pe}}{\text{Pe}^{2/3}}
\]
yields the desired bound
\begin{equation}
\left\langle |\nabla T_{\text{Pe}}|^{2}\right\rangle \leq C'(f)\cdot\frac{\log^{4/3}\text{Pe}}{\text{Pe}^{2/3}}\label{eq:desired-upper-bound}
\end{equation}
where $C'\lesssim\left\langle |f|^{2}+|\nabla f|^{2}+|\nabla\nabla f|^{2}\right\rangle $.
Regarding its scaling in $\text{Pe}$, this is the best upper bound achievable by a roll-based
branching flow. 

To complete the proof of \prettyref{thm:main-theorem},
we only need verify that our choices for $\{l_{k}\}_{k=1}^{n}$,
$\{r_{k}\}_{k=1}^{n}$, and $n$ are actually admissible in our analysis
of branching flows. We do this now.

\begin{proof}[Proof of the upper bound from \prettyref{thm:main-theorem}]
To be absolutely clear, we fix
\begin{equation}
\ell(r)=\frac{\log^{1/6}\text{Pe}}{\text{Pe}^{1/3}}\sqrt{1-r},\quad r\in(\frac{1}{2},1)\label{eq:definite-choice-for-lengths}
\end{equation}
for the remainder of the proof, and let $n\in\mathbb{N}$ satisfy 
\[
n\leq\log_{2}\frac{\text{Pe}^{1/3}}{\log^{1/6}\text{Pe}}\leq n+1.
\]
Implicit in this is the requirement that $2\leq \text{Pe}^{1/3}/\log^{1/6}\text{Pe}$.
Define the scales $\{l_{k}\}_{k=1}^{n}$ by taking
\[
l_{k}=\frac{l_{\text{bulk}}}{2^{k-1}}\quad\text{for }k=1,\dots,n
\]
where $l_{\text{bulk}}^{-1}\in\mathbb{N}$ obeys
\[
2\frac{\text{Pe}^{1/3}}{\log^{1/6}\text{Pe}}\leq\frac{1}{l_{\text{bulk}}}\leq4\frac{\text{Pe}^{1/3}}{\log^{1/6}\text{Pe}}.
\]
For the points $\{r_{k}\}_{k=1}^{n}$, enforce the interpolation rule
\[
l_{k}=\ell(r_{k})
\]
which says here that
\begin{equation}
\frac{l_{\text{bulk}}}{2^{k-1}}=\frac{\log^{1/6}\text{Pe}}{\text{Pe}^{1/3}}\sqrt{1-r_{k}}\quad\text{for }k=1,\dots,n.\label{eq:choice-for-points}
\end{equation}
At this point, all available choices have been made and we can go
ahead with our proof of the desired bound \prettyref{eq:desired-upper-bound}.
Actually, we already did most of the heavy lifting in the paragraphs
above, where we explained how these choices follow from optimizing
the result of \prettyref{prop:crucial-estimate}. All that remains
is to verify the hypotheses from \prettyref{subsec:Branching-flows}
and \prettyref{prop:crucial-estimate}. 

First, we check the assumptions of \prettyref{subsec:Branching-flows}.
Clearly, $l_{k}$ decreases with increasing $k$ per \prettyref{eq:lengths-decrease}.
And as $r\mapsto\ell(r)$ is strictly decreasing, the points $r_{k}$
increase with increasing $k$. Taking $k=1$ in \prettyref{eq:choice-for-points},
we get that
\[
1=\frac{1}{l_{\text{bulk}}}\frac{\log^{1/6}\text{Pe}}{\text{Pe}^{1/3}}\sqrt{1-r_{1}}\geq2\sqrt{1-r_{1}}
\]
by our choice of $l_{\text{bulk}}$. So, $r_{1}\geq3/4>1/2$ as per \prettyref{eq:points-increase}. We have shown that our choices for $\{l_{k}\}$,
$\{r_{k}\}$, and $n$ constitute a viable branching flow.

Next, we check the hypotheses of \prettyref{prop:crucial-estimate}.
Its first one \prettyref{eq:branching-analysis-assumptions-1} requires
that $|l_{k+1}-l_{k}|\sim l_{k+1}\sim l_{k}$ and $\delta_{k+1}\sim\delta_{k}$
with fixed numerical constants. The former holds by the dyadic nature
of $l_{k}$. For the latter, introduce the inverse $\ell\mapsto r(\ell)$,
which is strictly decreasing, and note that $\delta_{k}=r_{k+1}-r_{k}$
obeys
\[
\delta_{k}=\int_{l_{k+1}}^{l_{k}}|r'(\ell)|\,d\ell.
\]
It suffices to check that $|r'(\ell_{k})|\sim|r'(\ell_{k+1})|$. Differentiating
\prettyref{eq:definite-choice-for-lengths} implicitly and rearranging
gives 
\begin{equation}
r'(\ell)=-2\frac{\text{Pe}^{1/3}}{\log^{1/6}\text{Pe}}\sqrt{1-r}.\label{eq:implicit-differentiation}
\end{equation}
One sees from \prettyref{eq:choice-for-points} that $1-r_{k}\sim1-r_{k+1}$. Hence, \prettyref{eq:branching-analysis-assumptions-1}
is proved.

Finally, we verify the second hypothesis \prettyref{eq:branching-analysis-assumptions-2}
of \prettyref{prop:crucial-estimate}, which is that $l_{k}\lesssim\delta_{k}$
for each $k$. Since $\delta_{k}=r_{k+1}-r_{k}$ and $l_{k}\sim|l_{k+1}-l_{k}|$,
we must show that 
\[
1\lesssim\left|\frac{r_{k+1}-r_{k}}{l_{k+1}-l_{k}}\right|.
\]
Referring again to $r(\ell)$, it suffices to check that $1\lesssim|r'(\ell)|$
for $\ell\in(l_{1},l_{n})$. Evidently by \prettyref{eq:implicit-differentiation},
\[
|r'(\ell)|\gtrsim\frac{\text{Pe}^{1/3}}{\log^{1/6}\text{Pe}}\sqrt{1-r}\geq\frac{\text{Pe}^{1/3}}{\log^{1/6}\text{Pe}}\sqrt{1-r_{n}}\sim1.
\]
In the last step we used \prettyref{eq:choice-for-points} with $k=n$.
That $l_{\text{bl}}\sim\delta_{\text{bl}}$ is clear. \prettyref{thm:main-theorem}
is proved. \end{proof}

\section{Unsteady roll-like flows for energy-constrained cooling\label{sec:Unsteady-roll-type-flows-energy-constrained}}

We close by proving the upper bound on energy-constrained cooling
from the introduction, by estimating the mean-square temperature gradient of a well-chosen family of convection roll-like flows (see the middle row of \prettyref{fig:flows}). 
It is an open challenge to decide whether
this bound is sharp in its scaling with respect to $\text{Pe}$, in the advective
limit $\text{Pe}\to\infty$. 
In particular, we note the significant gap between the lower bound in \prettyref{prop:lower-bound-energy-constraint} and the upper bound achieved below.
\begin{prop}
\label{prop:upper-bound-energy-constraint}Let $f(\mathbf{x},t)$
satisfy 
\[
\lim_{\tau\to\infty}\,\frac{1}{\sqrt{\tau}}\int_{0}^{\tau}e^{-\lambda_{1}\left((\tau-t)\wedge t\right)}||f(\cdot,t)||_{L^{2}(D)}\,dt=0\quad\text{and}\quad\left\langle |f|^{2}+|\nabla f|^{2}\right\rangle <\infty.
\]
There exists a fixed, numerical constant $C'>0$ such that
\[
\min_{\substack{\mathbf{u}(\mathbf{x},t)\\
\left\langle |\mathbf{u}|^{2}\right\rangle =\emph{Pe}^{2}\\
\mathbf{u}=\mathbf{0}\text{ at }\partial D
}
}\,\left\langle |\nabla T|^{2}\right\rangle \leq\frac{C'}{\emph{Pe}}\cdot\left\langle |f|^{2}+|\nabla f|^{2}\right\rangle 
\]
whenever $\emph{Pe}\geq1$. The same bound holds using no-penetration conditions
$\mathbf{u}\cdot\hat{\mathbf{n}}=0$ in place of the no-slip ones
$\mathbf{u}=\mathbf{0}$ at $\partial D$.
\end{prop}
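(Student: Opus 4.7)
The plan is to mimic the strategy of \prettyref{sec:Unsteady-branching-flows} but with a single-scale (non-branching) convection roll-type flow tailored to the energy constraint. Applying the variational upper bound of \prettyref{prop:aprioribds} with a steady test function $\eta\in H_{0}^{1}(D)$, then rescaling $\mathbf{u}\mapsto\lambda_{\text{Pe}}\mathbf{u}$ and $\eta\mapsto\eta/\lambda_{\text{Pe}}$ with $\lambda_{\text{Pe}} = \text{Pe}/\sqrt{\langle |\mathbf{u}|^{2}\rangle}$ (so that $\lambda_{\text{Pe}}\mathbf{u}$ satisfies the energy constraint), yields
\[
\langle |\nabla T_{\text{Pe}}|^{2}\rangle \leq \frac{\langle |\mathbf{u}|^{2}\rangle}{\text{Pe}^{2}}\fint_{D}|\nabla\eta|^{2}\,d\mathbf{x} + \langle |\nabla\Delta^{-1}[\mathbf{u}\cdot\nabla\eta - f]|^{2}\rangle,
\]
where $T_{\text{Pe}}$ is the temperature field of $\lambda_{\text{Pe}}\mathbf{u}$. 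The prefactor of the first term is now $\langle|\mathbf{u}|^{2}\rangle/\text{Pe}^{2}$ rather than $\langle|\nabla\mathbf{u}|^{2}\rangle/\text{Pe}^{2}$, a weaker penalty that permits choosing $\mathbf{u}$ with much steeper gradients than the enstrophy-constrained setting.

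I take $(\mathbf{u},\eta)$ to be a single-wavenumber version of the building block of \prettyref{sec:Steady-branching-flows}: for $n\in\mathbb{N}$ and $\delta\in(0,1)$, let
\[
\mathbf{u}(\mathbf{x},t) = \nabla^{\perp}\bigl[\chi(r)\,r\,g(r,\theta,t)\,\Psi(\theta)\bigr],\qquad \eta(\mathbf{x}) = -\chi(r)\,\Psi'(\theta),
\]
with $g$ and $\Psi = \frac{\sqrt{2}}{n}\cos(n\theta)$ as in \prettyref{subsec:Approximate-solutions-steady-advection}, and $\chi\in C^{\infty}([0,1])$ a smooth cutoff that equals $1$ away from $r=0$ and $r=1$, vanishes to second order at $r=1$ (enforcing no-slip), and is tuned near the origin so that $\eta\in H_{0}^{1}(D)$. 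Applying \prettyref{lem:building-block} and straightforward single-scale adaptations of \prettyref{lem:mean-flux-avg}--\prettyref{lem:enstrophy-estimates} produces the three bounds $\langle|\mathbf{u}|^{2}\rangle\lesssim\langle|f|^{2}+|\nabla f|^{2}\rangle$, $\fint_{D}|\nabla\eta|^{2}\lesssim n^{2}$, and $\langle|\nabla\Delta^{-1}[\mathbf{u}\cdot\nabla\eta - f]|^{2}\rangle\lesssim (1/n^{2})\langle|f|^{2}+|\nabla f|^{2}\rangle$, modulo boundary-layer corrections. Substituting and optimizing at $n^{2}\sim\text{Pe}$ balances the two main terms at the target rate $\langle|f|^{2}+|\nabla f|^{2}\rangle/\text{Pe}$.

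The main obstacle lies in controlling the boundary-layer corrections produced by the cutoff $\chi$. On the one hand, the product $\chi'\cdot\psi_{0}$ arising in $\mathbf{u}\eta\cdot\hat{\mathbf{e}}_{\theta}$ (the analogue of the $l_{\text{bl}}^{2}/\delta_{\text{bl}}$ term in \prettyref{lem:quad-form}) contributes a cross-term of order $1/(n^{2}\delta)$ to the advection error; on the other, a generic radial cutoff creates a spurious $\log n$ factor in $\fint_{D}|\nabla\eta|^{2}$ through the integral of $n^{2}/r$ near the origin. Both obstructions can be addressed by a careful choice of $\chi$: arranging $\chi(r)\sim r$ in the inner region removes the $\log n$, while having $\chi$ vanish to sufficiently high order at $r=1$ (or equivalently, decoupling the cutoffs in $\mathbf{u}$ and $\eta$) suppresses the $1/(n^{2}\delta)$ cross-term. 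Together with $n^{2}\sim\text{Pe}$ and $\delta\sim 1/\text{Pe}$, this delivers the clean $\text{Pe}^{-1}$ rate without logarithmic corrections, matching the conjectured optimal scaling of \cite{marcotte2018optimal}. The extension to no-penetration is automatic since no-slip velocities are in particular no-penetration.
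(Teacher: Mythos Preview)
Your outline matches the paper's almost exactly: both take the single-scale ($n=1$) case of the branching construction from \prettyref{subsec:Branching-flows}, apply the upper bound of \prettyref{prop:aprioribds} under the energy rescaling $\lambda_{\text{Pe}}=\text{Pe}/\sqrt{\langle|\mathbf{u}|^{2}\rangle}$, observe that the unrescaled roll velocity satisfies $\langle|\mathbf{u}|^{2}\rangle\lesssim\langle|f|^{2}+|\nabla f|^{2}\rangle$ with an $O(1)$ geometric prefactor, and then optimize over the wavenumber $n=1/l_{\text{bulk}}$ and the boundary-layer width. The only structural difference is in the parameter regime: the paper keeps $\delta_{\text{bl}}\sim l_{\text{bulk}}\sim\text{Pe}^{-1/2}$, preserving the hypothesis $l_{\text{bulk}}\lesssim\delta_{\text{bl}}$ of \prettyref{prop:crucial-estimate} so that the estimates of \prettyref{subsec:branching-bounds} carry over verbatim; you instead attempt $\delta\sim\text{Pe}^{-1}\ll l_{\text{bulk}}$, which lies outside that framework and forces you to re-derive the boundary-layer contributions by hand.

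This is where your argument has a gap. Your claim that the $1/(n^{2}\delta)$ contribution to $Q(\mathbf{u}\eta-g\hat{\mathbf{e}}_{r})$ can be suppressed by letting $\chi$ vanish to higher order at $r=1$, or by decoupling the cutoffs on $\mathbf{u}$ and $\eta$, is not correct as stated: neither modification changes the basic scaling $\int_{1-\delta}^{1}|\chi'|^{2}\,dr\sim 1/\delta$, and the cross-term $\chi'\chi\,\psi_{0}\eta_{0}$ in $\mathbf{u}\eta\cdot\hat{\mathbf{e}}_{\theta}$ still contributes $\sim l^{2}/\delta$ to $Q$ after integration (cf.\ the proof of \prettyref{lem:quad-form}). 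With $l\sim\text{Pe}^{-1/2}$ and $\delta\sim\text{Pe}^{-1}$ this is $O(1)$, not $O(\text{Pe}^{-1})$. To justify a boundary layer thinner than the azimuthal scale you would need a genuinely sharper test function in $Q$ than the one from \prettyref{rem:good-test-function}, exploiting a cancellation you have not identified. The paper sidesteps the issue entirely by keeping $\delta_{\text{bl}}\sim l_{\text{bulk}}$. (Your remark about the origin is correct and your fix $\chi(r)\sim r$ is fine, but that point is tangential to the main scaling.)
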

\begin{proof}
Pure convection roll-like flows occur as the simplest case of our
branching flows from \prettyref{subsec:Branching-flows}, with $n=1$
and using only $l_{1}=l_{\text{bulk}}$ and $\delta_{1}=\delta_{\text{bl}}$.
The rescaled velocities 
\[
\lambda_{\text{Pe}}\mathbf{u}\quad\text{with}\quad\lambda_{\text{Pe}}=\frac{\text{Pe}}{\sqrt{\left\langle |\mathbf{u}|^{2}\right\rangle }}
\]
generate temperature fields $T_{\text{Pe}}$ satisfying 
\[
\left\langle |\nabla T_{\text{Pe}}|^{2}\right\rangle \lesssim C_{0}(f)\cdot \left[l_{\text{bulk}}^{2}+\delta_{\text{bl}}+\frac{1}{\text{Pe}^{2}}\left(\frac{1}{l_{\text{bulk}}^{2}}+\frac{\delta_{\text{bl}}}{l_{\text{bulk}}^{2}}\right)\right]
\]
so long as $l_{\text{bulk}}\lesssim\delta_{\text{bl}}$, where now
 
\[
C_{0}=\left\langle |f|^{2}+|\nabla f|^{2}\right\rangle .
\]
The proof of this is contained in that of \prettyref{prop:crucial-estimate},
once one notes that the un-scaled velocities from \prettyref{subsec:Branching-flows}
obey
\[
\left\langle |\mathbf{u}|^{2}\right\rangle \lesssim1.
\]
To see this write 
\[
\mathbf{u}=\chi_{1}\mathbf{u}_{1}+\chi_{1}'\psi_{1}\hat{\mathbf{e}}_{\theta}
\]
in the case $n=1$, and use the definitions to get that
\[
|\mathbf{u}|\lesssim1+\frac{l_{\text{bulk}}}{\delta_{\text{bl}}}\lesssim1.
\]
(The proof for $n>1$ is the same, but we leave it to the reader as
it is not needed here.) The dependence of $C_{0}$ on $f$ comes from
\prettyref{lem:mean-flux-avg} and \prettyref{lem:quad-form} and
not, in this case, from \prettyref{lem:enstrophy-estimates}. Optimizing
gives 
\[
\delta_{\text{bl}}\sim l_{\text{bulk}}\sim\frac{1}{\text{Pe}^{1/2}}
\]
and this proves the result.
\end{proof}

\bibliographystyle{amsplain}
\bibliography{fluidsrefs}

\end{document}